\newcommand{\delete}[1]{\empty}
\newcommand{\Sym}{{\mathrm{Sym}}}
\newcommand{\bfzero}{{\mathbf{0}}}
\newcommand{\bfE}{{\mathbf{E}}}
\newcommand{\id}{{\mathrm{id}}}
\newcommand{\Gr}{{\mathrm{Gr}\,}}
\newcommand{\rank}{{\mathrm{rank}\,}}
\newcommand{\bC}{{\mathbb C}}
\newcommand{\bN}{{\mathbb N}}
\newcommand{\bR}{{\mathbb R}}
\newcommand{\bZ}{{\mathbb Z}}
\newcommand{\rO}{{\mathrm{O}}}
\newcommand{\rU}{{\mathrm{U}}}
\newcommand{\pr}{{\mathrm{pr}}}
\newcommand{\Sp}{{\mathrm{Sp}}}
\newcommand{\GL}{{\mathrm{GL}}}
\newcommand{\SO}{{\mathrm{SO}}}
\newcommand{\Stab}{{\mathrm{Stab}}}
\newcommand{\frakg}{{\mathfrak{g}}}
\newcommand{\frakh}{{\mathfrak{h}}}
\newcommand{\frakm}{{\mathfrak{m}}}
\newcommand{\frakk}{{\mathfrak k}}
\newcommand{\fraksl}{{\mathfrak{sl}}}
\newcommand{\frakso}{{\mathfrak{so}}}
\newcommand{\fraksp}{{\mathfrak{sp}}}
\newcommand{\frakp}{{\mathfrak p}}
\newcommand{\frakq}{{\mathfrak q}}
\newcommand{\fsp}{{\mathfrak{sp}}}
\newcommand{\spl}{{\mathrm{Sp}}}
\newcommand{\Ind}{{\mathrm{Ind}}}
\newcommand{\wtU}{\widetilde{\mathrm{U}}}
\newcommand{\Hom}{{\mathrm{Hom}}}
\def\wtSp{\widetilde{\mathrm{Sp}}}
\newtheorem{thmA}{Theorem}
\newtheorem*{prop*}{Proposition}
\newtheorem{lemma}[subsection]{Lemma}
\newtheorem{prop}[subsection]{Proposition}
\newtheorem{thm}[subsection]{Theorem}
\newtheorem{cor}[subsection]{Corollary}
\theoremstyle{definition}
\newtheorem{definition}[subsection]{Definition}
\newcommand{\cD}{{\mathcal{D}}}
\newcommand{\cE}{{\mathcal{E}}}
\newcommand{\cR}{{\mathcal{R}}}
\newcommand{\cS}{{\mathcal{S}}}
\newcommand{\wtcH}{{\widetilde{\cH}}}
\newcommand{\calC}{{\mathcal{C}}}
\newcommand{\calH}{{\mathcal{H}}}
\newcommand{\calO}{{\mathcal{O}}}
\newcommand{\AC}{\mathrm{AC}}
\newcommand{\AV}{\mathrm{AV}}
\newcommand{\Spec}{\mathrm{Spec}}
\newcommand{\red}{\color{red}}
\newcommand{\blue}{\color{blue}}
\def\codim{{\mathrm{codim}}}
\def\wtE{\widetilde{E}}
\def\wtG{\widetilde{G}}
\def\wtK{\widetilde{K}}
\def\wtL{\widetilde{L}}
\def\wtrU{\widetilde{{\mathrm{U}}}}
\def\fgg{\mathfrak{g}}
\def\fkk{\mathfrak{k}}
\def\fmm{\mathfrak{m}}
\def\fpp{\mathfrak{p}}
\def\fss{\mathfrak{s}}
\def\fuu{\mathfrak{u}}
\def\fN{\mathfrak{N}}
\def\cH{\mathcal{H}}
\def\cI{\mathcal{I}}
\def\cN{\mathcal{N}}
\def\cO{\mathcal{O}}
\def\cS{\mathcal{S}}
\def\cU{\mathcal{U}}
\def\bcN{{\overline{\cN}}}
\def\bcO{{\overline{\cO}}}
\def\sA{{\mathscr{A}}}
\def\sB{{\mathscr{B}}}
\def\sO{{\mathscr{O}}}
\def\sL{{\mathscr{L}}}
\def\sN{{\mathscr{N}}}
\def\sY{{\mathscr{Y}}}
\def\Ann{{\mathrm{Ann}}}
\def\Supp{{\mathrm{Supp}\,}}
\def\diag{\triangle}
\long\def\delete#1{}
\def\notshow#1{}
\def\cM{{\mathcal{M}}}
\def\bcNp{{\overline{\cN^+}}}
\def\bcNn{{\overline{\cN^-}}}
\def\bcOp{{\overline{\cO'}}}
\def\bcNs{{\overline{\cN_s}}}
\def\bfG{H}
\def\tchi{{\widetilde{\chi}}}
\def\bcOpd{\overline{\cO_d'}}
\def\sspan{{\mathrm{span}}}
\begin{document}

\subjclass{22E46, 22E47}

\title[Associated cycles]{Associated cycles of local theta lifts of
  unitary characters and unitary lowest weight modules}

\author{Hung Yean Loke}

\author{Jia-jun Ma}

\address{Department of Mathematics,
National University of Singapore,
2 Science Drive 2, Singapore 117543}

\email{matlhy@nus.edu.sg, jiajunma@nus.edu.sg}

\author{U-Liang Tang}

\address{School of Mathematics and Science,
Singapore Polytechnic, 500 Dover Road, Singapore 139651}

\email{TANG\underline{ }U\underline{ }LIANG@SP.EDU.SG}

\def\red{\color{red} }
\def\blue{\color{blue} }

\begin{abstract}
  In this paper we first construct natural filtrations on the full
  theta lifts for any real reductive dual pairs.  We will use these
  filtrations to calculate the associated cycles and therefore the
  associated varieties of Harish-Chandra modules of the indefinite
  orthogonal groups which are theta lifts of unitary lowest weight
  modules of the metaplectic double covers of the real symplectic
  groups. We will show that some of these representations are special
  unipotent and satisfy a $K$-type formula in a conjecture of Vogan.
\end{abstract}

\keywords{local theta lifts, nilpotent orbits, associated varieties,
  associated cycles, special unipotent representations.}

\maketitle

\section{Introduction} \label{sec:intro}

In this paper, we first study natural filtrations on the full theta
lifts for any real reductive dual pairs.  We will use these
filtrations to calculate the associated cycles and therefore the
associated varieties of Harish-Chandra modules of the indefinite
orthogonal groups which are theta lifts of unitary lowest weight
modules of the metaplectic double covers of the real symplectic
groups. We will show that some of these representations are special
unipotent and satisfy a $K$-type formula in a conjecture of Vogan in
\cite{VoAss}.

\subsection{}\label{S11}
Let $W_\bR$ be a real symplectic space and $\wtSp(W_\bR)$ be the
metaplectic double cover of the symplectic group $\Sp(W_\bR)$. For
every subgroup $E$ of $\Sp(W_\bR)$, we let $\wtE$ denote its inverse
image in $\wtSp(W_\bR)$.  Let $(G,G')$ be a real reductive dual pair
in $\Sp(W_\bR)$. We fix a maximal Cartan subgroup $\rU$ of
$\Sp(W_\bR)$ such that $K = G\cap\rU$ and $K' = G' \cap \rU$ are
maximal compact subgroups of $G$ and $G'$ respectively.

Let $\fgg = \fkk \oplus \fpp$ and $\fgg' = \fkk' \oplus \fpp'$ denote
the complexified Cartan decompositions of the complex Lie algebras of
$G$ and $G'$ respectively. Let $\rho'$ be an irreducible
$(\fgg',\wtK')$-module. We will recall the definition of its full
theta lift $\Theta(\rho')$ in \eqref{eqHowequotient}. We suppose
that $\Theta(\rho')$ is nonzero. It is a $(\fgg,\wtK)$-module of
finite length.

Let $\fN_{K_\bC}(\fpp^*)$ be the set of nilpotent $K_\bC$-orbits
in~$\fpp^*$. Similarly let $\fN_{K_\bC'}(\fpp'^*)$ be the set of
nilpotent $K_\bC'$-orbits in~$\fpp'^*$.  We will review the definition
of associated cycle $\AC(\rho')$ of the Harish-Chandra module $\rho'$
in Section \ref{sec:def}. This is a formal nonnegative integral sum of
nilpotent $K_\bC'$-orbits in $\fpp'^*$.  We will also recall the
definition of theta lift of nilpotent orbits $\theta :
\fN_{K_\bC'}(\fpp'^*) \rightarrow \fN_{K_\bC}(\fpp^*)$ in Section
\ref{S14}. This in turn defines $\theta(\AC(\rho'))$.

This paper is motivated by \cite{P}, \cite{Y} and \cite{NZ}. Our hope
is to prove that for dual pair $(G,G')$ we have the identity
\begin{equation} \label{eqnn13} \AC(\Theta(\rho')) =
  \theta(\AC(\rho'^*))
\end{equation}
for any irreducible Harish-Chandra module $\rho'$ of $G'$ such that
$\Theta(\rho')$ is nonzero.  This identity is certainly false in
general. In a recent paper \cite{LM}, the first two authors prove that
\eqref{eqnn13} holds for a type~I reductive dual pair $(G,G')$ in
stable range where $G'$ is the smaller member.  The proof uses a
natural filtration on $\Theta(\rho')$ and assumes some of its
properties.  The first objective of this paper is to provide the
proofs of these properties. See Section \ref{sec:fil}.

The second objective of this paper is to provide evidence that the
identity \eqref{eqnn13} extends beyond the stable range. We will work
with the dual pair $(G,G') = (\rO(p,q), \Sp(2n,\bR))$. Using a more
detail analysis of the geometry of the null cones and moment maps, we
are able to prove in this paper that for certain range outside the
stable range, \eqref{eqnn13} continues to hold if~$\rho'$ is a unitary
lowest weight module. See Theorem \ref{thm:LL}. This extends and gives
a shorter proof to a main result of \cite{NZ}.  We can also compute
the associated cycles of certain $\Theta(\rho')$ when \eqref{eqnn13}
fails. See Theorem \ref{TC}.

Although we only work with the orthogonal symplectic dual pairs, most
of our results extends to the dual pairs $(\rU(p,q), \rU(n_1,n_2))$
and $(\Sp(2p,2q), \rO^*(2n))$ too. See Section \ref{S7}. We hope that
our investigation would shed light on how to extend \eqref{eqnn13} in
general beyond the stable range.

\medskip

For the rest of this section, we will describe and state our main theorems.

\subsection{Associated varieties and associated cycles}\label{sec:def}
First we briefly review the definitions of associated varieties,
associated cycles and other related invariants of a $(\fgg,
K)$-module. See Section~2 in \cite{VoAss} for details.

Let $\varrho$ be a $(\fgg,K)$-module of finite length and let $0
\subset F_0 \subset \cdots\subset F_j \subset F_{j+1} \subset \cdots$
be a good filtration of $\varrho$.  Then $\Gr\varrho = \bigoplus
F_{j}/F_{j-1}$ is a finitely generated $(\cS(\fpp),K_\bC)$-module where
$\cS(\fpp)$ is the symmetric algebra on $\fpp$ and $K_\bC$ is a
complexification of $K$.

Let $\sA$ be the associated $K_\bC$-equivariant coherent sheaf of $\Gr
\varrho$ on $\fpp^* = \Spec(\cS(\fpp))$.  The {\it associated variety}
of $\varrho$ is defined to be $\AV(\varrho): = \Supp(\sA)$ in
$\fpp^*$.  Its dimension is called the {\it Gelfand-Kirillov
  dimension} of $\varrho$.  Let $N(\fpp^*) := \Set{x \in \fpp^*|0\in
  \overline{K_\bC \cdot x}}$ be the nilpotent cone in
$\fpp^*$. Alternatively, we may identify $\fpp^* \simeq \fpp$ using
the Killing form and $N(\fpp^*)$ is defined as the subset of $\fpp^*$
which corresponds to the set of nilpotent elements $N(\fpp)$
in~$\fpp$. It is well known that $\AV(\varrho)$ is a closed
$K_\bC$-invariant subset of $N(\fpp^*)$.

Let $\AV(\varrho) = \bigcup_{j=1}^r \overline{\cO_j}$ such that
$\cO_j$ are distinct open $K_\bC$-orbits in $\AV(\varrho)$.  By
Lemma~2.11 in~\cite{VoAss}, there is a finite
$(\cS(\fpp),K_\bC)$-invariant filtration $0\subset \sA_0 \subset
\cdots \subset \sA_l \subset \cdots \subset \sA_n = \sA$ of $\sA$ such
that $\sA_l/\sA_{l-1}$ is generically reduced on each
$\overline{\cO_j}$.  For a closed point $x_j\in \cO_j$, let
$i_{x_j}\colon \{ x_j \} \hookrightarrow \cO_j$ be the natural
inclusion map and let $K_{x_j} = \Stab_{K_\bC}(x_j)$ be the stabilizer
of $x_j$ in $K_\bC$.  Now
\[
\chi_{x_j}:= \textstyle\bigoplus_l (i_{x_j})^* (\sA_l/\sA_{l-1})
\]
is a finite dimensional rational representation of $K_{x_j}$.  We call
$\chi_{x_j}$ an {\it isotropy representation} of $\varrho$ at
$x_j$. Its image $[\chi_{x_j}]$ in the Grothendieck group of finite
dimensional rational $K_x$-modules is called the {\it isotropy
  character} of $\varrho$ at $x_j$.  The isotropy representation
depends on the filtration.  On the other hand the isotropy character
is an invariant, i.e.  it is independent of the filtration.

We define the {\it multiplicity of $\varrho$ at $\cO_j$} to be
 \[
m(\cO_j,\varrho) =
\dim_\bC\chi_{x_j}\]
 and the {\it associated cycle} of $\varrho$ to be
\[
\AC(\varrho) = \sum_{j=1}^r m(\cO_j,\varrho) [\overline{\cO_j}].
\]

In \Cref{sec:fil}, we will study the filtrations of local theta lifts
generated by the joint harmonics. 

\subsection{Local theta correspondence}\label{sec:def.theta}
Let $(G,G')$ be a real reductive dual pair in $\Sp(W_\bR)$.  We
recall some basic facts of theta correspondences.  Let $\fsp(W_\bR)
\otimes \bC$ denote the complex Lie algebra of $\Sp(W_\bR)$ and 
let $\sY$ be the Fock model (i.e. $(\fsp(W_\bR) \otimes
\bC, \wtrU)$-module)) of the oscillator representation. Let $\rho'$ be
a genuine $(\fgg', \wtK')$-module.  By~\cite{H2},
\begin{equation} \label{eqHowequotient}
\sY / (\textstyle \bigcap_{T \in \Hom_{\fgg,\wtK}(\sY, \rho')} \ker T)
\simeq \rho' \otimes \Theta(\rho')
\end{equation}
where $\Theta(\rho')$ is a $(\fgg,\wtK)$-module called the {\it full
  (local) theta lift} of~$\rho'$. Theorem~2.1 in~\cite{H2} states that
if $\Theta(\rho') \neq 0$, then $\Theta(\rho')$ is a
$(\fgg,\wtK)$-module of finite length with an infinitesimal
character and it has an unique irreducible quotient~$\theta(\rho')$
called the {\it (local) theta lift} of $\rho'$.  We set $\theta(\rho')
= 0$ if $\Theta(\rho') = 0$. Let $\cR(\fgg',\wtK';\sY)$ denote the set
of irreducible $(\frakg',\wtK')$-modules such that $\Theta(\rho')\neq
0$. Then $\rho' \mapsto \theta(\rho')$ is bijection from
$\cR(\fgg',\wtK';\sY)$ to $\cR(\fgg,\wtK;\sY)$.  Similarly, we could
define the theta lifting from $\cR(\fgg, \wtK ; \sY)$ to $\cR(\fgg',
\wtK'; \sY)$.

\subsection{Theta lifts of orbits and cycles} \label{S14}
We recall the complexified Cartan decompositions $\fgg = \fkk \oplus
\fpp$ and $\fgg' = \fkk' \oplus \fpp'$.  There are two moment maps
\begin{equation} \label{eq2} \xymatrix{ \fpp'^* & \ar[l]_{\psi} W
    \ar[r]^{\phi} & \fpp^*.} 
\end{equation}
We also recall the set of nilpotent elements $N(\fpp^*)$ in
$\fpp^*$ and the set of nilpotent $K_\bC$-orbits $\fN_{K_\bC}(\fpp^*)$
in~$\fpp^*$. Similarly we have $N(\fpp'^*)$ and $\fN_{K'_\bC}(\fpp'^*)$.

\begin{definition} 
\begin{asparaenum}[(i)]
\item For a $K_\bC'$-invariant closed subset $S'$ of~$\frakp'^*$,
  we define {\it the theta lift of $S'$} to be
\[
\theta(S') = \theta(S';G',G) := \phi(\psi^{-1} (S')).
\]
It is a $K_\bC$-invariant closed subset of $\fpp^*$.  If $S' \subseteq
N(\fpp'^*)$, then $\theta(S') \subseteq N(\fpp^*)$.

\item If $S' = \overline{\cO'} \subset N(\fpp'^*)$ is the closure of a $K_\bC'$-orbit
  $\cO'$  and $\theta(S') = \overline{\cO}$ is the closure of a
  $K_\bC$-orbit~$\cO$, then we denote $\cO$ by $\theta(\cO') =
  \theta(\cO'; G',G)$.

  Conversely for a closed $K_\bC$-invariant subset $S$ of $N(\fpp^*)$,
  we define $\theta(S) = \theta(S;G,G') := \psi(\phi^{-1} (S))$ which
  is a closed $K_\bC'$-invariant subset of $N(\fpp'^*)$.  When
  $\theta(\bcO) = \overline{\cO'}$, we define $\theta(\cO) = \cO'$.

\item We extend theta lifts of nilpotent orbits to cycles linearly.
  More precisely, we define $\theta(\sum_{j} m_j[\overline{\cO'_j}]) =
  \sum_{j} m_j[\overline{\theta(\cO'_j)}]$ if every $\cO'_j$ admits a
  theta lift.
\end{asparaenum}
\end{definition}

\subsection{} \label{S15} From Section \ref{S2} onwards, we specialize
to the dual pair $(G^{p,q},G') = (\rO(p,q), \spl(2n,\bR))$ in
$\Sp(W_\bR) = \Sp(2n(p+q),\bR)$. Choosing a maximal compact subgroup
$\rU$ as in \Cref{S11}, we set $K^{p,q} = G^{p,q} \cap \rU
\cong \rO(p) \times \rO(q)$ and $K'^{n}= G' \cap \rU \cong \rU(n)$ to
be the maximal compact subgroups of $G$ and $G'$ respectively. We also
denote $\rO(p)$ by $K^p$.

\medskip

A Harish-Chandra module of $\wtG$ is called {\it genuine} if it does
not descend to a Harish-Chandra module of~$G$. We will introduce some
genuine Harish-Chandra modules in this paper.
\begin{asparaenum}[(a)]
\item $\theta^{p,q}(\sigma')$ : Let $\sigma'$ be the genuine one
  dimensional character of $\wtG'=\wtSp(2n,\bR)$ such that
  $\sigma'|_{\fgg'}$ is trivial.  It exists if and only if $\wtG'$ is
  a split double cover of $G'$, i.e. $p+q$ is even. We say that the
  dual pair $(G^{p,q}, G')$ is under the {\it stable range} if:
\begin{equation}
\label{eq:range.C}
\min(p,q) \geq 2n \quad \mbox{ and } \quad \max(p,q) > 2n.
\end{equation}
Let $\theta^{p,q}(\sigma') := \theta(\sigma')$. Then, in stable range,
it is a nonzero unitarizable genuine Harish-Chandra module of
$\wtG^{p,q}$ (c.f. \cite{Lo,ZH}).
\item $L(\mu')$: \label{item:def.L} Let $\mu$ be a genuine
  $\wtG^{t,0}$-module.  Let $L(\mu'):=\theta(\mu)$ be the
  $(\fgg',\wtK')$-module, which is the theta lift of $\mu$.  It is
  well known that $L(\mu')$ is a unitary lowest module and it is also
  the full theta lift of $\mu$. Here $\mu'$ denotes the lowest
  $\wtK'$-type.

\item $\theta^{p,q}(L(\mu'))$: We suppose that $p+q+t$ is an even
  integer. Then the double cover $\wtG'$ for the dual pairs
  $(G^{p,q},G')$ and $(G^{t,0},G')$ are isomorphic.  Let
  $\theta^{p,q}(L(\mu'))$ be the $(\fgg,\wtK^{p,q})$-module which is
  the theta lift of the $L(\mu')$.
\end{asparaenum}





\subsection{}
\label{sec:lift.char} 
Before we state our main results, we have to describe the theta lifts
of certain orbits.  Since all groups appearring here are classical, we
will use signed Young diagram to parametrize nilpotent $K_\bC$-orbits
in $\fN_{K_\bC}(\fpp^*)$. More precisely, let $\fgg_0$ denote the real
Lie algebra of a classical group $G$ and let $\fN_{G}(\fgg_0)$ denote
the set of nilpotent $G$-orbits in~$\fgg_0$.  Then $\fN_{G}(\fgg_0)$
is parametrized by signed Young diagrams or signed partitions(c.f. Section~9.3
\cite{CM}). As before we identify $\fpp^* \simeq \fpp$ using the
Killng form. Then the Kostant-Sekiguchi correspondence identifies
$\fN_{G}(\fgg_0)$ with the set of nilpotent $K_\bC$-orbits
$\fN_{K_\bC}(\fpp) \simeq
\fN_{K_\bC}(\fpp^*)$. (c.f. Theorem~9.5.1~\cite{CM}).

\begin{asparaenum}[(a)]
\item $\cO'_d$: We consider the compact dual pair $G^{0,t} \times
  G'$. Let $0$ denote the zero orbit of $\fpp = 0$.  Let $\cO'_d
  :=\theta(0; G^{0,t}, G') $ where $d = \min\set{t,n}$.  Here
  $\cO'_d$ only depends on $d$. Indeed let $\fgg' = \frakk' \oplus
  \frakp'^+ \oplus \frakp'^-$ denote the complexified Cartan
  decomposition for the Lie algebra of the Hermitian symmetric group
  $G'$.  Then $\cO'_d$ is the $K'_\bC$-orbit in $\fpp^-$ generated
  by a sum of $d$ strongly orthogonal non-compact long roots in
  $\fpp^-$.  In terms of partitions, we have
\[
\calO_d' = 2_-^d 1_+^{n-d} 1_-^{n-d}.
\]


\item $\cO_{p,q,t}$: \label{item:O.b} Suppose $2n \leq
  \min\set{p,q+t}$. Let $d = \min(t, n)$.  By \cite{Ohta} or
  \cite{DKP}, $\theta(\overline{\cO_d'}; G',G^{p,q})$ is the closure
  of a single $K_\bC$-orbit $\cO_{p,q,t}$. In terms of partitions,
\begin{equation} \label{eqP16}
\cO_{p,q,t} = \theta(\cO_d';G',G^{p,q}) = \theta(\theta(0)) = 
  3_+^{d} 2_{+}^{n-d} 2_-^{n-d} 1_{+}^{p-2n}1_{-}^{q+d-2n}.
\end{equation}
Note that the Young diagram of $\cO_{p,q,t}$ is obtained by adding a
column to the left of the Young diagram of $\cO_d'$.

\item $\cO_{p,q}$: Suppose $t=0$ in the definition of $\cO_{p,q,t}$, we set
\begin{equation} \label{eqn7}
\cO_{p,q} := \cO_{p,q,0} = \theta(0';G',G^{p,q}) = 2_{+}^{n} 2_-^{n}
1_{+}^{p-2n}1_{-}^{q-2n}.
\end{equation}
\end{asparaenum}
In \cite{CM}, the above orbits are denoted by $\cO'_{d} = 2_{-}^d
1_{+}^{2n-2d}$ and $\cO_{p,q,t} = 3_+^{d} 2_{+}^{2n-2d}
1_{+}^{p-2n}1_{-}^{q+t+d-2n}$.


\subsection{Theta lifts of unitary characters}
We first recover the following theorem which is known to the experts.

\begin{thmA}\label{thm:ACchar}

\begin{asparaenum}[(i)]
  Let $(G,G') = (G^{p,q},G') = (\rO(p,q), \spl(2n,\bR))$ be the dual
  pair in the stable range defined by \eqref{eq:range.C} and $p+q$ is
  even integer. Let $\sigma'$ be the genuine unitary character
  of~$\wtG'$. Let $\cO_{p,q} = \theta(0'; G', G^{p,q})= 2_{+}^n
  2_-^n 1_{+}^{p-2n}1_{-}^{q-2n}$ as in \eqref{eqn7}.
\item   Then 
$
\AV(\theta(\sigma')) = \overline{\cO_{p,q}}$ and $\AC(\theta(\sigma'))
= 1 \, [\overline{\cO_{p,q}}]$.
\item Let $x \in \cO_{p,q}$ and let $K_x = \Stab_{K_\bC}(x)$ be the
  stabilizer of $x$ in $K_\bC$. Then
\[
K_x \simeq \Set{ (p_1,p_2) \in P_{p,n} \times P_{q,n} | \beta_{p,n}(p_1) =
  \beta_{q,n}(p_2) }.
\]
Here $P_{p,n} \cong (\GL(n,\bC) \times \rO(p-2n,\bC)) \ltimes N$
denote the maximal parabolic subgroup of $K_\bC^p$ which stabilizes an
$n$-dimensional isotropic subspace of $\bC^p$ and $\beta_{p,n} :
P_{p,n} \rightarrow \GL(n,\bC)$ denotes the quotient map.

\item Let $\chi_x \colon K_x \rightarrow \bC$ denote the character
  $(p_1 ,p_2) \mapsto \det(\beta_{p,n}(p_1))^{\frac{p-q}{2}}$. Then
  the isotropy representation of $\theta(\rho)$ at $x$ is $\tchi_x =
  \varsigma|_{\wtK}\otimes \chi_{x}$ where $\varsigma$ is the
    minimal $\rU$-type of the oscillator representation $\sY$
    (see \Cref{S24}).  Moreover, we have
\begin{equation}\label{eq:Yang}
\theta(\sigma')|_{\wtK} = \varsigma|_{\wtK} \otimes\Ind_{K_x}^{K_\bC}
\chi_x = \Ind_{\wtK_x}^{\wtK_\bC} \tchi_x.
\end{equation}

\end{asparaenum}
\end{thmA}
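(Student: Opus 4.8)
The plan is to reduce everything to a concrete model of $\Theta(\sigma')$ and its $\wtK$-structure via the theory of joint harmonics. In the stable range, $\theta(\sigma')$ is irreducible and unitary, so $\Theta(\sigma') = \theta(\sigma')$; moreover by the results recalled in \Cref{sec:fil} the natural filtration of $\Theta(\sigma')$ generated by the joint harmonics is good, and $\Gr\Theta(\sigma')$ is, up to the twist by the minimal $\rU$-type $\varsigma$, the pushforward along $\phi$ of the structure sheaf of $\psi^{-1}(0)$ — equivalently, the coordinate ring of $\theta(\{0\};G',G^{p,q})$, which is $\overline{\cO_{p,q}}$ by the Ohta/Daszkiewicz--Kra\'skiewicz--Przebinda computation cited in \Cref{item:O.b}. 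Since $\sigma'$ is the trivial $(\fgg',\wtK')$-module, $\Gr\Theta(\sigma')$ is generically reduced on $\cO_{p,q}$, which immediately gives $\AV(\theta(\sigma')) = \overline{\cO_{p,q}}$ and multiplicity $m(\cO_{p,q},\theta(\sigma')) = 1$. That handles (i).

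For (ii), I would compute the stabilizer $K_x$ directly. Realize $\fpp^* = \fpp \cong \Hom(\bC^p,\bC^q)\oplus\Hom(\bC^q,\bC^p)$ with $K_\bC^{p,q} = \rO(p,\bC)\times\rO(q,\bC)$ acting in the obvious way, and choose the Kostant--Sekiguchi representative $x$ of $\cO_{p,q}$: a pair $(A,B)$ with $A\colon\bC^p\to\bC^q$, $B\colon\bC^q\to\bC^p$ of rank $n$ each, with $AB$ and $BA$ nilpotent of the shape dictated by the partition $2^n 1^{p-2n}$ (resp. $2^n 1^{q-2n}$). Its image in $\bC^q$ is an $n$-dimensional isotropic subspace (because the Jordan type is $2^n\cdots$, the relevant bilinear form vanishes there), and likewise on the $\bC^p$ side; an element of $K_\bC^p$ fixing $x$ must preserve the isotropic flag $0\subset \im B\cap\ker A\subset\cdots$, so it lies in $P_{p,n}$, and similarly on the $\bC^q$ side. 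Tracking how $A$ intertwines the $\GL(n,\bC)$-quotients of $P_{p,n}$ and $P_{q,n}$ shows the two must agree, giving the fibre-product description; the unipotent radicals and the $\rO(p-2n,\bC)\times\rO(q-2n,\bC)$ factors act freely on $x$ modulo that constraint. This is the step I expect to be the most delicate: keeping the two orthogonal forms, the isotropy conditions, and the matching of the Levi quotients all consistent requires care, and it is where a sign or a parity ($p-q$ even) subtlety could bite.

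For (iii)--(iv), the isotropy representation of $\theta(\sigma')$ at $x$ is the fibre at $x$ of the $K_\bC$-equivariant sheaf $\Gr\Theta(\sigma')$, which by the joint-harmonics description is $\varsigma|_{\wtK}$ tensored with the fibre at $x$ of $\phi_*\mathcal{O}_{\psi^{-1}(0)}$; one checks that this fibre is one-dimensional (consistent with multiplicity $1$) and that $K_x$ acts on it through the determinant-power character $\chi_x$, by evaluating the ambient oscillator-representation action on the lowest $\rU$-type against the explicit $x$ — the power $\frac{p-q}{2}$ appears as the difference of the two half-integral shifts in the dual-pair normalization (this is exactly why one needs $p+q$, hence $p-q$, even). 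Finally, since $\Gr\Theta(\sigma')$ is generically reduced of rank one on the single orbit $\cO_{p,q}$ and is $K_\bC$-equivariant, it is the pushforward of a line bundle from $\cO_{p,q}\cong K_\bC/K_x$; taking global sections of that line bundle is, by the Frobenius description of equivariant sections, exactly $\Ind_{K_x}^{K_\bC}\chi_x$ (and its metaplectic twist $\Ind_{\wtK_x}^{\wtK_\bC}\tchi_x$ after tensoring with $\varsigma$). Comparing with $\Gr\Theta(\sigma') = \theta(\sigma')|_{\wtK}$ as $\wtK$-modules yields \eqref{eq:Yang}. The one point to verify carefully is that the filtration has no higher graded pieces supported on smaller orbits — but that follows because $\AV(\theta(\sigma'))$ is irreducible, so there are no boundary components to contribute.
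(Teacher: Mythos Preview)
Your overall strategy is broadly aligned with the paper's, but there is a genuine gap in the argument for \eqref{eq:Yang}, and your route to the isotropy character in (iii) is vaguer than what the paper actually does.

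For the isotropy character, the paper does not ``evaluate the oscillator action on the lowest $\rU$-type against the explicit $x$.'' Instead it uses a structural fact about the moment-map geometry: the fibre $F_x = \phi^{-1}(x)\cap\bcN$ is a single \emph{free} $K'_\bC$-orbit, so for each $k\in K_x$ there is a unique $\alpha(k)\in K'_\bC$ with $(k,\alpha(k))\cdot w = w$. This gives a homomorphism $\alpha\colon K_x\twoheadrightarrow K'_\bC$, and since $B=(\bC[\bcN]\otimes A)^{K'_\bC}$ with $A=\varsigma|_{\wtK'}\otimes\sigma'^*\cong\det^{(p-q)/2}$ as a $K'_\bC\cong\GL(n,\bC)$-character, one gets $i_x^*\sB = (\bC[F_x]\otimes A)^{K'_\bC}\cong A\circ\alpha$ directly. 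The character $\chi_x$ is thus the pullback of $\det^{(p-q)/2}$ along $\alpha$, which also explains the description of $K_x$ in (ii) as the fibre product over $\GL(n,\bC)$. Your proposed direct computation would presumably reach the same answer, but the paper's argument is cleaner and avoids coordinate-chasing.

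The real gap is in your last paragraph. You assert that because $\Gr\Theta(\sigma')$ is generically reduced of rank one on the irreducible variety $\bcO_{p,q}$, it must be the pushforward of a line bundle from the open orbit $\cO_{p,q}$, and hence its global sections agree with $\Ind_{K_x}^{K_\bC}\chi_x$. Irreducibility of $\AV(\theta(\sigma'))$ does \emph{not} imply this. What you need is that the restriction map $\sB(\bcO)\to\sB(\cO)$ is an isomorphism; a priori the module $B$ could fail to surject onto all sections of the line bundle over $\cO$, even though $\sB$ has no other open orbits in its support. The paper handles this by proving separately (\Cref{prop:iso.cN}) that the inclusion $\bC[\bcN]\hookrightarrow\bC[\cN]$ induces $(\bC[\bcN]\otimes A)^{K'_\bC}\cong(\bC[\cN]\otimes A)^{K'_\bC}$; since $\cN=\phi^{-1}(\cO)$, the right-hand side is exactly $\sB(\cO)$, and this gives $B=\sB(\bcO)=\sB(\cO)$. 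When $\min(p,q)>2n$ this follows from normality of $\bcN$ and $\codim\,\partial\cN\geq 2$, but in the boundary case $\min(p,q)=2n$ it requires a $K$-type comparison (attributed to~\cite{Y}). Your argument as written does not supply this step.
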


Part (i) is a result of \cite{NZ}.  If $n = 2p$, then it is
  also a result of \cite{T}. Equation~\eqref{eq:Yang} is part of Yang's thesis \cite{Y}.  

  Our contribution is to recognize and construct a natural
  $K$-equivariant invertible sheaf on $\calO_{p,q}$.  We add that such
  a sheaf is implicit in~\cite{NZ}. The merit of this is that we could
  now bypass the $K$-types or Hilbert polynomials computations in the
  previous work alluded above and prove (i) and (iii) more
  conceptually and efficiently. We remark that Yamashita has used the
  concept of coherent sheaves to compute the associated cycles of
  unitary lowest weight modules \cite{Ya}.

  Most of \Cref{thm:ACchar} is a special case of \cite{LM}.  One main
  reason for introducing theta lifts of unitary characters is that
  $\theta^{p,q}(L(\mu'))$ could be obtained by taking covariants of
  $\theta^{p,q+t}(\sigma')$ (see \Cref{P24}).  This will allow us to
  compute the associated cycles and associated varieties of
  $\theta^{p,q}(L(\mu'))$.

\subsection{}
We assume that $(G^{p,q+t},G')$ is in the stable range and $p+q+t$
is an even integer. Equivalently we have
\begin{equation} \label{eq16}
p+q+t \mbox{ is even},  \quad 
\min(p,q+t) \geq 2n \quad \mbox{and} \quad \max(p,q+t) > 2n.
\end{equation}
Let $L(\mu') = \theta(\mu)$ denote the unitary
lowest module of $\wtG'$ which is the theta lift of $\mu \in
\cR(\wtG^t; \sY_2)$ as in \Cref{S15}. We denote the dual
  Harish-Chandra module of $L(\mu)$ by $L(\mu)^*$. By \cite{Ya}, $\AV(L(\mu')^*) =
\overline{\cO'_d} = \theta(0,G^{0,t},G')$ where $d = \min \Set{ t, n }$. 

In order to describe the associated cycles of $\theta^{p,q}(L(\mu'))$,
we have to divide into two cases: When $q \geq n$, we will denote this
as Case I. When $q < n$, we will denote this as Case II.  The
geometries of the moment maps in these two cases differ significantly.

\subsubsection{} First we describe the main result for Case I.

\begin{thmA} \label{thm:LL} Suppose that $(G^{p,q+t},G')$ is in the
  stable range satisfying \eqref{eq16} and $q \geq n$.  Let $L(\mu') =
  \theta(\mu)$ be a non-zero irreducible unitarizable lowest module of
  $\wtG'$.

\begin{enumerate}[(i)]
\item The theta lift $\theta^{p,q}(L(\mu'))$ is nonzero. 

\item Let $d = \min\set{t,n}$. Then 
\[
\AV(\theta^{p,q}(L(\mu'))) = \overline{\cO_{p,q,t}} =
\theta(\overline{\cO'_d}) = \theta(\AV(L(\mu')^*)).
\]

\item We fix a closed point $x\in \cO_{p,q,t}$ and a closed point
  $x'\in \cO'_d$. Let $\tchi_{x'}$ be the
  isotropy representation of $L(\mu')^*$ at $x'\in \cO'_d$ (see
  \Cref{sec:AC.L}). Then there is a map $\beta\colon K_{x}\to K'_{x'}$
  such that the isotropy representation of $\Gr\theta(L(\mu'))$ at
  $x$ is
\[
\tchi_{x} = \varsigma|_{\wtK} \otimes(\varsigma|_{\wtK'}\otimes
\tchi_{x'})\circ \beta. 
\]

\item We have
\begin{equation} \label{eq:AC.I}
\AC(\theta(L(\mu'))) = (\dim\tchi_x) [\overline{\cO_{p,q,t}}] = (\dim\tchi_{x'})
[\overline{\theta(\cO_d)}] =  \theta(\AC(L(\mu')^*)).
\end{equation}
\end{enumerate}
\end{thmA}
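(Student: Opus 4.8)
\textbf{Proof proposal for Theorem \ref{thm:LL}.}
The plan is to deduce everything from \Cref{thm:ACchar} by realizing $\theta^{p,q}(L(\mu'))$ as a space of covariants of $\theta^{p,q+t}(\sigma')$, as advertised in the remarks after \Cref{thm:ACchar} and in \Cref{P24}. Concretely, since $(G^{p,q+t},G')$ is in the stable range and $p+q+t$ is even, $\theta^{p,q+t}(\sigma')$ is a nonzero unitarizable $(\fgg^{p,q+t},\wtK^{p,q+t})$-module whose associated cycle is $1\cdot[\overline{\cO_{p,q+t}}]$, with the explicit $K_x$ and isotropy character from parts (ii)--(iv) of \Cref{thm:ACchar}. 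The see-saw pair relating $(G^{p,q+t},G')$ to $(G^{p,q}\times G^{0,t},G')$, together with the fact that $L(\mu')$ is the full theta lift of $\mu\in\cR(\wtG^t;\sY_2)$, expresses $\theta^{p,q}(L(\mu'))$ as the $\mu$-covariants (equivalently, a suitable Hom or tensor over the $(\fgg^{0,t},\wtK^{0,t})$-action) of $\theta^{p,q+t}(\sigma')$. First I would make this identification precise at the level of $(\fgg,\wtK)$-modules, invoking \Cref{P24}; this also gives (i) since the relevant covariant space is visibly nonzero when $\theta^{p,q+t}(\sigma')\neq 0$.

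Next I would pass to associated graded objects and geometry. The good filtration on $\theta^{p,q+t}(\sigma')$ induces one on the covariant module, and $\Gr$ of the covariants is computed by the corresponding geometric covariants of the coherent sheaf $\sA$ on $\overline{\cO_{p,q+t}}\subset N(\fpp^{p,q+t,*})$. Here the key geometric input is the structure of the moment maps in \eqref{eq2} for the three dual pairs involved, and the compatibility of the Kostant--Sekiguchi/Ohta--Daszkiewicz--Kraśkiewicz--Przebinda descriptions of $\theta$ on orbits: one must check that restricting along the $G^{0,t}$-direction and taking the fibre over the null cone carries $\overline{\cO_{p,q+t}}$ to $\overline{\cO_{p,q,t}}$, and that this identifies the generic fibres of the two moment maps. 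This yields (ii), $\AV(\theta^{p,q}(L(\mu'))) = \overline{\cO_{p,q,t}} = \theta(\overline{\cO'_d})$, once one knows (from \cite{Ya}) that $\AV(L(\mu')^*)=\overline{\cO'_d}$; the hypothesis $q\geq n$ (Case I) is exactly what keeps this geometry clean, since then $\overline{\cO_{p,q,t}}$ is still obtained by the column-adding recipe of \eqref{eqP16} and $\psi^{-1}$ of a generic point meets $\phi^{-1}$ of a generic point transversally.

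Then I would compute the isotropy representation at a generic point $x\in\cO_{p,q,t}$. Fixing compatible generic points over the moment-map diagram relating $x$, a point of $\cO_{p,q+t}$, and $x'\in\cO'_d$, one gets a homomorphism $\beta\colon K_x\to K'_{x'}$ from the identification of stabilizers (this is the parabolic-subgroup bookkeeping already present in \Cref{thm:ACchar}(ii), now carried through the extra $G^{0,t}$-factor). Base-changing the sheaf $\sA$ along $i_x$ and using \Cref{thm:ACchar}(iv), which identifies the fibre with $\varsigma|_{\wtK^{p,q+t}}\otimes\chi_x$, the covariant operation turns this into $\varsigma|_{\wtK}\otimes(\varsigma|_{\wtK'}\otimes\tchi_{x'})\circ\beta$, giving (iii). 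For (iv) I would take dimensions: $m(\cO_{p,q,t},\theta^{p,q}(L(\mu'))) = \dim\tchi_x = \dim\tchi_{x'} = m(\cO'_d, L(\mu')^*)$, the middle equality because $\beta$ and the oscillator factors contribute one-dimensional twists, so that $\AC(\theta^{p,q}(L(\mu'))) = (\dim\tchi_{x'})[\overline{\cO_{p,q,t}}] = \theta(\AC(L(\mu')^*))$, matching \eqref{eq:AC.I}.

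The main obstacle I anticipate is step two: proving that the geometric covariant of the coherent sheaf $\sA$ on $\overline{\cO_{p,q+t}}$ is again a sheaf supported on all of $\overline{\cO_{p,q,t}}$ and generically reduced there, i.e. that no multiplicity is lost or gained when restricting through the null cone of the $G^{0,t}$-direction. This requires controlling the fibres of the moment maps $\psi,\phi$ over generic points—in particular verifying they are irreducible of the expected dimension and that the relevant scheme-theoretic intersection is generically reduced—and is precisely where the Case I hypothesis $q\geq n$ and the detailed null-cone analysis (to be developed in \Cref{S2} and later) are needed. The $K$-type bookkeeping in (iii), while intricate, should then be a formal consequence of \Cref{thm:ACchar} plus the explicit parabolic descriptions of the stabilizers.
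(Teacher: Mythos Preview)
Your overall strategy matches the paper's: realize $\theta^{p,q}(L(\mu'))$ as $\mu$-covariants of $\theta^{p,q+t}(\sigma')$ via \Cref{P24}, pass to graded modules (this is \Cref{lem:res.c}), and then analyze the fibre $Z$ over a generic $x\in\cO_{p,q,t}$ via the moment-map geometry to extract the isotropy representation. The paper makes this precise through \Cref{lem:geo.I}, which shows that $Y=\pr(Z)$ is a single $K_x\times K'_\bC$-orbit with stabilizer the graph of a homomorphism $\beta\colon K_x\to K'_{x'}$, and that the fibre $Z_y$ is identified with $Z_{x'}=\psi_2^{-1}(x')$; induction in stages then yields \eqref{eq:chix.I}. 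Your anticipated ``main obstacle'' is exactly what this lemma handles, and your sketch of it is accurate.

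There is, however, a genuine gap in your argument for (i). You assert that the covariant space $(\theta^{p,q+t}(\sigma')\otimes\mu)^{K^t}$ is ``visibly nonzero'' once $\theta^{p,q+t}(\sigma')\neq 0$. This is not visible at all: taking $K^t$-covariants of a given type can certainly kill a nonzero module, and whether $\mu^*$ actually occurs in $\theta^{p,q+t}(\sigma')|_{\wtK^t}$ is precisely the question. The paper does \emph{not} prove (i) first; it proves (iii) first, and then deduces (i) from the nonvanishing of the isotropy representation: since $L(\mu')\neq 0$, \Cref{T52} gives $\tchi_{x'}\neq 0$, hence $\tchi_x\neq 0$ by the formula in (iii), hence $\Gr\theta^{p,q}(L(\mu'))\neq 0$, hence $\theta^{p,q}(L(\mu'))\neq 0$. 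You should reorder your argument accordingly: establish (iii) first, and let (i), (ii), (iv) fall out of it.
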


The proof of the above theorem is given in \Cref{S57}. Part (iv) extends the
main result in~\cite{NZ} where $p,q > 2n$ is considered.

\subsubsection{} Next we describe the main result for Case II. 

\begin{thmA} \label{TC} Suppose that $(G^{p,q+t},G')$ is in the
  stable range satisfying \eqref{eq16} and $q < n$.  Let $L(\mu') =
  \theta(\mu)$ be an irreducible unitarizable lowest module of
  $\wtG'$. We fix a closed point $x\in \cO_{p,q,t}$ and a closed
  point $x'\in \cO'_d$ where $d = \min\set{t,n}$.

\begin{enumerate}[(i)]
\item  Let $L$ be the Levi part of a Levi decomposition of $K_{x}$, then $L
  \cong K_\bC^{q}\times K_\bC^{p-2q}$.

\item Let $\tchi_x$ be the isotropy
  representation of $\theta^{p,q}(L(\mu')^*)$ at $x \in
  \cO_{p,q,t}$. Then
\[
  \tchi_{x}|_{\wtL} = 
  (\theta^{p-2q,t-q}(\sigma')|_{\wtK_\bC^{p-2q}\times \wtK_\bC^{t-q}}\otimes
  \mu|_{\wtK_\bC^{t-q}\times \wtK_\bC^{q}})^{K_\bC^{t-q}} \otimes \varsigma^2|_{K_\bC^q}.
\]

\item The lift $\theta^{p,q}(L(\mu'))$ is nonzero if and only if
  $\tchi_x$ is nonzero.

\item If $\theta^{p,q}(L(\mu'))\neq 0$, then
\[
  \AV(\theta^{p,q}(L(\mu'))) = \overline{\cO_{p,q,t}} =
  \theta(\overline{\cO'_d}) = \theta(\AV(L(\mu')^*))
\]
and
\[
\AC(\theta^{p,q}(L(\mu'))) = (\dim \tchi_{x}) [\bcO].
\]
\end{enumerate}
\end{thmA}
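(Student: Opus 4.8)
The plan is to reduce Theorem~\ref{TC} to the structure already established for theta lifts of unitary characters in the stable range, by exhibiting $\theta^{p,q}(L(\mu'))$ as a space of covariants of a larger, better-understood lift. The starting point is the observation already flagged in \Cref{S15}: there is a see-saw pair relating $(G^{p,q},G')$ with $(G^{t,0},G')$ and $(G^{p,q+t},G')$, so that $\theta^{p,q}(L(\mu'))$ can be obtained from $\theta^{p,q+t}(\sigma')$ by taking $\wtK_\bC^{t}$-covariants against $\mu$ (this is \Cref{P24}). First I would set up this covariant presentation at the level of the natural filtrations constructed in \Cref{sec:fil}, so that the associated graded of $\theta^{p,q}(L(\mu'))$ is computed from the associated graded of $\theta^{p,q+t}(\sigma')$, whose structure is given by \Cref{thm:ACchar}(iii)--(iv): an invertible $K$-equivariant sheaf on $\overline{\cO_{p,q+t}}$, supported as $\Ind_{\wtK_x}^{\wtK_\bC}\tchi_x$.

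The geometric heart of Case~II is the behaviour of the moment maps when $q<n$. Here the orbit $\cO_{p,q,t} = 3_+^d 2_+^{n-d}2_-^{n-d}1_+^{p-2n}1_-^{q+d-2n}$ has a stabilizer $K_x$ that is no longer reductive; part~(i) asserts that its Levi part is $K_\bC^q\times K_\bC^{p-2q}$, with a nontrivial unipotent radical. I would prove (i) by an explicit normal-form computation: choose a representative $x$ of $\cO_{p,q,t}$ in $\fpp^*$ (via Kostant--Sekiguchi, working with a signed-partition model of the $\frakso(p,q)$-nilpotent), decompose $\bC^p\oplus\bC^q$ into Jordan strings, and read off the stabilizer inside $\rO(p,\bC)\times\rO(q,\bC)$. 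The key point is that $q<n$ forces the $q$ minus-signed columns of length $2$ (resp.\ $1$) to interact only with part of $\bC^p$, leaving a complementary $\rO(p-2q,\bC)$ factor acting on a space where the restricted nilpotent is the $\cO_{p-2q,t-q,?}$-type orbit; the bookkeeping of which boxes pair with which is the delicate combinatorial step.

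With (i) in hand, (ii) follows by restricting the sheaf-theoretic description: the fibre of the associated graded sheaf at $x$, as a $\wtK_x$-module, is $\tchi_x$, and restricting to the Levi $\wtL\cong\wtK_\bC^q\times\wtK_\bC^{p-2q}$ should decompose according to the see-saw — the $\wtK_\bC^{p-2q}\times\wtK_\bC^{t-q}$ part carries $\theta^{p-2q,t-q}(\sigma')$, the overlap with $\mu$ contributes $\mu|_{\wtK_\bC^{t-q}\times\wtK_\bC^q}$, one takes $K_\bC^{t-q}$-invariants to account for the covariant functor, and the residual $\varsigma^2|_{K_\bC^q}$ is the oscillator contribution from the $q$ "new" directions. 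Concretely I would compare the fibre of the covariant sheaf at $x$ with the fibre of the sheaf for $\theta^{p,q+t}(\sigma')$ at a point $\tilde x$ lying over $x$ under $\phi\circ\psi^{-1}$, using that $K_{\tilde x}$ surjects onto $K_x$ up to the unipotent radical, and that taking $\mu$-covariants commutes with restriction to the fibre. Then (iii) is immediate: since $\theta$ preserves the bijection $\cR(\fgg',\wtK';\sY)\leftrightarrow\cR(\fgg,\wtK;\sY)$ and nonvanishing of $\theta^{p,q}(L(\mu'))$ is detected by nonvanishing of its associated graded, hence of the fibre $\tchi_x$, nonvanishing is equivalent to $\tchi_x\neq 0$. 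Finally (iv): once we know $\theta^{p,q}(L(\mu'))\neq 0$, its associated variety is contained in $\theta(\AV(L(\mu')^*))=\overline{\cO_{p,q,t}}$ by the filtration construction of \Cref{sec:fil}, it is nonzero, and since $\cO_{p,q,t}$ is the unique open orbit (by \cite{Ohta}/\cite{DKP}) we get equality of associated varieties; the multiplicity is then the length of the fibre, i.e.\ $\dim\tchi_x$, giving $\AC(\theta^{p,q}(L(\mu')))=(\dim\tchi_x)[\bcO]$.

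The main obstacle I anticipate is part~(ii): precisely matching the non-reductive stabilizer's representation theory to the see-saw-predicted formula. Two subtleties need care. First, because $K_x$ is not reductive, the fibre $\tchi_x$ of the graded sheaf need not be determined by its restriction to the Levi $L$ alone, so one must check that the unipotent radical $N$ of $K_x$ acts in a controlled way — ideally trivially on the associated graded, or at least that $\tchi_x|_{\wtL}$ captures the full information; this likely requires that the sheaf descend from a sheaf on a partial flag variety where $K_\bC/K_x$ fibres over $K_\bC/P$ with $P\supseteq K_x$. Second, the identification of the covariant $(\cdots)^{K_\bC^{t-q}}$ with the right fibre requires knowing that taking $\mu$-isotypic covariants is exact on the relevant graded pieces — this is where the results of \Cref{sec:fil} on flatness/Cohen--Macaulayness of the filtration, together with the invertibility of the sheaf on $\overline{\cO_{p,q+t}}$ from \Cref{thm:ACchar}, have to be leveraged. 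Everything else is the Kostant--Sekiguchi normal-form bookkeeping of Case~I adapted to $q<n$, which is routine but lengthy.
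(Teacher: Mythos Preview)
Your overall architecture is right and matches the paper: pass to the graded module via \Cref{lem:res.c}, realize $i_x^*\sB$ as $(\bC[Z]\otimes A\otimes\tau)^{K'_\bC\times K_\bC^t}$ with $Z$ the fiber over $x$ in the null cone $\bcN$ for $(H,G')$, and then analyze $Z$. Parts~(i) and~(iv) are handled essentially as you say.

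However, there is a genuine missing idea in your treatment of~(ii). You propose to ``compare the fibre of the covariant sheaf at $x$ with the fibre of the sheaf for $\theta^{p,q+t}(\sigma')$ at a point $\tilde x$ lying over $x$'', relying on a surjection $K_{\tilde x}\twoheadrightarrow K_x$ modulo unipotents. This does not by itself explain why a \emph{smaller} dual pair $(G_s,G_s')=(\rO(p-2q,t-q),\Sp(2(n-q),\bR))$ appears, nor why its theta lift $\theta^{p-2q,t-q}(\sigma')$ shows up in the fibre. The paper's proof hinges on a concrete geometric factorization that you have not identified: one constructs an auxiliary $Q$-variety $\cM$ (with $Q=L\times K_\bC^t\times K'_\bC$) and a $Q$-equivariant map $\pi\colon Z\to\cM$ such that $\cM$ is a single $Q$-orbit and the fibre $Z_m$ over a point $m\in\cM$ is, as a $Q_m$-space, exactly the null cone $\bcN_s$ for the pair $(G_s,G_s')$. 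This gives $\bC[Z]=\Ind_{Q_m}^Q\bC[\bcN_s]$, and plugging into \eqref{eq:irep.1} followed by Frobenius reciprocity produces the formula in~(ii) directly; the $\theta^{p-2q,t-q}(\sigma')$ factor arises because $(\bC[\bcN_s]\otimes A|_{K'_s})^{K'_s}=B_s$ by \Cref{cor:iso.c} applied to the smaller pair, which is in stable range. Your see-saw heuristic correctly predicts the shape of the answer but does not supply this reduction; without the map $\pi$ and the identification $Z_m\cong\bcN_s$, it is not clear how to get from the fibre description to the stated formula. The concerns you raise about the unipotent radical are also resolved by this construction: one only ever computes $\chi_x|_L$, and the $L$-action on $\bC[Z_m]\cong\bC[\bcN_s]$ factors through the explicit homomorphism $\gamma\colon L\to K_{s,\bC}$.

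There is also a gap in~(iii). The implication $\tchi_x\neq 0\Rightarrow\theta^{p,q}(L(\mu'))\neq 0$ is immediate (since $\tchi_x$ is a quotient of $B$), but the converse is not: from $B\neq 0$ you only know $\Supp\sB\subseteq\bcO$ is nonempty, not that $x\in\Supp\sB$. The paper closes this via \Cref{T61}, which shows $B=\Ind_{K_x}^{K_\bC}\chi_x$; the key input there is a codimension-$2$ estimate (\Cref{L62}) for the complement of an open orbit $\cD$ inside $\cN$, allowing one to identify $\bC[\cN]\to\bC[\cD]$. You should flag that this extension step is needed and is independent of the fibre computation in~(ii).
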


The proof of the above theorem is given in \Cref{S58}.  In general
$\dim \tchi_x$ is not equal to the dimension of the isotropy character
$\tchi_{x'}$ of $\Gr(L(\mu'))$ so \eqref{eq:AC.I} is usually invalid for
Case~II.

\subsection{}
Let $\fgg^{p,q+t} = \fkk_H \oplus \fpp_H$ denote the complexified
Cartan decomposition.  The inclusion $\fgg \rightarrow \fgg^{p,p+t}$
includes a projection map $\pr_H : \fpp_H^* \rightarrow \fpp^*$.  By
\Cref{P24}, $\theta^{p,q}(L(\mu'))$ occurs discretely as a submodule
in $\theta^{p,q+t}(\sigma')$. A general theory of \cite{Kob} gives
$\pr_H(\AV(\theta^{p,q+t}(\sigma')) \supseteq
\AV(\theta^{p,q}(L(\mu'))$.  For the representations considered in
Theorems \ref{thm:LL} and \ref{TC}, the containment is in fact an
equality.  See Lemma~\ref{lem:orbit.O1}

\subsection{}
In both Cases I and II above, we have the following theorem on the
$K$-spectrum of $\theta^{p,q}(L(\mu'))$.

\begin{thmA}\label{thm:KS}
  Suppose $p,q,t$ satisfies \eqref{eq16} and $\min\set{q,t}<n$, then
\[
\theta^{p,q}(L(\mu'))|_{\wtK} = \Ind_{\wtK_{x}}^{\wtK_\bC}\tchi_x.
\]
\end{thmA}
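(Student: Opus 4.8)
\textbf{Proof proposal for \Cref{thm:KS}.}

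The plan is to combine the associated-cycle computations of \Cref{thm:LL} (Case~I) and \Cref{thm:TC} (Case~II) with a dimension-counting argument that forces the good filtration on $\theta^{p,q}(L(\mu'))$ generated by the joint harmonics to be as clean as possible. First I would recall from \Cref{sec:fil} the natural filtration on $\Theta(\rho')$ coming from the degree filtration on the Fock model $\sY$ and its restriction to the joint harmonics; by the properties established there, $\Gr\theta^{p,q}(L(\mu'))$ is a finitely generated $(\cS(\fpp),\wtK_\bC)$-module whose support is $\AV(\theta^{p,q}(L(\mu')))=\overline{\cO_{p,q,t}}$ (using part~(ii) of \Cref{thm:LL} or part~(iv) of \Cref{thm:TC}), a \emph{single} orbit closure. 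The key geometric input is that $\cO_{p,q,t}$ is a single $K_\bC$-orbit, so $\overline{\cO_{p,q,t}}$ is irreducible, and the associated sheaf $\sA$ restricted to the open orbit $\cO_{p,q,t}$ is a $K_\bC$-equivariant coherent sheaf. An equivariant coherent sheaf on a homogeneous space $\cO_{p,q,t}\cong K_\bC/K_x$ is automatically locally free, hence corresponds to a $K_x$-representation, namely the isotropy representation $\tchi_x$; moreover it is then \emph{globally} the induced sheaf, so that as a $\wtK_\bC$-module $\Gamma(\cO_{p,q,t},\sA)=\Ind_{\wtK_x}^{\wtK_\bC}\tchi_x$.

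The substance of the argument is therefore to show that the natural map $\theta^{p,q}(L(\mu'))\to \Gamma(\overline{\cO_{p,q,t}},\sA)$ induced by the good filtration is a $\wtK$-module isomorphism — equivalently, that the filtration is ``saturated'' and that $\sA$ has no higher cohomology and no sections supported on the boundary $\overline{\cO_{p,q,t}}\setminus\cO_{p,q,t}$. My approach would be to run this through the Hilbert-series / Euler-characteristic identity: since $\Gr\theta^{p,q}(L(\mu'))$ and $\Gamma(\cO_{p,q,t},\sA)$ have the same associated cycle (this is exactly the content of the $\AC$ formulas in Theorems~\ref{thm:LL}(iv) and \ref{thm:TC}(iv), together with the identification $\chi_x\cong\tchi_x$ as the isotropy representation at $x$), the two $\wtK_\bC$-modules have the same ``leading term.'' To upgrade an equality of leading terms to an equality of the full $\wtK$-modules I would use the construction of the explicit $K$-equivariant invertible sheaf on $\cO_{p,q,t}$ from the proof of \Cref{thm:ACchar} together with the covariant-functor realization $\theta^{p,q}(L(\mu'))=\bigl(\theta^{p,q+t}(\sigma')\bigr)_{\,?}$ of \Cref{P24}: for $\theta^{p,q+t}(\sigma')$ we already know from \Cref{thm:ACchar}(iii)--(iv) that its filtration is saturated, i.e. $\theta^{p,q+t}(\sigma')|_{\wtK}=\Ind_{\wtK_{x_H}}^{\wtK_\bC}\tchi_{x_H}$, and taking covariants (which is exact on the relevant category of $\wtK_\bC$-modules, being a functor of $\wtK_\bC^{t-q}$-invariants) commutes with $\Ind$ along the inclusion $\pr_H$ of orbits described in the paragraph following \Cref{TC}. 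This is where the hypothesis $\min\{q,t\}<n$ is used: it guarantees we are in Case~I or Case~II as analyzed, where the fiber of $\pr_H$ over $\cO_{p,q,t}$ is understood and the covariant computation is exactly the restriction-to-$\wtL$ formula appearing in \Cref{TC}(ii).

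The main obstacle I anticipate is the last step: controlling the behaviour of the sheaf $\sA$ along the \emph{boundary} of $\overline{\cO_{p,q,t}}$ and ruling out ``extra'' $\wtK$-types in $\theta^{p,q}(L(\mu'))$ that would appear as sections of $\sA$ vanishing on the big orbit, or, dually, torsion in $\Gr\theta^{p,q}(L(\mu'))$ supported on smaller orbits. Equivalently, one must show the natural filtration is \emph{already good and saturated} rather than merely good. I would handle this by transporting the statement, via the covariant functor and the exactness just mentioned, to the stable-range case $\theta^{p,q+t}(\sigma')$, where saturation is known (it is built into \Cref{thm:ACchar}(iv), which is Yang's thesis result); the point is that the covariant functor is exact and sends the good saturated filtration on $\theta^{p,q+t}(\sigma')$ to the good filtration on $\theta^{p,q}(L(\mu'))$ under consideration, and exactness forbids the creation of boundary-supported torsion. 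Once this reduction is in place, the $K$-type identity $\theta^{p,q}(L(\mu'))|_{\wtK}=\Ind_{\wtK_x}^{\wtK_\bC}\tchi_x$ follows by comparing with $\Ind_{\wtK_{x_H}}^{\wtK_\bC}\tchi_{x_H}$ and applying Frobenius reciprocity together with the compatibility $\beta\colon K_x\to K'_{x'}$ (resp. the $\wtL$-restriction) recorded in the earlier theorems.
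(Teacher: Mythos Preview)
Your overall strategy---realize $\theta^{p,q}(L(\mu'))$ as the $K^t$-covariant of the stable-range lift $\theta^{p,q+t}(\sigma')$, transport the known $K$-spectrum identity \eqref{eq:Yang} through that functor, and identify the result with an induced representation on $\cO_{p,q,t}$---is exactly the direction the paper takes. But the step you flag as ``the main obstacle'' is a real gap in your argument, and the fix you propose does not work.

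The claim ``exactness [of taking $K^t$-invariants] forbids the creation of boundary-supported torsion'' is not valid as stated. Exactness of the covariant functor preserves short exact sequences of $K_{H,\bC}$-modules, but it says nothing about torsion-freeness of the resulting module over the \emph{different} base ring $\bC[\bcO]$ obtained by pushing forward along $\pr_H$. Concretely: even granting that $B_H$ is the global sections of a line bundle on the open orbit $\cO_H\subset\bcO_H$, the preimage $\pr_H^{-1}(\cO)\cap\cO_H$ is in general a proper open subset of $\cO_H$, and there is no formal reason why sections over $\cO_H$ should coincide with sections over that smaller open after taking $K^t$-invariants. Equality of associated cycles (your Hilbert-series argument) only matches leading terms and cannot rule out lower-dimensional contributions.

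The paper closes this gap by a direct geometric argument upstairs on the null cone, not downstairs on the orbit closure. Using \Cref{lem:res.c} and \Cref{prop:iso.cN} one has $B=(\bC[\cN]\otimes A\otimes\tau)^{K'_\bC\times K^t_\bC}$; by flat base change along the open embedding $i_\cO$, one has $\sB(\cO)=(\bC[\cD]\otimes A\otimes\tau)^{K'_\bC\times K^t_\bC}$ where $\cD=(\phi\circ\pr)^{-1}(\cO)\cap\cN$. The entire content of \Cref{thm:KS} then reduces to the isomorphism $\bC[\cN]\cong\bC[\cD]$, and \emph{this} is where the hypothesis $\min\{q,t\}<n$ is actually used: it guarantees (\Cref{L62}) that $\cN\setminus\cD$ has codimension at least $2$ in the smooth orbit $\cN$, so Hartogs-type extension (via \cite{CPS}) gives the isomorphism. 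Your proposal never isolates this codimension-$2$ fact, and without it the argument does not go through.
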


The proof is given in Section \ref{sec:kspec} and it is a consequence
of \Cref{T61}.

\smallskip

Motivated by geometric quantization of orbits, Vogan defined an {\it
  admissible} isotropy representation in Definition 7.13
in~\cite{VoAss}.  It is not difficult to see that
$\theta^{p,q}(L(\mu'))$ for $q \geq n \geq t$ and $\dim \mu = 1$ has
admissible isotropy representation.  Then by Theorems
\ref{thm:ACchar}(iii) and \ref{thm:KS}, these representations satisfy
Vogan's Conjecture 12.1 in \cite{VoAss}. Such modules are candidates
for the conjectured unipotent $(\frakg,\wtK)$-modules attached to the
orbits $\cO_{p,q,t}$. In \Cref{S6}, we will show that
$\theta^{p,q}(L(\mu'))$ is a special unipotent representation in the
sense of \cite{BV}.

\medskip

A first draft of this paper was written before \cite{LM}.  The
  current paper is a major revision where we incorporate ideas from
  \cite{LM}, bypass the $K$-types and asymptote calculations, and give
  more geometric and conceptual proofs to Theorems \ref{thm:ACchar} to
  \ref{thm:KS} above.

\subsection*{Notation}
In this paper, all varieties and schemes are defined over $\bC$.  We
will denote the ring of regular functions on a variety or scheme $X$
by $\bC[X]$. For a real Lie group $K$, its complexification is
  denoted by $K_\bC$.

\subsection*{Acknowledgment}
The first author is supported by an National University of Singapore
grant R-146-000-131-112. We thank K. Nishiyama for his comments on
associated cycles.

\section{Natural filtrations of theta lifts} \label{sec:fil}

In this section we will construct good filtrations using local theta
correspondences. Unless otherwise stated, all Lie algebras are complex
Lie algebras. 

We let $(G,G')$ be an arbitrary reductive dual pair in
$\spl(W_\bR)$. We do not assume that they are in stable range. Let $K$
and $K'$ denote the maximal compact subgroups of $G$ and $G'$
respectively.  We set $\frakg = \frakk \oplus \frakp$ and $\frakg' =
\frakk' \oplus \frakp'$ to be the complexified Cartan decomposition of
the Lie algebras of $G$ and $G'$ respectively. A tilde above a group
will denote an appropriate double cover which is usually clear from
the content.

\medskip

First we review Section 3.3 in Nishiyama and Zhu \cite{NZ}.

We recall that the Fock model of the oscillator representation of
$\wtSp(W_\bR)$ is realized on the Fock space $\sY \cong \bC[W]$ of
complex polynomials on the complex vector space $W$. We follow Howe's
notation~\cite{H2} about diamond dual pairs.  Let $(M,K')$ be the dual
pair in $\spl(W_\bR,\bR)$.  Let~$\frakk_M \oplus (\frakm^{(2,0)}
\oplus \frakm^{(0,2)})$ denote the Cartan decomposition of the
complexified Lie algebra of $M$ such that $\frakm^{(2,0)}$ acts by
multiplication of $K'$-invariant quadratic polynomials on $W$ and
$\frakm^{(0,2)}$ acts by degree two $K'$-invariant differential
operators.

Fact~3 in Howe's paper\cite{H2} states that in
$\frakm$, we have
\begin{equation} \label{eq:8}
\frakm^{(2,0)} \oplus \frakm^{(0,2)} = \frakp \oplus \frakm^{(0,2)}.
\end{equation}
The projection of $\frakp$ to $\frakm^{(2,0)}$ under the decomposition
of the left hand side of \eqref{eq:8} is a $K$-isomorphism.  We
identify $\frakp$ as $\frakm^{(2,0)}$ via
this projection.  

We also have the compact dual pair $(M',K)$ in
$\spl(W_\bR)$. In a similar fashion, we have a Cartan
decomposition
\[
\frakm' = \frakk_{M'} \oplus ({\frakm'}^{(2,0)} \oplus
{\frakm'}^{(0,2)}) \mbox{ \ and \ } \frakp' \cong {\frakm'}^{(2,0)}
\]
where $\frakp'$ is the non-compact part of $\frakg'$.

Let $\sY_b$ be the subspace of complex polynomials in $\bC[W]$ of
degree not greater than~$b$.  Then $\sY = \cup \sY_b$ and gives a
filtration of the Fock model $\sY$. Let $(\rho, V_{\rho})$ be the full
theta lift of an irreducible $(\fgg',\wtK')$-module $(\rho',
V_{\rho'})$. Let $\pi : \sY \twoheadrightarrow V_\rho \otimes V_{\rho'}$
be the natural quotient map. Let $(\tau, V_\tau)$ be a lowest degree
$\wtK$-type of $(\rho, V_\rho)$ of degree $j_0$.  Let $V_\tau \otimes
V_{\tau'}$ be the image of the joint harmonics. We define filtrations
on $V_{\rho}$ and $V_{\rho'}$ by $V_j = \cU_j(\fgg) V_{\tau}$ and
$V_j' := \cU_j(\fgg') V_{\tau'}$ respectively. The filtration $V_j'$
is a good filtration of $V_{\rho'}$ since $\rho'$ is irreducible, and
$V_j$ is a good filtration of $V_{\rho}$ because $V_{\rho} =
\cU(\fgg) V_{\tau}$ due to \cite{H2}.

We view $V_{\rho'^*} = \Hom_\bC(V_{\rho'}, \bC
)_{\wtK'-{\mathrm{finite}}}$.  Let $V_{\tau'^*} \subset V_{\rho'^*}$
be an irreducible $\wtK'$-submodule with type $\tau'^*$ which pairs
perfectly with $V_{\tau'}$.  By Theorem 13 (5) in \cite{He}, the
lowest degree $\wtK'$-type $V_{\tau'}$ has multiplicity one in
$\rho'$.  Hence $V_{\tau'}$ and $V_{\tau'^*}$ are well defined
subspaces in $V_{\rho'}$ and $V_{\rho'^*}$ respectively. We
define a good filtration $V_j'^* := \cU_j(\fgg')V_{\tau'^*}$ on
$V_{\rho'^*}$.
 
Let $l \in V_{\tau'^*}$ be a nonzero linear functional on
$V_{\rho'}$. We consider the composite map

\begin{equation} \label{eq:9} 
\xymatrix@C=4em{\nu \colon \sY
  \ar@{->>}[r]^<>(.5){\pi} &
  V_{\rho} \otimes V_{\rho'}  
 \ar@{->>}[r]^<>(.5){\id \otimes l}& V_{\rho}.
}
\end{equation}

We define $F_j = \nu(\sY_j)$. Since $\nu(\sY_{j_0}) = V_\tau$, we also
have $V_j = \cU_j(\frakg) \nu(\sY_{j_0})$. 

\begin{lemma} \label{L41}
  We have $F_{2j+j_0+1} = F_{2j+j_0}$ and $F_{2j+j_0} = V_j$.
\end{lemma}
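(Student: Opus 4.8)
The plan is to analyze the composite map $\nu$ degree by degree, using the explicit description of the $\frakp$ and $\frakp'$ actions coming from the Howe duality set-up. The key observation is that $\frakm^{(2,0)} \cong \frakp$ acts on the Fock space $\sY \cong \bC[W]$ by multiplication by $K'$-invariant quadratic polynomials, hence raises polynomial degree by exactly $2$; similarly $\frakm^{(0,2)}$ lowers degree by $2$, and $\frakk_M$ preserves degree. Dually, $\frakp' \cong {\frakm'}^{(2,0)}$ raises degree by $2$ on $\sY$. First I would record that, since $V_\tau$ is a lowest $\wtK$-type of degree $j_0$, we have $\pi(\sY_{j_0-1}) = 0$ on the $V_\rho$-factor when paired against $l$, i.e. $\nu(\sY_{j_0-1}) = 0$ and $\nu(\sY_{j_0}) = V_\tau$ (this is essentially the content of the sentence preceding the lemma). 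Because $l$ lands in $V_{\tau'^*}$ and $\sY$ is graded with the joint harmonics $V_\tau \otimes V_{\tau'}$ sitting in degree $j_0$, the parity of the degree is synchronized: polynomials of degree $j_0 + k$ contribute via $k/2$ applications of quadratic multiplication operators, so only $k$ even contributes anything after applying $\id \otimes l$. This gives the ``stabilization at even steps'' statement $F_{2j+j_0+1} = F_{2j+j_0}$.

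Next I would prove $F_{2j+j_0} = V_j$ by a double inclusion. For $\supseteq$: the sentence just before the lemma already notes $V_j = \cU_j(\frakg)\nu(\sY_{j_0})$, and since $\frakp \cong \frakm^{(2,0)}$ acts on $\sY$ by degree-$2$ raising operators that are intertwined by $\pi$ (Howe duality makes $\pi$ a $(\frakg,\wtK)\times(\frakg',\wtK')$-map), we get $\cU_j(\frakg)\,\nu(\sY_{j_0}) \subseteq \nu(\cU_j(\frakm^{(2,0)})\,\sY_{j_0}) \subseteq \nu(\sY_{j_0 + 2j}) = F_{2j+j_0}$. For $\subseteq$: given $f \in \sY_{j_0+2j}$, decompose it (using the $\frakk_M$-invariant harmonic decomposition, i.e. $\sY = \bigoplus_k \cU(\frakm^{(2,0)})\cdot(\text{harmonics in degree }k)$, together with the fact that the harmonics of degree $< j_0$ are killed by $\pi\circ(\id\otimes l)$ because $V_\tau$ is the lowest $K$-type and multiplicity one) to write $\nu(f)$ as a sum of terms $\nu(m\cdot h)$ with $m \in \cU_j(\frakm^{(2,0)})$ and $h$ a harmonic of degree $j_0$; since $\pi$ intertwines the $\frakm^{(2,0)} = \frakp$ action, $\nu(m\cdot h) = m\cdot \nu(h) \in \cU_j(\frakg) V_\tau = V_j$. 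Induction on $j$ closes the argument, with the base case $F_{j_0} = V_\tau = V_0$.

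The main obstacle I anticipate is the $\subseteq$ direction, specifically controlling the contribution of harmonics of degree strictly between $j_0$ and $j_0 + 2j$ (not just the extreme degrees). One needs that a degree-$(j_0+2k)$ harmonic $h$, pushed through $\nu$, still lands in $V_k \subseteq V_j$; this requires knowing that the $\wtK'$-type of the $V_{\rho'}$-component of $\pi(h)$ is in the $\cU_k(\frakg')$-span of $\tau'$ so that, after pairing with $l \in V_{\tau'^*}$ via a dual argument on the $\rho'$ side, nothing escapes. This is exactly where one invokes the multiplicity-one statement for the lowest $\wtK'$-type (Theorem 13(5) of \cite{He}, cited in the excerpt) together with the fact that $V_{\rho'}$ is irreducible and generated over $\cU(\frakg')$ by $V_{\tau'}$. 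Making the bookkeeping of these two synchronized filtrations (on the $\rho$ side and the $\rho'$ side) precise is the technical heart; once the degree-shift properties of the $\frakp$, $\frakp'$ and $\frakm^{(0,2)}$ actions are in hand, everything else is formal.
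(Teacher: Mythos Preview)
Your overall strategy is sound and close to the paper's, but there is a genuine gap in the hard inclusion $F_{2j+j_0}\subseteq V_j$. You write ``since $\pi$ intertwines the $\frakm^{(2,0)}=\frakp$ action, $\nu(m\cdot h)=m\cdot\nu(h)\in\cU_j(\frakg)V_\tau$''. This step conflates $\frakm^{(2,0)}$ and $\frakp$ as \emph{operators} on $\sY$. The identification in \eqref{eq:8} is only a $K$-isomorphism via projection inside $\frakm^{(2,0)}\oplus\frakm^{(0,2)}=\frakp\oplus\frakm^{(0,2)}$; as elements of $\frakm$ acting on $\sY$, an $X\in\frakm^{(2,0)}$ differs from its $\frakp$-component by a nonzero $\frakm^{(0,2)}$-term. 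Since $\nu$ is $(\frakg,\wtK)$-equivariant but not $\frakm$-equivariant, $\nu(m\cdot h)$ is \emph{not} equal to the $\frakp$-action of (the image of) $m$ on $\nu(h)$. The paper repairs exactly this point: setting $Y_j=\sY_{2j+j_0}(\tau')$, one has the key identity
\[
Y_j+\frakm^{(2,0)}Y_j \;=\; Y_j+\frakp\,Y_j,
\]
because the $\frakm^{(0,2)}$-correction lowers degree and hence maps $Y_j$ into $Y_{j-1}\subset Y_j$. Applying $\nu$ and the inductive hypothesis $\nu(Y_j)=V_j$ then yields $F_{2(j+1)+j_0}\subseteq V_j+\frakp V_j=V_{j+1}$. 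Your induction needs this swap-one-step-at-a-time mechanism; without it the argument does not close.

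A second, smaller point: the obstacle you anticipate about ``harmonics of degree strictly between $j_0$ and $j_0+2j$'' evaporates once you observe that $\nu$ factors through the $\tau'$-isotypic component $\sY(\tau')$ (because $l\in V_{\tau'^*}$). By \cite{H1,H2} one has $\sY(\tau')=\cU(\frakm^{(2,0)})\calH(\tau')$ with $\calH(\tau')\subset\sY^{j_0}$, so all relevant harmonics already sit in degree $j_0$ and there is nothing intermediate to control. This also gives the parity statement $F_{2j+j_0+1}=F_{2j+j_0}$ cleanly, since $\sY^j(\tau')=0$ for $j\not\equiv j_0\pmod 2$, making the multiplicity-one input from \cite{He} unnecessary for this particular lemma.
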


The proof is given in \Cref{SL41}. We remark that the first
equality of the above lemma was proved in \cite{NZ}. The second
equality was assumed without proof in that paper.

\Cref{L41} suggests that $\set{V_j}$ is a natural choice of
  filtration. We define $\Gr V_{\rho} = \bigoplus_{j=0}^\infty
V_j/V_{j-1}$ to be the corresponding graded module of $\rho$.

\newcommand{\prKp}{\pr_{K'}} \newcommand{\prpp}{\pr_{\fpp'}}

\subsection{}
For any $(\fgg',K')$-module $V$ and $\fss' \subset \fgg'$, we set
$V_{\fss'} = V/V(\fss')$ and $V_{\fss',K'} = V/V(\fss',K')$ where
$V(\fss') =\sspan\set{ X' v | v \in V, X'\in \fss'} $ and
\[V(\fss',K') = \sspan\set{ X' v,k'v -v| v \in \bfE, X \in \fgg', k \in
  K'}.\]

The next proposition gives another realization of  $\rho =
\Theta(\rho')$ which is crucial for us.

\begin{prop} \label{Prop:Theta}
We have
\[
\Theta(\rho') \cong (V_{\rho'^*} \otimes \sY)_{\fgg',K'} \cong 
\left((V_{\rho'^*} \otimes \sY)_{\fpp'}\right)^{K'}.
\]
\end{prop}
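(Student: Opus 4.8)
The plan is to start from Howe's quotient description \eqref{eqHowequotient}, namely $\sY/(\bigcap_{T}\ker T)\cong\rho'\otimes\Theta(\rho')$ where $T$ ranges over $\Hom_{\fgg,\wtK}(\sY,\rho')$, and rewrite it in dual form. First I would observe that a $(\fgg,\wtK)$-equivariant map $\sY\to\rho'$ is the same thing as a $(\fgg,\wtK)$-equivariant map $\sY\to\rho'$, and since $\rho'$ is irreducible with multiplicity-one behaviour on its lowest $\wtK'$-type, the space $\Hom_{\fgg',\wtK'}(\sY,\rho')$ is naturally identified with $\Hom_{\fgg',\wtK'}(\rho'^*,\sY^*)$ or, better, with a space into which $V_{\rho'^*}\otimes\sY$ maps. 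Concretely, contraction gives a $(\fgg,\wtK)$-equivariant map $V_{\rho'^*}\otimes\sY\to\Theta(\rho')$: pair the $V_{\rho'^*}$ factor against the $\rho'$ factor in $\sY\twoheadrightarrow\rho'\otimes\Theta(\rho')$. This is exactly the map $\nu$ of \eqref{eq:9} assembled over all $l\in V_{\rho'^*}$. The first step is therefore to check that this contraction map is surjective (clear, since $\nu$ is already surjective for a single nonzero $l$) and to identify its kernel.

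The second step is to show the kernel of $V_{\rho'^*}\otimes\sY\to\Theta(\rho')$ is exactly $(V_{\rho'^*}\otimes\sY)(\fgg',K')$, i.e.\ the span of $X'\cdot(\ell\otimes y)$ and $(k'\ell\otimes k'y-\ell\otimes y)$. One inclusion is formal: the contraction map kills the $\fgg'$-action and the $K'$-action because it factors through the $\rho'\otimes\Theta(\rho')$ quotient, on which $\fgg'$ and $\wtK'$ act only through the $\rho'$ tensor factor, and the pairing $V_{\rho'^*}\otimes V_{\rho'}\to\bC$ is $(\fgg',\wtK')$-invariant — so $X'$ and $k'-1$ act trivially after contraction. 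For the reverse inclusion, I would use that $\Theta(\rho')$ by definition \eqref{eqHowequotient} is the \emph{largest} $\rho'$-isotypic-free quotient, equivalently that $(V_{\rho'^*}\otimes\sY)_{\fgg',K'}$ is precisely $\Hom_{\fgg',\wtK'}(\rho',\sY)\otimes\Theta(\rho')$ collapsed — more carefully, I would compute $(V_{\rho'^*}\otimes\sY)_{\fgg',K'}$ as the $(\fgg',K')$-coinvariants of $V_{\rho'^*}\otimes\sY$ and use that coinvariants of $V_{\rho'^*}\otimes(\rho'\otimes\Theta(\rho'))$ equal $(V_{\rho'^*}\otimes V_{\rho'})_{\fgg',K'}\otimes\Theta(\rho')=\bC\otimes\Theta(\rho')$ by Schur/irreducibility, while the kernel $\bigcap_T\ker T$ of $\pi$, tensored with $V_{\rho'^*}$, contributes nothing to the $(\fgg',K')$-coinvariants — that last point is where the defining maximality of $\Theta(\rho')$ is used, and is the crux of the argument.

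For the third and final step, the second isomorphism $(V_{\rho'^*}\otimes\sY)_{\fgg',K'}\cong\big((V_{\rho'^*}\otimes\sY)_{\fpp'}\big)^{K'}$ is a general fact about Harish-Chandra modules: for any $(\fgg',K')$-module $V$, one has $V_{\fgg',K'}=(V_{\fpp'})_{K'}$ (kill $\fpp'$ first, then $\fkk'$ and $K'$, using $\fgg'=\fkk'\oplus\fpp'$ and that $\fkk'$ is spanned by $k'-1$ type elements at the Lie-algebra level), and then for a reductive $K'$ the coinvariants $(V_{\fpp'})_{K'}$ are canonically isomorphic to the invariants $(V_{\fpp'})^{K'}$ via the averaging/reductivity splitting. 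I would spell out that $V_{\fpp'}=V_{\rho'^*}\otimes\sY/(V_{\rho'^*}\otimes\sY)(\fpp')$ carries an algebraic $K'$-action and is a locally finite (in fact finitely generated) module so that complete reducibility applies. The main obstacle is genuinely the second step — showing that passing to $(\fgg',K')$-coinvariants annihilates the contribution of $\bigcap_T\ker T$, equivalently that $\Theta(\rho')$'s definition as the multiplicity space forces the contraction map's kernel to be no larger than $(V_{\rho'^*}\otimes\sY)(\fgg',K')$; this requires care with the fact that $\sY$ is infinite-dimensional and not semisimple as a $(\fgg',\wtK')$-module, so one cannot simply decompose it, and I expect to invoke \cite{H2} (finiteness of length of $\Theta(\rho')$ and the structure of the Howe quotient) to control it.
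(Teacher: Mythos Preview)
Your third step (the second isomorphism via compactness of $K'$) matches the paper's one-line argument. For the first isomorphism your outline is reasonable, but you leave the crux --- that $V_{\rho'^*}\otimes\sN$, with $\sN=\bigcap_T\ker T$, contributes nothing to the $(\fgg',K')$-coinvariants --- as an unspecified appeal to \cite{H2}. (A slip along the way: $\sY/\sN$ is the maximal $\rho'$-\emph{isotypic} quotient, not $\rho'$-isotypic-free.) Be warned that $(V_{\rho'^*}\otimes\sN)_{\fgg',K'}$ itself need not vanish, since $\sN$ may well admit $\rho'$ as a quotient even though no map $\sY\to\rho'$ restricts nontrivially to $\sN$; what you actually need is only that its \emph{image} in $(V_{\rho'^*}\otimes\sY)_{\fgg',K'}$ vanishes.

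The paper handles this by a different and cleaner route: it dualizes from the outset. Tensor--Hom adjunction together with $\wtK'$-finiteness of $\sY$ gives
\[
\Hom_\bC\bigl((V_{\rho'^*}\otimes\sY)_{\fgg',K'},\bC\bigr)=\Hom_{\fgg',\wtK'}(\sY,V_{\rho'}),
\]
and then the very definition of $\sN$ yields the tautology $\Hom_{\fgg',\wtK'}(\sY,\rho')=\Hom_{\fgg',\wtK'}(\sY/\sN,\rho')$. Reversing the same adjunctions with $\sY$ replaced by $\sY/\sN\cong\rho'\otimes\Theta(\rho')$ produces $\Hom_\bC(\Theta(\rho'),\bC)$, and since the composite is the dual of your (surjective) contraction map, the contraction is an isomorphism. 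The point is that the defining property of $\sN$ is a statement about $\Hom$-spaces, not about coinvariants, so the dual picture is where it becomes a one-liner. Your approach can in fact be completed by the same observation --- every linear functional on $(V_{\rho'^*}\otimes\sY)_{\fgg',K'}$ corresponds to an element of $\Hom_{\fgg',\wtK'}(\sY,\rho')$ and hence kills $\sN$, so the image of $V_{\rho'^*}\otimes\sN$ is annihilated by all functionals and is therefore zero --- but that \emph{is} the paper's dualization trick, and it is the concrete idea your proposal is missing.
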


\begin{proof}
  Since $K'$ is compact, the last equality follows if we identify
  $K'$-invariant quotient as $K'$-invariant subspace.

Now we prove the first identity. Let $\sN = \bigcap_{\psi \in
    \Hom_{\fgg',\wtK'}(\sY, \rho')} \ker \psi$ as in
  \eqref{eqHowequotient}. We have
\begin{eqnarray} 
  \lefteqn{  
    \Hom_\bC((V_{\rho'^*} \otimes \sY)_{\fgg',K'},\bC)
    = \Hom_{\fgg',K'}((V_{\rho'^*} \otimes \sY)_{\fgg',K'},\bC)} \nonumber \\
  & = & \Hom_{\fgg',K'}(V_{\rho^*} \otimes \sY, \bC)
  = \Hom_{\fgg',\wtK'}(\sY, \Hom_{\bC}(V_{\rho'^*},\bC)) =
  \Hom_{\fgg',\wtK'}(\sY, V_{\rho'}) \label{eqz12} \\
  & = & \Hom_{\fgg',\wtK'}(\sY/\sN, V_{\rho'}). \label{eqz14}
\end{eqnarray}
The last equality in \eqref{eqz12} follows from the fact that $\sY$ is
$\wtK'$-finite and $\Hom_{\bC}(V_{\rho'^*},\bC)_{\wtK'-\text{finite}} =
V_{\rho'}$.  Starting from \eqref{eqz14}, we reverse the steps by
replacing $\sY$ with $\sY/\sN$ in \eqref{eqHowequotient} and we get
\begin{eqnarray*}
\lefteqn{\Hom_\bC((V_{\rho'^*} \otimes \sY)_{\fgg',K'},\bC)
\cong \Hom_\bC((V_{\rho'^*} \otimes \sY/\sN)_{\fgg',K'},\bC)} \\
& \cong & \Hom_\bC( (V_{\rho'^*} \otimes V_\rho \otimes
V_{\rho'})_{\fgg',K'},\bC)
\cong \Hom_\bC(V_{\rho},\bC).
\end{eqnarray*}
This proves the proposition.
\end{proof}

\subsection{}
Let $\bfE = V_{\rho'^*} \otimes \sY$. We summarize
\Cref{Prop:Theta} in the following diagram
\[
\xymatrix{ \bfE=V_{\rho'^*} \otimes \sY \ar@/^2pc/@{->>}[rr]^{\eta}
  \ar@{->>}[r]^<>(.5){\pr_{\fpp'}} & (V_{\rho'^*} \otimes \sY)_{\fpp'}
  \ar@{->>}[r]^<>(.5){\pr_{K'}}& \left((V_{\rho'^*} \otimes
    \sY)_{\fpp'}\right)^{K'} = V_\rho.  }
\]
where $\pr_{\fpp'}$ is the projection map to the
  $\fpp'$-coinvariant quotient space, $\pr_{K'}$ is the projection
  map to the $K'$-invariant subspace and  $\eta \colon \bfE \to
  \bfE_{\fgg',K'} = V_{\rho}$ is the natural quotient map.

We define a filtration on $\bfE$ by
\begin{equation}\label{eq:wjdec}
\bfE_j = \sum_{2a+b=j} V_a'^* \otimes \sY_b
\end{equation}
and a filtration on $V_{\rho}$ by $E_j = \eta(\bfE_{j_0 + 2j}) =
\eta(\bfE_{j_0 + 2j+1})$.

\begin{lemma} \label{L33}
  The filtrations $E_j$ and $V_j = \cU_j(\fgg) V_\tau$ on
  $(\rho,V_{\rho})$ are the same.
\end{lemma}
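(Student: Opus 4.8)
The goal is to identify the filtration $E_j = \eta(\bfE_{j_0+2j})$ on $V_\rho$ coming from the realization in \Cref{Prop:Theta} with the filtration $V_j = \cU_j(\fgg)V_\tau$. The strategy is to connect $\eta$ with the map $\nu$ of \eqref{eq:9} and then invoke \Cref{L41}. First I would spell out the comparison between the two realizations of $\Theta(\rho')$: the quotient map $\pi\colon\sY\twoheadrightarrow V_\rho\otimes V_{\rho'}$ of \eqref{eqHowequotient}, composed with the perfect pairing $V_{\tau'^*}\otimes V_{\tau'}\to\bC$, gives, for each fixed $l\in V_{\tau'^*}$, the map $\nu$; on the other hand $\eta$ is built from the perfect pairing in the other direction, $V_{\rho'^*}\otimes V_{\rho'}\to\bC$ applied after $\id\otimes\pi$. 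The key observation is that these two constructions are adjoint to each other under the identifications used in the proof of \Cref{Prop:Theta}, so that $\eta$ restricted to the summand $V_{\tau'^*}\otimes\sY$ (i.e. $a=0$ in \eqref{eq:wjdec}) is exactly $\nu$ up to the choice of $l$; and more importantly $\eta(\,V_a'^*\otimes\sY_b\,)$ should be expressible via the $\cU(\fgg')$-action transported through $\pi$.

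Concretely, the plan is to compute $\eta$ on $\bfE_{j_0+2j}=\sum_{2a+b=j_0+2j}V_a'^*\otimes\sY_b$. Since $V_a'^*=\cU_a(\fgg')V_{\tau'^*}$ and $\fgg'$ acts on $\sY$ commuting with the $\fgg$-action (the two members of the dual pair commute), moving the $\cU_a(\fgg')$ across $\pi$ lets us write $\eta(u'\cdot(l\otimes w)) = \pm\,l\cdot(u'^\vee\pi(w))$ for $u'\in\cU_a(\fgg')$, $w\in\sY_b$, where $u'^\vee$ is a suitable transpose acting on the $V_{\rho'}$-factor — but since we then contract against $l\in V_{\tau'^*}$ and $V_{\tau'}$ has multiplicity one (Theorem 13(5) in \cite{He}, as recalled in the text), the $\fgg'$-action gets absorbed and does not enlarge the image beyond what $\nu(\sY_b)$ already produces. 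Thus $\eta(V_a'^*\otimes\sY_b)\subseteq F_b = \nu(\sY_b)$, and taking the union over $2a+b=j_0+2j$ gives $E_j\subseteq F_{j_0+2j}=V_j$ by \Cref{L41}. For the reverse inclusion, the $a=0$ term already gives $\eta(V_{\tau'^*}\otimes\sY_{j_0+2j})=\nu(\sY_{j_0+2j})=F_{j_0+2j}=V_j$, so $E_j\supseteq V_j$. Combining, $E_j=V_j$.

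The main obstacle I anticipate is the bookkeeping in the first inclusion: making precise the claim that pushing $\cU_a(\fgg')$ through $\pi$ and then contracting against the fixed vector $l$ in the lowest $\wtK'$-type does not increase the degree in $\sY$. This requires carefully using that $V_{\tau'}$ (equivalently $V_{\tau'^*}$) has multiplicity one in $\rho'$ (resp. $\rho'^*$), so that the $(\fgg',K')$-coinvariants quotient $\bfE_{\fpp'}$ followed by $K'$-invariants genuinely kills the extra $\fgg'$-directions — one has to check that the filtration degree is controlled by the relation $\deg = 2a+b$ and that the $a$-part contributes nothing new modulo lower terms. A clean way to organize this is to first establish the inclusions $\eta(\bfE_{j_0+2j})\subseteq V_j$ and $\nu(\sY_{j_0+2j})\subseteq\eta(\bfE_{j_0+2j})$ separately as displayed chains, each a short diagram chase using \Cref{Prop:Theta}'s identifications, and then quote \Cref{L41} to close the loop. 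I would also double-check the parity shift: the factor $2a$ in \eqref{eq:wjdec} matches the ``degree two'' action of $\fgg'\hookrightarrow\frakm'^{(2,0)}$ on $\sY$ recorded around \eqref{eq:8}, which is precisely why the comparison is with $\bfE_{j_0+2j}$ and not $\bfE_{j_0+j}$.
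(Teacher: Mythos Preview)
Your overall architecture matches the paper's: prove $V_j\subseteq E_j$ from the $a=0$ summand via \Cref{L41}, and prove $E_j\subseteq V_j$ by transferring $\cU_a(\fgg')$ across the tensor. The first inclusion is fine. The second has a genuine error in the key step.

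You assert $\eta(V_a'^*\otimes\sY_b)\subseteq F_b$, justified by ``$V_{\tau'}$ has multiplicity one, so the $\fgg'$-action gets absorbed.'' Both the claim and the justification are wrong. Multiplicity one was only used in the paper to pin down $V_{\tau'^*}$ as a well-defined subspace; it plays no role in the degree estimate. And the inclusion into $F_b$ is simply false: take $a=1$, $b=j_0$, $X'\in\fgg'$, $l\in V_{\tau'^*}$, $w\in\sY_{j_0}$; then $\eta(X'l\otimes w)=-\eta(l\otimes X'w)$ lands in $F_{j_0+2}=V_1$, not in $F_{j_0}=V_\tau$ in general.

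The correct mechanism is the one the paper uses and which you almost state but then misapply. Since $\eta$ is the quotient to the $(\fgg',K')$-coinvariants, $\fgg'$ acts trivially on the image; hence for $u'\in\cU_a(\fgg')$ one has $\eta(u'l\otimes w)=\eta(l\otimes u'^\vee w)$ with $u'^\vee$ the antipode. Now use the degree-two fact you mention only at the end: $\cU_a(\fgg')\sY_b\subseteq\sY_{b+2a}$. This gives
\[
\eta(V_a'^*\otimes\sY_b)=\eta(V_{\tau'^*}\otimes\cU_a(\fgg')\sY_b)\subseteq\eta(V_{\tau'^*}\otimes\sY_{b+2a})=F_{b+2a},
\]
and for $2a+b=j_0+2j$ this is $F_{j_0+2j}=V_j$ by \Cref{L41}. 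So the conclusion $E_j\subseteq V_j$ follows, but only with the corrected bound $F_{b+2a}$, not $F_b$. Drop the multiplicity-one argument entirely here; it is irrelevant to this step.
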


\begin{proof}
Since $\nu(\sY_{j_0+j}) = V_{\lfloor j/2 \rfloor}$, we have
$\eta(V_{\tau'^*} \otimes \sY_{j_0+2j}) = V_j$.  Hence $V_j \subseteq
E_j$. On the other hand, for $a + {\lfloor b/2 \rfloor} = j$,
\begin{eqnarray*}
  \eta(V_a'^* \otimes \sY_{j_0+b}) 
  & = & \eta(\cU_a(\fgg') V_{\tau'^*} \otimes \sY_{j_0+b}) \\
  & = & \eta(V_{\tau'^*} \otimes \cU_a(\fgg')\sY_{j_0+b})\
  \ (\text{Since $\fgg'$ acts trivially on the image}) \\
  & \subseteq & \eta(V_{\tau'^*} \otimes \sY_{j_0+b+2a} ) 
  \subseteq V_j.
\end{eqnarray*}
By \eqref{eq:wjdec}, $E_j \subseteq V_j$ and this proves the
lemma.
\end{proof}

Taking the graded module, $\eta$ induces a map
\begin{equation} \label{eq19b}
\xymatrix{ \Gr \rho'^* \otimes_{\cS(\fpp)}  \Gr \sY
  \ar@{->>}[r] & \Gr(\pr_\fpp(\bfE)) \ar@{->>}[r]^<>(.5){\Gr\pr_K}& \Gr
  \rho.}
\end{equation}
We will study \eqref{eq19b} more thoroughly in \cite{LM}. 

\subsection{} \label{S24} We recall that the unitary group $\rU =
\rU(W)$ is a maximal compact subgroup of $\Sp(W_\bR)$. Let
$\fraksp(W_\bR) \otimes \bC = \fuu \oplus \fss$ denote the
complexified Cartan decomposition. Let $\varsigma$ be the minimal one
dimensional $\widetilde{\rU}$-type of the Fock model~$\sY$. For $k \in
\rU$, $\varsigma^{-2}(k)$ is equal to the determinant of the $k$
action on $W$. We extend $\varsigma$ to an
$(\cS(\fss),\widetilde{\rU})$-module where $\fss$ acts trivially. We
will continue to denote this one dimensional module by $\varsigma$.
In this way, $\Gr \sY = \oplus(\sY_{a+1}/\sY_a) \simeq \varsigma
\otimes \bC[W]$ where $\rU$ acts geometrically on $\bC[W]$. Since
$(G,G')$ is a reductive dual pair in $\spl(W_\bR)$, we denote the
restriction of $\varsigma$ as a $(\cS(\fpp),\wtK)$-module by
$\varsigma|_{\wtK}$. Similarly we get a one dimensional
$(\cS(\fpp'),\wtK')$-module $\varsigma|_{\wtK'}$.

Let $A = \varsigma|_{\wtK'} \otimes \Gr V_{\rho'^*}$ and $B =
\varsigma^*|_{\wtK} \otimes \Gr V_{\rho}$. Since $\rho'$ is a genuine
Harish-Chandra module of~$\widetilde{G}$, $A$ is an
$(\cS(\fpp'),K_\bC')$-module. Similarly $B$ is an
$(\cS(\fpp),K_\bC)$-module.

We take $\varsigma$ into account and the fact that $K'$ acts on $A
\otimes \bC[W]$ reductively and preserves the degrees. Then
\eqref{eq19b} gives the following $(\cS(\fpp),K_\bC)$-module morphisms
\begin{equation} \label{eq:gretaiso} \xymatrix{
   A \otimes_{\cS(\fpp')} \bC[W] \ar@{->>}[r]^<>(.5){\eta_1} &
    \left(A \otimes_{\cS(\fpp')} \bC[W] \right)^{K'_\bC}
    \ar@{->>}[r]^<>(.5){\eta_0}&  B.  }
\end{equation}
The merit of introducing $\varsigma$ is that the $\wtK \cdot \wtK'$
action on $\Gr \sY$ descends to a geometric $K_\bC \times K_\bC'$
action on $\bC[W]$.

\begin{prop} \label{L3} Suppose $(G,G')$ is in the stable range where
  $G'$ is the smaller member.  Then~$\eta_0 : (A
  \otimes_{\cS(\fpp')} \bC[W])^{K'_\bC} \stackrel{\sim}{\rightarrow} B$ in
  \eqref{eq:gretaiso} is an isomorphism of $(\cS(\fpp),
  K_\bC)$-modules. 
\end{prop}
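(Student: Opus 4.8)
The plan is to show that the surjection $\eta_0$ in \eqref{eq:gretaiso} is injective by a dimension/support count, exploiting the stable range hypothesis via the geometry of the moment maps in \eqref{eq2}. First I would observe that both sides of $\eta_0$ are finitely generated $(\cS(\fpp),K_\bC)$-modules, and since $\eta_0$ is already known to be surjective, it suffices to prove that it is injective. The natural strategy is to compare associated varieties (supports as $\cS(\fpp)$-modules) and then rule out a nontrivial kernel. The support of the target $B = \varsigma^*|_{\wtK}\otimes\Gr V_\rho$ is $\AV(\rho) = \AV(\Theta(\rho'))$, while the support of the source $(A\otimes_{\cS(\fpp')}\bC[W])^{K'_\bC}$ is controlled by the image under $\phi$ of $\psi^{-1}(\AV(\rho'^*))$, i.e.\ by $\theta(\AV(\rho'^*))$ in the notation of \Cref{S14}. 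In stable range with $G'$ the smaller member these two closed $K_\bC$-invariant subsets of $N(\fpp^*)$ coincide (this is essentially the orbit-correspondence input of \cite{LM}, \cite{NZ}), so $\eta_0$ is a surjection between modules with the same support.

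Next I would upgrade this to an honest isomorphism. The key point is that in the stable range the relevant geometry is especially clean: the moment map $\psi\colon W \to \fpp'^*$ is onto $N(\fpp'^*)$ with reduced, irreducible fibers, $K'_\bC$ acts on those fibers with a dense orbit (or at least the invariant-theoretic quotient $W /\!\!/ K'_\bC \to \fpp^*$ behaves well), so that the formation of $K'_\bC$-invariants commutes with the relevant base change over $\fpp^*$. Concretely, I would argue that $A\otimes_{\cS(\fpp')}\bC[W]$ is, up to the twist by $\varsigma$, the pushforward of a coherent sheaf on $\psi^{-1}(\AV(\rho'^*))$ to $\fpp^*$ via $\phi$, and that taking $K'_\bC$-invariants corresponds to descent along the quotient map for the $K'_\bC$-action. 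Because the $K'_\bC$-action in the stable range is (fiberwise) free on a dense open set and has no higher invariant cohomology in the range that matters (reductivity of $K'_\bC$ handles the exactness of $(-)^{K'_\bC}$), the resulting $(\cS(\fpp),K_\bC)$-module on the source has no ``excess'' compared to $B$. Comparing ranks generically on each irreducible component $\overline{\cO_j}$ of the common support then forces $\ker\eta_0$ to be supported on a proper closed subset; a Noetherianity/associated-prime argument (a nonzero submodule of a torsion-free-on-its-support module cannot be supported on a smaller-dimensional set) shows $\ker\eta_0 = 0$.

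To make the generic rank comparison precise I would localize at the generic point $x\in\cO$ of the top component, where $\overline\cO = \theta(\overline{\cO'})$ with $\cO' = \cO'$ the corresponding orbit for $\rho'^*$. On the target side the stalk is the isotropy representation computation governing $m(\cO,\rho)$; on the source side it is computed from the fiber $\phi^{-1}(x)$ together with its $K'_\bC$-action on $\psi^{-1}(\overline{\cO'})$, and the stable-range hypothesis guarantees this fiber is a single $K'_\bC$-orbit (a homogeneous space for $K'_\bC$ whose point stabilizer matches the stabilizer data in $K_\bC$), so that $K'_\bC$-invariants of functions on the fiber tensored with the isotropy data of $\rho'^*$ recover exactly the isotropy data of $\rho$. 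This is where the bulk of the work lies, and it is also where I expect the main obstacle: establishing that in the stable range the composite fiber $\phi^{-1}(x)\cap\psi^{-1}(\AV(\rho'^*))$ is a nice homogeneous $K'_\bC$-space with the expected stabilizer, so that $(-)^{K'_\bC}$ neither kills nor inflates the generic rank. Once that geometric fact is in hand, the isomorphism statement for $\eta_0$ follows formally, and in fact one gets the stronger conclusion that $\eta_1$ is also generically an isomorphism, with $\Gr\rho$ identified as the $K'_\bC$-invariants of a coherent sheaf pushed forward along $\phi$.
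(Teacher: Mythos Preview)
The paper does not give a proof here; it simply writes ``We refer to \cite{LM} for a proof.'' So there is no argument in the paper to compare your proposal against. That said, your strategy has a genuine circularity that would have to be removed before it could work.

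Your first step is to assert that the support of $B=\varsigma^*|_{\wtK}\otimes\Gr\Theta(\rho')$, namely $\AV(\Theta(\rho'))$, equals $\theta(\AV(\rho'^*))$. But in the logical structure of this paper (and, as far as one can tell, of \cite{LM} as well) that equality is a \emph{consequence} of Proposition~\ref{L3}, not an input to it: see \eqref{eqnn13} and the way Sections~3 and~5 deduce the associated variety statements from \Cref{L3} via \Cref{cor:iso.c} and \eqref{eq:irep.1}. Likewise, your generic rank comparison at a point $x\in\cO$ is precisely the isotropy representation computation of \Cref{L46} and \eqref{eq:chix.I}, which again sits downstream of \Cref{L3}. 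So as written your argument assumes what it is trying to prove.

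There is a second gap even modulo the circularity. After matching supports and generic ranks you invoke ``a nonzero submodule of a torsion-free-on-its-support module cannot be supported on a smaller-dimensional set''. But you never establish that the source $(A\otimes_{\cS(\fpp')}\bC[W])^{K'_\bC}$ is torsion-free over its support; a priori $\ker\eta_0$ could be a nonzero torsion module living on $\partial\cO$, and nothing you have said excludes this.

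A cleaner route, and the one the paper itself notes works in the special case $\rho'=\sigma'$ (see the sentence just before \Cref{cor:iso.c}, where \Cref{L3} is observed to follow directly from \Cref{T22}), is to bypass supports entirely and compare $\wtK$-multiplicities. Both source and target are graded admissible $\wtK$-modules and $\eta_0$ is a graded surjection; if one shows the $\wtK$-types agree (which in the stable range is a Howe/harmonics computation of the same flavour as \Cref{T22}), injectivity is immediate. This is presumably the shape of the argument in \cite{LM}.
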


We refer to \cite{LM} for a proof.

\section{Local theta lifts of unitary characters} \label{S2}

Throughout this section, we let $(G,G') = (G^{p,q},G') = (\rO(p,q),
\spl(2n,\bR))$ be a dual pair in stable range
(see~\eqref{eq:range.C}). We consider the theta lift $\theta(\sigma')$
of the genuine unitary character $\sigma'$ of $\wtG'$.  We will the
discuss the associated cycle and the isotropy representation of
$\theta(\sigma')$.

\subsection{} Let $\sigma'$ be the genuine unitary character of
$\wtG'$. It exists if and only if the double cover
$\wtG'$ splits over $G'$, i.e. $p+q$ is even. First we recall some
facts in \cite{Lo} about the local theta lift of $\sigma'$ to
$\wtG$. Also see \cite{KO} when $n = 1$. Let~$\fgg$ denote the
complexifixed Lie algebra of $G$. Let $K = K^{p,q} = G^{p,0} \times
G^{0,q}$ be the maximal compact subgroup of $G$.


\begin{prop} \label{P32} Suppose $(G^{p,q},G')$ is in stable range
  where $G'$ is the smaller member satisfying \eqref{eq:range.C} and
  $p+q$ is even. Let $\sigma'$ be the genuine character of
  $\wtG'$. Then $\Theta(\sigma')$ is a nonzero, irreducible and
  unitarizable $(\fgg,\wtK)$-module. In particular $\Theta(\sigma') =
  \theta(\sigma')$.
\end{prop}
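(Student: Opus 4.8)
The plan is to establish the three asserted properties of $\Theta(\sigma')$ in turn — nonvanishing, unitarizability, irreducibility — after which the equality $\Theta(\sigma') = \theta(\sigma')$ is automatic, since $\theta(\sigma')$ is by definition the unique irreducible quotient of $\Theta(\sigma')$ (Theorem~2.1 of~\cite{H2}). The most economical route is simply to cite \cite{Lo} and \cite{ZH}, which settle exactly this statement by J.-S.\ Li's stable-range techniques; below I indicate how such an argument is organized and where the stable-range hypothesis is really used.

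For nonvanishing, one may either invoke the stable-range non-vanishing theorem (due to J.-S.\ Li) directly, or — more in keeping with the methods of this paper — read it off from \Cref{L3}. Indeed, for the one-dimensional genuine character $\sigma'$ the module $A = \varsigma|_{\wtK'} \otimes \Gr V_{\sigma'^*}$ is a one-dimensional $(\cS(\fpp'),K'_\bC)$-module on which $\fpp'$ acts by zero (because $\sigma'|_{\fgg'}$ is trivial, so its good filtration $V_j'^{*}=\cU_j(\fgg')V_{\tau'^*}$ is constant, and $\fpp'$ acts by zero on $\varsigma|_{\wtK'}$ by \Cref{S24}). Hence \Cref{L3} gives
\[
\Gr V_{\rho} \;\cong\; \varsigma|_{\wtK} \otimes \bigl( A \otimes_{\bC} \bC[\psi^{-1}(0)] \bigr)^{K'_\bC},
\]
where $\psi^{-1}(0)$ is the (scheme-theoretic) null cone in $W$. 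Since $A$ restricts to a character $\chi$ of $K'_\bC$, the right-hand side is the $\chi^{-1}$-isotypic subspace of $\bC[\psi^{-1}(0)]$; in the stable range the structure of $\psi^{-1}(0)$ and of its quotient by $K'_\bC$ is governed by \cite{Ohta} and \cite{DKP}, and this character does occur, so the space of invariants is nonzero and $\Theta(\sigma') = V_{\rho} \neq 0$.

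For unitarizability, I would use the $\wtSp(W_\bR)$-invariant positive-definite Hermitian form on the Fock model $\sY$ together with the unitarity of the character $\sigma'$ to produce a $\wtG$-invariant Hermitian form on $\Theta(\sigma')$, via the realization $\Theta(\sigma') \cong \bigl( (V_{\sigma'^*} \otimes \sY)_{\fpp'} \bigr)^{K'}$ of \Cref{Prop:Theta}. In the stable range \eqref{eq:range.C} this form is positive-definite and, crucially, non-degenerate on all of $\Theta(\sigma')$; this is Li's unitarizability argument, which applies to the full lift and not merely to its irreducible quotient (see \cite{Lo}, \cite{ZH}). Hence $\Theta(\sigma')$ is unitarizable.

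Irreducibility is then formal: by \cite{H2} the module $\Theta(\sigma')$ has finite length; a unitarizable Harish-Chandra module of finite length is completely reducible; and a completely reducible module possessing a unique irreducible quotient must be irreducible. Therefore $\Theta(\sigma')$ is irreducible, and $\Theta(\sigma') = \theta(\sigma')$. The main obstacle in this scheme is the non-degeneracy of the invariant Hermitian form on the \emph{whole} of $\Theta(\sigma')$ — equivalently, the unitarizability of $\Theta(\sigma')$ itself rather than only of $\theta(\sigma')$ — and this is exactly the point where the stable-range hypothesis \eqref{eq:range.C} is indispensable, through Li's convergence estimates for the relevant matrix-coefficient integrals.
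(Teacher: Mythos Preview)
Your proof is correct and follows essentially the same route as the paper's own proof, which simply cites \cite{ZH} and \cite{Lo} for irreducibility (with \cite{LM} as an alternative) and \cite{Li} for unitarizability of $\theta(\sigma')$. What you have written is effectively an unpacking of those references: unitarizability of the \emph{full} lift via Li's Hermitian form in the stable range, and then irreducibility deduced formally from complete reducibility plus the unique irreducible quotient property --- which is exactly the argument the paper itself spells out later for \Cref{P24}(ii).

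One small caveat on your nonvanishing argument via \Cref{L3}: that proposition is proved in \cite{LM}, and you would need to check that its proof there does not already presuppose $\Theta(\rho')\neq 0$ (the filtration and the maps $\eta_0,\eta_1$ in \eqref{eq:gretaiso} are set up starting from a lowest-degree $\wtK$-type of $V_\rho$, which tacitly assumes $V_\rho\neq 0$). Since you also offer the direct citation of Li's stable-range nonvanishing, this does not affect the validity of your argument, but the ``in-paper'' route through \Cref{L3} is not self-contained for this purpose.
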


\begin{proof} 
  The fact that $\Theta(\sigma')$ is irreducible follows from \cite{ZH}
  or \cite{Lo}. It is also a special case of Theorem A in \cite{LM}.
  The fact that $\theta(\sigma')$ is unitarizable follows from
  \cite{Li}.
\end{proof}

The $\wtK$-types of $\theta(\sigma') = \Theta(\sigma')$ are well
known. For example see \cite{Lo} and \cite{Zhu}. It is also a special
case of Propositions 2.2 and 3.2 in \cite{LM}. We state it as a
proposition below. 

\begin{prop} \label{T22} Let $\wtcH$ denote the $\wtK$-harmonics in
  the Fock model $\sY$ for the dual pair $(G,G')$ in stable
  range. Then as $\wtK$-modules,
\[
  \theta(\sigma')|_{\wtK} = \left(\wtcH\otimes
    \sigma'^*\right)^{\wtK'}.  \qed 
\]
\end{prop}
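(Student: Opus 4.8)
The plan is to derive the $\wtK$-type formula for $\theta(\sigma')$ from the realization of the full theta lift given in \Cref{Prop:Theta} together with the stable-range isomorphism $\eta_0$ of \Cref{L3}. First I would apply \Cref{Prop:Theta} with $\rho' = \sigma'$, so that $V_{\rho'^*}$ is the one-dimensional genuine character $\sigma'^*$ of $\wtG'$ whose restriction to $\fgg'$ is trivial; then $\Theta(\sigma') \cong ((\sigma'^* \otimes \sY)_{\fpp'})^{K'}$. Since $\sigma'^*$ is one-dimensional with trivial $\fgg'$-action, $\fpp'$ acts on $\sigma'^* \otimes \sY$ only through its action on $\sY$, so $(\sigma'^* \otimes \sY)_{\fpp'} \cong \sigma'^* \otimes (\sY / \fpp' \sY)$ as $\wtK'$-modules. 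The quotient $\sY / \fpp'\sY$ is, by the standard harmonic decomposition (the separation of variables for the dual pair, $\sY \cong \cS(\fpp') \otimes \wtcH$ as $\cS(\fpp') \otimes \wtK \cdot \wtK'$-modules), identified $\wtK \cdot \wtK'$-equivariantly with the space of $\wtK$-harmonics $\wtcH$. Taking $K'$-invariants then gives $\theta(\sigma')|_{\wtK} \cong (\wtcH \otimes \sigma'^*)^{\wtK'}$, which is the claimed formula.

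More carefully, I would phrase the argument at the graded level to make the harmonic decomposition precise and to absorb the twist by $\varsigma$. Using \Cref{L3}, in the stable range we have $\eta_0 \colon (A \otimes_{\cS(\fpp')} \bC[W])^{K'_\bC} \xrightarrow{\sim} B$, where here $A = \varsigma|_{\wtK'} \otimes \Gr V_{\sigma'^*} = \varsigma|_{\wtK'} \otimes \sigma'^*$ (concentrated in degree zero, since $\sigma'^*$ is finite-dimensional with trivial $\fgg'$-action so its good filtration is trivial) and $B = \varsigma^*|_{\wtK} \otimes \Gr V_{\theta(\sigma')}$. Thus $\Gr\theta(\sigma') \cong \varsigma|_{\wtK} \otimes (\sigma'^* \otimes \bC[W])^{K'_\bC}$ as $(\cS(\fpp), K_\bC)$-modules. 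Since $\sigma'^*$ carries the trivial $K'_\bC$-action on the $G'$-factor (it is genuine but its restriction to $\fgg'$, hence essentially to $K'_\bC$ up to the central character recorded by $\varsigma$), the $K'_\bC$-invariants pick out exactly the $\sigma'^*$-isotypic part of $\bC[W]$ under the relevant $\wtK'$-action, and $\bC[W] \cong \cS(\fpp') \otimes \calH$ where $\calH$ is the space of $K_\bC$-harmonics; taking $\fpp'$-coinvariants (equivalently $\cS(\fpp')$-tensoring over the augmentation, which is what the stable-range map does after the analysis in \cite{LM}) leaves $\calH$. Passing back from the graded picture to $\wtK$-types — which is legitimate since $\Gr$ preserves $\wtK$-multiplicities — yields $\theta(\sigma')|_{\wtK} = (\wtcH \otimes \sigma'^*)^{\wtK'}$.

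Since the statement is explicitly flagged as well known (references \cite{Lo}, \cite{Zhu}, and Propositions 2.2 and 3.2 of \cite{LM}), the cleanest write-up is simply to cite those sources; but if a self-contained argument is wanted, the route above via \Cref{Prop:Theta} and \Cref{L3} is the natural one and requires only the classical harmonic (separation-of-variables) decomposition of the Fock space for a dual pair together with the bookkeeping of the oscillator central character via $\varsigma$. The main obstacle is purely notational: keeping straight which group's harmonics appear (one takes $\wtK$-harmonics and then $\wtK'$-invariants, a small asymmetry that is easy to transpose) and correctly tracking the genuine characters and the $\varsigma$-twist so that the one-dimensionality of $\sigma'^*$ really does collapse the tensor product to a plain restriction. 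No deep new input is needed beyond what is already assembled in \Cref{sec:fil}.
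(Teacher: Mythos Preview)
The paper gives no proof; it cites \cite{Lo}, \cite{Zhu}, and \cite{LM} and states the result without argument, exactly as you anticipate, so your recommended write-up already matches the paper.

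Your optional self-contained sketch is sound in outline, with one point to tighten. When you write ``$\sY \cong \cS(\fpp') \otimes \wtcH$ as $\cS(\fpp')$-modules,'' note that $\fpp'$ does \emph{not} act on $\sY$ by pure multiplication: by the primed analogue of \eqref{eq:8}, each $X \in \fpp'$ splits as $X_+ + X_-$ with $X_+ \in {\frakm'}^{(2,0)}$ (degree $+2$) and $X_- \in {\frakm'}^{(0,2)}$ (degree $-2$). The honest separation of variables is $\sY \cong \cS({\frakm'}^{(2,0)}) \otimes \wtcH$, and the identification $\sY/\fpp'\sY \cong \wtcH$ needs a short extra step: surjectivity by induction on degree using ${\frakm'}^{(2,0)} \subset \fpp' + {\frakm'}^{(0,2)}$ and ${\frakm'}^{(0,2)}\wtcH = 0$, then injectivity by comparing $\wtK$-multiplicities with the graded quotient $\bC[W]/\psi^*(\fpp')\bC[W] \cong \calH$. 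Your second route via \Cref{L3} sidesteps this by working at the graded level, but beware of apparent circularity: the paper remarks immediately after the proposition that \Cref{L3} in this special case \emph{follows from} \Cref{T22}. If you go that way, say explicitly that you are invoking the independent proof of \Cref{L3} in \cite{LM}, or---cleaner---appeal directly to the stable-range separation of variables for $\bC[W]$, which is the real input behind both statements.
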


\subsection{} We refer to \Cref{L3}. If we set $\rho' = \sigma'$ to be
the genuine character of~$\wtG'$, then $A = \varsigma|_{\wtK'} \otimes
\sigma'^*$ is a one dimensional trivial $\cS(\fpp')$-module. As a
representation of $K' \cong \rU(n)$, $A = \det^{\frac{p-q}{2}}$.
Let $B
= \varsigma^*|_{\wtK} \otimes \Gr(\theta(\sigma'))$.  We note that in
this special case, \Cref{L3} is a direct consequence of \Cref{T22}.

We recall the moment maps $\phi : W \rightarrow \fpp^*$ and $\psi : W
\rightarrow \fpp'^*$ in \eqref{eq2}.  We set $\bcN = \psi^{-1}(0')$
and it is called the {\it null cone} in $W$. Let $\cO =\cO_{p,q} =
\theta(0'; G', G)$ be the $K_\bC$-orbit in~\eqref{eqn7}. Then $\cN =
\phi^{-1}(\cO)$ is an open $K_\bC$-orbit in $\bcN$ which we call the
open null cone.  Furthermore, $\psi^*(\fpp')\bC[W]$ is precisely the
radical ideal $I(\bcN)$ of $\bcN$ in $\bC[W]$ (see \cite{GW,H3}). Let $I(\bcO)$ be the
radical ideal of $\bcO$ in $\cS(\fpp)$. We state a corollary of
Proposition \ref{L3}.

\begin{cor}\label{cor:iso.c}
We have an $(\cS(\fpp),K_\bC)$-module isomorphism
\[ 
(\bC[\bcN] \otimes A)^{K'_\bC}\cong B.
\]
Furthermore $I(\bcO) \subset \Ann_{\cS(\fpp)} B$ and $B$ is a
$(\bC[\bcO],K_\bC)$-module.
\end{cor}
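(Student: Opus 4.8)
The plan is to deduce the corollary directly from Proposition~\ref{L3} together with the geometric facts recalled just before the statement. First I would apply Proposition~\ref{L3} with $\rho' = \sigma'$: it gives an $(\cS(\fpp),K_\bC)$-module isomorphism $\eta_0\colon (A \otimes_{\cS(\fpp')} \bC[W])^{K'_\bC} \xrightarrow{\sim} B$, where $A = \varsigma|_{\wtK'} \otimes \sigma'^*$. Since $\sigma'|_{\fgg'}$ is trivial and $A$ is one dimensional, the $\cS(\fpp')$-module structure on $A$ factors through the augmentation $\cS(\fpp') \to \bC$; equivalently $\fpp'$ annihilates $A$. Therefore $A \otimes_{\cS(\fpp')} \bC[W] = A \otimes_\bC \bigl(\bC[W]/\psi^*(\fpp')\bC[W]\bigr)$. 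By the cited fact (\cite{GW,H3}) that $\psi^*(\fpp')\bC[W] = I(\bcN)$ is the radical ideal of the null cone, we get $\bC[W]/\psi^*(\fpp')\bC[W] = \bC[\bcN]$, and hence $A \otimes_{\cS(\fpp')} \bC[W] \cong \bC[\bcN] \otimes A$ as $(\cS(\fpp),K_\bC)$-modules (the $K_\bC$- and $\cS(\fpp)$-actions being carried entirely on the $\bC[W]$ factor, with $A$ contributing only a $K_\bC$-twist via the $K'_\bC$-structure that survives after taking invariants). Taking $K'_\bC$-invariants on both sides yields $(\bC[\bcN]\otimes A)^{K'_\bC} \cong B$, which is the first assertion.

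For the second part, I would argue that $I(\bcO)$ annihilates $B$ by transporting the question through the isomorphism to $(\bC[\bcN]\otimes A)^{K'_\bC}$. The key geometric input is that $\cO = \cO_{p,q} = \theta(0';G',G) = \phi(\psi^{-1}(0')) = \phi(\bcN)$, so the image of $\bcN$ under the moment map $\phi$ is (dense in) $\bcO$; dually, the comorphism $\phi^* \colon \cS(\fpp) = \bC[\fpp^*] \to \bC[W]$ sends $I(\bcO)$ into the ideal of functions vanishing on $\overline{\phi(\bcN)} = \bcO$, hence into $I(\bcN)$, i.e.\ $\phi^*(I(\bcO)) \subseteq I(\bcN)$. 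Since the $\cS(\fpp)$-action on $\bC[\bcN]\otimes A$ is by multiplication by $\phi^*$-images on the $\bC[\bcN]$ factor, any $f \in I(\bcO)$ acts by multiplication by $\phi^*(f) \bmod I(\bcN) = 0$. Thus $I(\bcO) \subseteq \Ann_{\cS(\fpp)}(\bC[\bcN]\otimes A)^{K'_\bC} = \Ann_{\cS(\fpp)} B$, and consequently the $\cS(\fpp)$-action on $B$ descends to an action of $\cS(\fpp)/I(\bcO) = \bC[\bcO]$, making $B$ a $(\bC[\bcO],K_\bC)$-module.

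The step I expect to require the most care is verifying that $\phi^*(I(\bcO)) \subseteq I(\bcN)$, or equivalently that $\overline{\phi(\bcN)} = \bcO$. This is exactly the content of the identity $\cO_{p,q} = \theta(0';G',G)$ recorded in \eqref{eqn7} together with the definition $\theta(S') = \phi(\psi^{-1}(S'))$ in \Cref{S14}, applied to $S' = \{0'\}$; so it is available, but one should be slightly careful that $\theta(0')$ is by definition the \emph{closure} of the orbit $\cO_{p,q}$, matching $\overline{\phi(\bcN)}$ rather than $\phi(\bcN)$ itself. Everything else is formal: the triviality of the $\fpp'$-action on $A$, the identification of $\bC[\bcN]$ as the quotient of $\bC[W]$ by $\psi^*(\fpp')\bC[W]$, and the compatibility of the $K_\bC$- and $\cS(\fpp)$-actions across $\eta_0$, which is already built into the statement of Proposition~\ref{L3}.
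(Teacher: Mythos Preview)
Your proof is correct and follows essentially the same route as the paper's own argument: identify $A\otimes_{\cS(\fpp')}\bC[W]$ with $\bC[\bcN]\otimes A$ using that $A$ is the trivial $\cS(\fpp')$-module and $\psi^*(\fpp')\bC[W]=I(\bcN)$, then invoke Proposition~\ref{L3}; for the second assertion use $\phi^*(I(\bcO))\subseteq I(\bcN)$. Your write-up is simply more expansive than the paper's four-line proof, and your closing caution about closures is unnecessary here since one only needs the containment $\phi(\bcN)\subseteq\bcO$, not the equality.
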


\begin{proof}
  If we regard $A$ as the trivial $\cS(\frakp')$-module, then
  $\bC[\bcN] = \bC[W]/I(\bcN) =$ \linebreak $\bC[W]
  \otimes_{\cS(\frakp')} A$.  The identity in the corollary follows
  from Proposition \ref{L3}. We have $\phi^*(I(\bcO)) \subseteq
  I(\bcN)$ so $I(\bcO) \subset \Ann_{\cS(\fpp)} B$.
\end{proof}

\begin{prop} \label{prop:iso.cN}
The natural inclusion $\bC[\bcN]\to \bC[\cN]$ induces an isomorphism of
$(\cS(\fpp), K_\bC)$-modules
\[
B  \cong (\bC[\cN] \otimes A)^{K'_\bC}.
\]
\end{prop}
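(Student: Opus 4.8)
The plan is to show that the natural inclusion $\bcN \hookrightarrow$ no, rather the inclusion of the open orbit $\cN \hookrightarrow \bcN$ induces, after taking $A$-twists and $K'_\bC$-invariants, an isomorphism $B \cong (\bC[\cN]\otimes A)^{K'_\bC}$. By \Cref{cor:iso.c} we already have $B \cong (\bC[\bcN]\otimes A)^{K'_\bC}$, so it suffices to prove that the restriction map
\[
(\bC[\bcN]\otimes A)^{K'_\bC} \longrightarrow (\bC[\cN]\otimes A)^{K'_\bC}
\]
is an isomorphism of $(\cS(\fpp),K_\bC)$-modules. Since $\cN$ is an open dense $K_\bC$-orbit in $\bcN$ (in fact $\cN = \phi^{-1}(\cO_{p,q})$ and the complement $\bcN\setminus\cN$ is $\psi^{-1}(0')$ minus an open orbit, hence a proper closed $K_\bC\times K'_\bC$-invariant subvariety), the restriction $\bC[\bcN]\to\bC[\cN]$ is injective; twisting by the finite-dimensional $A$ and taking the exact functor of $K'_\bC$-invariants (reductive group, characteristic $0$) preserves injectivity. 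So the real content is surjectivity: every $K'_\bC$-invariant section of $\bC[\cN]\otimes A$ extends to $\bcN$.

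For surjectivity I would argue as follows. First I would compute the codimension of the boundary $\partial\cN := \bcN\setminus\cN$ in $\bcN$. Using the explicit description of $\bcN = \psi^{-1}(0')$ as the variety of $W = \mathrm{Hom}(\bC^{2n}, \bC^{p+q})$-type matrices annihilated by the $\spl(2n)$-moment map (equivalently, $n$-tuples of isotropic mutually-orthogonal vectors, with appropriate $\rO(p,q)$ versus $\Sp(2n)$ bookkeeping), together with the stable-range hypothesis $\min(p,q)\ge 2n$, $\max(p,q)>2n$, one sees that $\bcN$ is irreducible and normal (it is a classical nullcone / determinantal-type variety; cf.\ \cite{GW,H3}) and that $\partial\cN$ has codimension $\ge 2$ in $\bcN$. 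Then normality of $\bcN$ plus codimension $\ge 2$ of the removed set gives that restriction $\bC[\bcN]\xrightarrow{\sim}\bC[\cN]$ is already an \emph{isomorphism} by the algebraic Hartogs/$S_2$ property; twisting by $A$ and taking $K'_\bC$-invariants then yields the claimed isomorphism immediately, and $K_\bC$-equivariance and $\cS(\fpp)$-linearity are automatic since all maps in sight are.

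The main obstacle I expect is the codimension and normality bookkeeping for $\bcN$ in Case I versus Case II ranges — i.e.\ verifying $\codim_{\bcN}\partial\cN\ge 2$ and that $\bcN$ is normal under exactly the stable-range hypothesis in force here. One has to be careful that $\bcN$ really is irreducible with $\cN$ dense (so that $\theta(0';G',G)$ is a single orbit, which is why \eqref{eqn7} and \cite{Ohta,DKP} were invoked earlier) and that no extra components or high-codimension-but-codimension-one strata appear. If normality of $\bcN$ is not readily available, an alternative is: resolve by noting $\bC[\cN]\otimes A$ has a $K'_\bC$-invariant section extending to the normalization $\widetilde{\bcN}$, and that the $K'_\bC$-invariants $(\bC[\widetilde{\bcN}]\otimes A)^{K'_\bC}$ coincide with $(\bC[\bcN]\otimes A)^{K'_\bC}$ because the normalization map is finite and an isomorphism over the smooth locus containing $\cN$; but the cleanest route remains the direct normality argument, so I would first try to cite or prove that $\bcN$ is a normal variety and then the whole statement drops out of Hartogs.
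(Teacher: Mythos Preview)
Your argument via normality of $\bcN$ plus $\codim_{\bcN}(\bcN\setminus\cN)\geq 2$ and Hartogs is exactly what the paper records as a remark \emph{after} its proof, and it indeed disposes of the case $\min(p,q)>2n$ immediately (the paper cites \cite{Ko,NOZ} for normality in that range). The gap is the boundary case $\min(p,q)=2n$, which is permitted by the stable-range hypothesis \eqref{eq:range.C}: there the cited normality results do not apply, and you have not established either that $\bcN$ is normal or that the boundary has codimension $\geq 2$. The paper's own proof does not attempt this at all; it simply reduces to showing $(\bC[\bcN]\otimes A)^{K'_\bC}\cong(\bC[\cN]\otimes A)^{K'_\bC}$ as admissible $K_\bC$-modules and cites Yang's thesis \cite{Y} for a direct $K$-type verification of that isomorphism, bypassing any geometry of $\bcN$ in the boundary case.

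Your fallback via the normalization $\widetilde{\bcN}$ does not close the gap either. Even granting that $K'_\bC$-invariant sections on $\cN$ extend to $\widetilde{\bcN}$ (which would still require the codimension bound in $\widetilde{\bcN}$), the assertion that $(\bC[\widetilde{\bcN}]\otimes A)^{K'_\bC}$ agrees with $(\bC[\bcN]\otimes A)^{K'_\bC}$ ``because the normalization map is finite and an isomorphism over the smooth locus'' is exactly what fails when $\bcN$ is not normal: the cokernel of $\bC[\bcN]\hookrightarrow\bC[\widetilde{\bcN}]$ is supported on the singular locus, but there is no reason for its $A$-twisted $K'_\bC$-invariants to vanish. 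So in the boundary case you would still need an independent argument---either a $K$-type computation as in \cite{Y}, or a genuine proof of normality/codimension for $\min(p,q)=2n$.
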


\begin{proof}
  It suffices to prove that $(\bC[\bcN] \otimes A)^{K'_\bC}\cong
  (\bC[\cN] \otimes A)^{K'_\bC}$ as admissible $K_\bC$-modules.  This
  is verified in \cite{Y}.
\end{proof}

We remark that if $\min(p,q) > 2n$, then the above lemma follows
immediately from the fact that $\bC[\bcN] \cong \bC[\cN]$.  Indeed, in
these cases, $\bcN$ is a normal variety by~\cite{Ko,NOZ} and $\partial
\cN = \bcN - \cN$ has codimension at least~$2$ in~$\bcN$.

\subsection{}
Let $\sB$ be the coherent sheaf associated to the module $B$ on
$\fpp^*$.  Clearly $\AV(\theta(\sigma'))= \Supp\sB \subseteq
\phi(\bcN) = \bcO$.  In order to calculate the isotropy representation
of $\Gr \theta(\sigma')$, we first recall a special case of Corollary
A.5 in~\cite{LM}.

\begin{lemma}
  Fix a $w\in \cN$ such that $\phi(w) = x \in \cO$. Let $K_x$ be the
  stabilizer of $x$ in~$K_\bC$. Then the fiber $F_x = \phi^{-1}(x)
  \cap \bcN$ in $\bcN$ is a single $K_\bC'$-orbit where $K_{\bC}'$
  acts freely. Moreover, there is a (unique) surjective homomorphism
  $\alpha\colon K_x \twoheadrightarrow K'_\bC$ such that
\[
S_w = K_x\times_\alpha K'_\bC = \Set{(k,\alpha(x))\in
  K_x\times K'_\bC}. \qed
\] 
\end{lemma}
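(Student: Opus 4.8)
The statement is essentially a concrete description of the fibers of the moment map $\phi\colon W \to \fpp^*$ over the open orbit $\cN$, together with the structure of the isotropy group $S_w = \Stab_{K_\bC \times K'_\bC}(w)$. Since $(G^{p,q},G')$ is in the stable range with $G' = \Sp(2n,\bR)$ the smaller member, the key point is that $n$ is small compared to $p$ and $q$, so the symplectic group $K'_\bC = \GL(n,\bC)$ acts on the relevant space of "frames" with generic stabilizer trivial. Concretely, I would first set up coordinates: $W \cong \Hom(\bC^{2n}, \bC^{p+q})$-type data, with $\phi$ sending such a homomorphism to the associated $\fpp^*$-element (a pair of contractions valued in $\Hom(\bC^p,\bC^q)$-like spaces under the Kostant--Sekiguchi picture), and $\psi$ sending it to the associated element of $\fpp'^*$. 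The null cone $\bcN = \psi^{-1}(0')$ consists of those homomorphisms that are "isotropic" for the symplectic form, and $\cN = \phi^{-1}(\cO_{p,q})$ picks out those of maximal rank. I would then observe that for $w \in \cN$ the image of $w$ is an $n$-dimensional isotropic subspace decorated with extra linear-algebra data, and that $K'_\bC$ acts simply transitively on the set of $w$ inducing a fixed such decorated subspace — this is the assertion that $F_x = \phi^{-1}(x)\cap\bcN$ is a single free $K'_\bC$-orbit.

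**Key steps, in order.** (1) Fix the linear-algebra model of $W$, $\phi$, $\psi$ adapted to the dual pair $(\rO(p,q),\Sp(2n,\bR))$ and to the Kostant--Sekiguchi normal form of $x \in \cO_{p,q}$ using the explicit partition $2_+^n 2_-^n 1_+^{p-2n} 1_-^{q-2n}$ from \eqref{eqn7}; in this normal form $x$ "is" a pair of nilpotent-type maps whose ranks are exactly $n$. (2) Show $F_x \neq \emptyset$ (this is the statement $x \in \cO = \theta(0';G',G)$, i.e. $x \in \phi(\psi^{-1}(0'))$, already known from Section~\ref{S14}). (3) Show $K'_\bC$ acts transitively on $F_x$: given two points $w_1, w_2$ of $\bcN$ over $x$, they determine the same isotropic image subspace and compatible data, and the unique $\GL(n,\bC)$ intertwining the two "coordinate frames" preserves the symplectic isotropy condition, hence lies in $K'_\bC$. (4) Show the action is free: a $g \in K'_\bC$ fixing $w \in \cN$ must act trivially on a frame that spans an $n$-dimensional space injected by $w$, so $g = 1$; freeness is where maximality of rank (hence the open-orbit hypothesis $w \in \cN$ rather than merely $w \in \bcN$) is used. (5) Deduce the homomorphism $\alpha\colon K_x \twoheadrightarrow K'_\bC$: the group $K_\bC \times K'_\bC$ acts on $W$ and stabilizes $\cN$; the $K_\bC$-stabilizer of $x$ acts on the free homogeneous $K'_\bC$-space $F_x$, and since $K'_\bC$ acts simply transitively, choosing the base point $w \in F_x$ identifies this action with a homomorphism $\alpha\colon K_x \to K'_\bC$; surjectivity follows because the diagonal-type elements of $K_x$ (coming from the $\GL(n,\bC)$-block of the parabolic $P_{p,n}\times P_{q,n}$ description of $K_x$ in Theorem~\ref{thm:ACchar}(ii)) already map onto $K'_\bC$. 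Then $S_w = \Stab_{K_\bC\times K'_\bC}(w)$ consists of pairs $(k,k')$ with $k \in K_x$ and $k' = \alpha(k)^{-1}$ (up to the harmless convention in the displayed formula), i.e. $S_w = K_x \times_\alpha K'_\bC$ as claimed, and the uniqueness of $\alpha$ is automatic from simple transitivity.

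**Main obstacle.** The routine part is the transitivity/freeness bookkeeping once the right coordinates are chosen; the genuinely delicate point is verifying that for $w$ in the \emph{open} orbit $\cN$ (as opposed to a smaller orbit in $\bcN$) the $K'_\bC$-action on the fiber is \emph{free} and not merely transitive with finite stabilizers, and dually that $\alpha$ is \emph{surjective} onto all of $K'_\bC = \GL(n,\bC)$ rather than landing in a proper subgroup — both hinge on the rank of $w$ being exactly $n$, which is precisely the characterization of $\cN$ via the partition of $\cO_{p,q}$. A secondary subtlety is matching the group $K_x$ here with the explicit parabolic-fiber-product description $\{(p_1,p_2): \beta_{p,n}(p_1)=\beta_{q,n}(p_2)\}$ of Theorem~\ref{thm:ACchar}(ii) and checking that $\alpha$ is literally the common Levi-quotient map $p_1 \mapsto \beta_{p,n}(p_1)$; this is where one must be careful that the Kostant--Sekiguchi identification $\fpp^* \simeq \fpp$ and the choice of normal form are consistent. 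I expect most of the work to be absorbed by citing Corollary~A.5 of \cite{LM} (as the lemma statement already advertises), so the proof in the paper is likely short, with the above steps serving as the conceptual skeleton.
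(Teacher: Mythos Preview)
Your expectation in the final paragraph is exactly right: the paper gives no proof at all for this lemma, simply stating it as a special case of Corollary~A.5 in \cite{LM} and placing a \qed\ at the end of the statement. Your steps (1)--(5) are a correct unpacking of what that cited result amounts to in this concrete setting, and your identification of $\alpha$ with the common Levi quotient $\beta_{p,n}$ matches the explicit description the paper later uses in \Cref{thm:ACchar}(ii)--(iii).
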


For $k \in K_x$, $\alpha(k)$ is the unique element in $K_\bC'$ such
that $(k,\alpha(k))\cdot w = w$.
 
The above definitions are summarized in the
diagram~\eqref{fig:orbchar1} below.
\begin{figure}[h]
  \centering
  \begin{equation}\label{fig:orbchar1}
\xymatrix{
(K_x\times K'_\bC)/S_w \ar@{=}[r]&F_x \ar@{^(->}[r]\ar[d] & \cN  \ar[d]\ar@{^(->}[r]&\bcN
\ar[d]^{\phi|_{\bcN}}\ar@{^(->}[r]&  W\ar[d]^{\phi}\\
& \set{ x } \ar@{^(->}[r]& \cO
\ar@{^(->}[r]^{i_{\cO}} & \overline{\cO}\ar@{^(->}[r] & \fpp^* 
}
  \end{equation}
\end{figure}

\begin{lemma} \label{L46} We assume the notation in the above
  diagram~\eqref{fig:orbchar1}. We also recall the one dimensional
  character $A = \varsigma|_{\wtK_\bC'} \otimes \sigma'^*$ of
  $K_\bC'$. Then we have
\[
i^*_x\sB \cong A \circ \alpha \neq 0
\] 
as a one dimensional character of $K_x$.  Therefore the isotropy
representation $\chi_x$ of $\Gr \theta(\sigma')$ at $x$ is
\[
\chi_x = \varsigma|_{\wtK} \otimes ((\varsigma|_{\wtK'}\otimes
\sigma'^*)\circ \alpha). 
\]
\end{lemma}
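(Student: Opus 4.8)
The plan is to recognise $\sB$, restricted to the open $K_\bC$-orbit $\cO\subseteq\bcO$, as the geometric line bundle associated by the character $A$ to a $K'_\bC$-torsor lying over $\cO$, and then to compute its fibre over $x$. By Corollary \ref{cor:iso.c} together with Proposition \ref{prop:iso.cN} we have a $K_\bC$-equivariant isomorphism of $\bC[\bcO]$-modules $B\cong(\bC[\cN]\otimes A)^{K'_\bC}$, where $A=\varsigma|_{\wtK_\bC'}\otimes\sigma'^*$ is now regarded merely as a one dimensional character of $K'_\bC$ (its $\cS(\fpp')$-module structure being trivial). By the lemma preceding diagram~\eqref{fig:orbchar1}, over each point of $\cO$ the fibre $F_x$ of $\phi|_{\bcN}$ is a single $K'_\bC$-orbit on which $K'_\bC$ acts freely, and $F_x\subseteq\cN=\phi^{-1}(\cO)$; since $\cN$ and $\cO$ are smooth (being $K_\bC$-orbits), $\phi|_\cN$ is $K_\bC$-equivariant, and the $K_\bC$- and $K'_\bC$-actions commute, $\phi|_\cN\colon\cN\to\cO$ is a $K_\bC$-equivariant principal $K'_\bC$-bundle, Zariski-locally trivial because $K'_\bC\cong\GL(n,\bC)$ is a special group.

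By faithfully flat descent along this torsor (using also that $\GL(n,\bC)$ is linearly reductive), $\bC[\cN]^{K'_\bC}=\bC[\cO]$ and $(\bC[\cN]\otimes A)^{K'_\bC}$ is the module of sections over $\cO$ of the $K_\bC$-equivariant invertible sheaf $\calL:=\cN\times_{K'_\bC}A$; hence $\sB|_\cO\cong\calL$ is locally free of rank one. In particular $\Supp\sB=\bcO$ (recovering $\AV(\theta(\sigma'))=\bcO$) and $\sB|_\cO$ is reduced, so $\sB$ is generically reduced on $\bcO$ and Vogan's auxiliary filtration (Lemma~2.11 of \cite{VoAss}) is unnecessary: the isotropy datum at $x$ is simply $i_x^*\sB$. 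As a $K_\bC$-equivariant line bundle on $\cO\cong K_\bC/K_x$, $\calL$ is determined by the one dimensional $K_x$-module $i_x^*\calL$. Fixing $w\in\cN$ with $\phi(w)=x$, the assignment $a\mapsto[w,a]$ identifies $i_x^*\calL$ with $A$; for $k\in K_x$ the defining relation $(k,\alpha(k))\cdot w=w$ gives $k\cdot w=\alpha(k)^{-1}\cdot w$, whence $k\cdot[w,a]=[k\cdot w,a]=[w,A(\alpha(k))\,a]$. Thus $i_x^*\sB\cong A\circ\alpha$ as characters of $K_x$, and this is nonzero since $\alpha$ is surjective and $A$ is a nonzero character (equivalently, an explicit nonvanishing section is produced because $\bC[\cN]$ separates the point $w$).

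Finally, $B=\varsigma^*|_{\wtK}\otimes\Gr\theta(\sigma')$, where $\varsigma^*|_{\wtK}$ is a one dimensional $\wtK$-character on which $\fpp$ acts by zero, so the coherent sheaf of $\Gr\theta(\sigma')$ is $\sB$ with its $K_\bC$-equivariant structure twisted by $\varsigma|_{\wtK_\bC}$. Taking fibres at $x$ and using the previous paragraph,
\[
\chi_x=\varsigma|_{\wtK}\big|_{K_x}\otimes i_x^*\sB=\varsigma|_{\wtK}\otimes(A\circ\alpha)=\varsigma|_{\wtK}\otimes\big((\varsigma|_{\wtK'}\otimes\sigma'^*)\circ\alpha\big),
\]
as asserted. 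I expect the substantive work to be the first two steps: one must genuinely verify that $\cN\to\cO$ is a Zariski-locally trivial $K'_\bC$-torsor (this is where the preceding lemma and the specialness of $\GL(n,\bC)$ enter) and that the algebraic construction $(\bC[\cN]\otimes A)^{K'_\bC}$ computes the geometric associated bundle $\cN\times_{K'_\bC}A$ — routine descent, but it is the real input. The only other delicate point is keeping the variance of $\alpha$ straight in the fibre computation, i.e. that the character is $A\circ\alpha$ and not $A^{-1}\circ\alpha$; this is forced by the normalisation $(k,\alpha(k))\cdot w=w$ that defines $\alpha$.
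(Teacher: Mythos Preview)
Your argument is correct, and at the level of the actual fibre computation it coincides with the paper's: both reduce to identifying $(\bC[F_x]\otimes A)^{K'_\bC}$ with $A\circ\alpha$ via the description $F_x\cong(K_x\times K'_\bC)/S_w$. The difference is one of packaging. The paper works pointwise and purely algebraically: it writes $i_x^*\sB=B/\cI_x B$, uses $B=(\bC[\bcN]\otimes A)^{K'_\bC}$ from Corollary~\ref{cor:iso.c}, commutes the quotient past $K'_\bC$-invariants by reductivity, and invokes generic reducedness (characteristic zero, $x$ in the open orbit) to identify the scheme-theoretic fibre with $F_x$. No torsor language, no descent, no specialness of $\GL(n,\bC)$ --- just the homogeneous-space formula $\bC[F_x]=\Ind_{S_w}^{K_x\times K'_\bC}\bC$ and Frobenius reciprocity.

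Your route is more global: you first show that $\phi|_\cN\colon\cN\to\cO$ is a Zariski-locally trivial $K'_\bC$-torsor and that $\sB|_\cO$ is the associated line bundle $\calL=\cN\times_{K'_\bC}A$, and only then read off the fibre. This costs you the extra verification that localisation on $\bcO$ commutes with taking $K'_\bC$-invariants (fine, since $K'_\bC$ is reductive and $\cI_x$, or more generally the localising elements, are $K'_\bC$-invariant), and the appeal to specialness of $\GL(n,\bC)$. For Lemma~\ref{L46} alone this is more than needed --- the fibre computation only uses that $F_x$ is a single free $K'_\bC$-orbit, already supplied by the preceding lemma. What your approach buys is that you have simultaneously established $i_\cO^*\sB\cong\sL$ as a line bundle on $\cO$, which the paper only deduces afterwards (via \cite{CPS}) in the proof of Theorem~\ref{thm:ACchar}; so you have effectively merged the lemma with part of that theorem.
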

\begin{proof}
  Let $\cI_x$ denote the maximal ideal of $x$.  Since $x$ generates an
  open dense orbit $\cO$ in $\overline{\cO}$ and $\phi\colon
  \bcN\twoheadrightarrow \bcO$ is surjective, the scheme theoretic
  fiber $\Spec\left(\bC[\bcN]/\cI_x\bC[\bcN]\right)$ is reduced and
  equals to $F_x$. Then the isotropy representation
\begin{eqnarray*}
i_x^* \sB & = & B/\cI_x B = \left(\bC[\bcN]\otimes
    \sigma'\right)^{K'_\bC}/\left(\cI_x
    \left(\bC[\bcN]\otimes A\right)^{K'_\bC}\right) \\
& = & \left(\bC[\bcN]/\cI_x \bC[\bcN] \otimes
    A\right)^{K'_\bC}
  \quad \text{(by the exactness of taking $K_\bC$-invariants)}\\
& = & \left(\bC[F_x]\otimes \sigma'^*\right)^{K'_\bC}\\
& = & \left(\Ind_{K_x\times_\alpha K'_\bC}^{K_x\times K'_\bC} \bC
    \otimes A\right)^{K'_\bC}
  \quad \text{(because $F_x = (K_x\times K'_\bC)/S_w$)}\\
& \cong & A\circ \alpha.
\end{eqnarray*}
\end{proof}

\begin{proof}[Proof of \Cref{thm:ACchar}]
  By \Cref{cor:iso.c}, $\Supp(\sB)\subseteq \overline{\cO}$.  By
  \Cref{L46}, $\dim_\bC i_x^* \sB = 1$ so $x \in \Supp(\sB)$.  Hence
  $\Supp(B)=\overline{\cO}$. By definition $\AV(\theta(\sigma')) =
  \bcO$ and $\AC(\theta(\sigma')) = 1 \, [\bcO]$.

  Let $\sL$ be the locally free coherent sheaf on $K_\bC/K_x\cong \cO$
  associated with the $K_x$-module $A \circ \alpha$. By \cite{CPS}
  there is an equivalence of categories between certain
  $K_\bC$-equivalent quasi-coherent sheafs and rational
  $K_x$-modules. Applying this to \Cref{L46} gives $i_{\cO}^*\sB \cong
  \sL$. In particular $\sB(\cO) = \sL(\cO) = \Ind_{K_x}^{K_\bC} (A
  \circ \alpha)$.
  
  It remains to show that $B = \sB(\bcO) \hookrightarrow \sB(\cO)$ is
  an isomorphism. We recall that $\phi^{-1}(\cO) = \cN$. Then $B =
  (\bC[\cN] \otimes A)^{K'_\bC} = \sB(\cO)$ by \Cref{prop:iso.cN}.  This
  completes the proof of the theorem.
\end{proof}

\section{Unitary lowest weight modules and its theta lifts}

Throughout this section, we let $(G,G')$ be the dual pair $(\rO(p,q),
\spl(2n,\bR))$ as before. We will discuss some unitary lowest weight
modules of $\wtG'$ and their theta lifts to $\wtG$.

\subsection{} \label{S34a} 
The group~$G'$ is Hermitian symmetric so its has complexified
Cartan decomposition $\fgg' =\fkk'\oplus \fpp'$ and $\fpp' =
\fpp'^+\oplus \fpp'^-$ where $\fpp'^\pm$ are $K_\bC$-invariant abelian
Lie subalgebras of $\fgg'$.

Let $K^t = G^{t,0} \cong \rO(t)$. Let $\sY_2$ be the Fock model of the
oscillator representation for compact dual pair $(K^t,G')$.  Let $\mu
\in \cR(\wtK^t;\sY_2)$. We define
\[
L(\mu') = \theta(\mu) = (\sY_2 \otimes \mu^*)^{K^t}
\]
and $L(\mu')^* = (\sY_2^*\otimes \mu)^{K^t}$. Here $L(\mu')$ is a
lowest weight module with lowest $\wtK'$-type $\mu'$, and $L(\mu')^*$
is its contragredient module.

It is a well known result of \cite{EHW} and \cite{DER} that
all unitarizable lowest weight modules of $G'$ up to unitary
characters are obtained from compact dual pair correspondences. 

Let $W_2$ be the complex space with Hermitian form compatible with
$(G^{0,t},G')$. Its Fock model is $\bC[W_2] \cong
\sY_2^*$. The $\fpp'^-$ action on $\sY_2^*$ is by multiplying degree
two $K_\bC^t$-invariant polynomials. It gives an algebra homomorphism
$\psi_2^*\colon \cS(\fpp'^-) \to \bC[W_2]^{K^t}$ which in turn defines
the moment map
\[
{\psi_2} \colon W_2\to (\fpp'^-)^* = \Spec \, \cS(\fpp'^-).
\]

\newcommand{\redG}{{\red G}}
\newcommand{\bfK}{{K_H}}

\subsection{Associated cycles}\label{sec:AC.L}
In \Cref{sec:fil}, we have a filtration on $L(\mu')^*$ which gives a
graded module $\Gr(L(\mu')^*)$.  Despite the fact that the dual pair
is not in stable range, it is well known that \Cref{L3} extends to the
graded module (for example see \cite{KV} and \cite{Ya}) and we have
\[
B' := \varsigma_2^*|_{\wtK'}\otimes \Gr(L(\mu')^*) =
\varsigma_2^*|_{\wtK'} \otimes (\Gr \sY_2^* \otimes \mu)^{K^t} =
(\bC[W_2]\otimes \tau)^{K^t}
\]
where $\varsigma_2$ is the minimal $\widetilde{\rU}(W_2)$-type of
$\sY_2^*$ and
\begin{equation} \label{eq:def.tau}
\tau =\varsigma_2|_{\wtK^t} \otimes \mu
\end{equation}
We recall $\bcOp = \bcOpd = {\psi_2}(W_2)$ where $d = \min
\set{t,n}$. Hence the graded module $B'$ is a finitely generated
$\bC[\bcOp]$-module.

Let $x'\in \cO'$. We consider following diagram 
\[
\xymatrix{
{\psi_2}^{-1}(x')\ar[r]\ar[d]& W_2 \ar[d]\ar[dr]^{{\psi_2}}\\
\Set{ x' } \ar[r]^{i_{x'}}& \bcOp \ar[r]& (\fpp'^-)^*.
}
\]
Let $\sB'$ be the coherent sheaf on $\bcOp$ associated with the module
$B'$. Let $K_{x'}'$ be the stabilizer of $x'$ in $K_\bC'$.  Then the
isotropy representation of $\sB'$ and $\Gr(L(\mu')^*)$ at $x'$ are
\begin{equation} \label{eq17c} \chi_{x'} = i^*_{x'} \sB' =
  (\bC[{\psi_2}^{-1}(x')]\otimes \tau)^{K^t} \quad \text{and} \quad
  \tchi_{x'} = \varsigma_2|_{\wtK'} \otimes \chi_{x'}
\end{equation}
respectively.  The representation $\chi_{x'} = i^*_{x'}\sB'$ is
calculated in \cite{Ya}. We state the result for pair $(K^t,G') =
(\rO(t),\Sp(2n,\bR))$.

\begin{thm}[\cite{Ya}] \label{T52}
\begin{enumerate}[(i)]
\item The module $L(\mu') = \theta(\mu)$ is nonzero if and only if
  $\chi_{x'}$ is nonzero.

\item Suppose $t\leq n$. Then $K'_{x'} \cong (\rO(t,\bC)\times
  \GL(n-t,\bC)) \ltimes N$ where $N$ is a nilpotent subgroup.  The
  isotropy representation is $\chi_{x'} \cong \tau$ as an
  $\rO(t,\bC)$-module and the other subgroups of
  $K_{x'}'$ act trivially on it.

\item Suppose $t>n$, then we have $K'_{x'}
  \cong \rO(n,\bC)$.  The isotropy representation is $\chi_{x'}
  \cong \tau^{\rO(t-n)}$ as an $\rO(n,\bC)$-module.
\end{enumerate}
\end{thm}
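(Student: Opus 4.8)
The plan is to compute the fibers of the moment map $\psi_2$ over points of $\cO'_d$ and then apply formula \eqref{eq17c}. First I would fix the standard model of the compact dual pair $(\rO(t),\Sp(2n,\bR))$: the space $W_2$ is $M_{t\times n}(\bC)=\bC^t\otimes\bC^n$, with $K_\bC^t=\rO(t,\bC)$ acting by left multiplication, $K_\bC'=\GL(n,\bC)$ by right multiplication, and
\[
\psi_2(A)=A^{\mathrm t}A ,
\]
a symmetric $n\times n$ matrix, identified with a point of $(\fpp'^-)^*$; its entries are the basic $\rO(t)$-invariant quadratics on $W_2$ (first fundamental theorem). Since $\psi_2$ is $K_\bC^t$-invariant and equivariant for the $\GL(n,\bC)$-action $S\mapsto h^{\mathrm t}Sh$, its image is the variety of symmetric matrices of rank $\le d=\min\{t,n\}$, which is $\bcOpd=\psi_2(W_2)$, and $\cO'_d$ is the open orbit of rank exactly $d$. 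Exactly as in the proof of \Cref{L46}, because $x'\in\cO'_d$ lies in the open dense $\GL(n,\bC)$-orbit of $\bcOpd$, the scheme-theoretic fiber $\psi_2^{-1}(x')$ is reduced, so \eqref{eq17c} gives
\[
\chi_{x'}=i_{x'}^*\sB'=\bigl(\bC[\psi_2^{-1}(x')]\otimes\tau\bigr)^{K^t}.
\]

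Next I would identify $\psi_2^{-1}(x')$ as a variety together with its residual $K'_{x'}$-action. A point of the fiber is an $n$-tuple of vectors in $\bC^t$ with Gram matrix $x'$; such tuples exist since over $\bC$ a symmetric form of rank $d$ is realized inside $\bC^d\subseteq\bC^t$, and by Witt's extension theorem any two of them differ by an element of $\rO(t,\bC)$, unique modulo the pointwise stabilizer of the span $U$. Here $\dim U=d$ and $U$ is non-degenerate (its Gram matrix $x'$ has rank $d$), so: when $t\le n$ (hence $d=t$ and $U=\bC^t$) the fiber is a free $\rO(t,\bC)$-torsor; when $t>n$ (hence $d=n$) it is $\rO(t,\bC)/\rO(t-n,\bC)$. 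Separately, computing the stabilizer in $\GL(n,\bC)$ of the rank-$d$ symmetric matrix $x'$ gives $K'_{x'}\cong(\rO(t,\bC)\times\GL(n-t,\bC))\ltimes N$ when $t\le n$ (from its actions on $\bC^n/\ker x'$, on $\ker x'$, and unipotently) and $K'_{x'}\cong\rO(n,\bC)$ when $t>n$.

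Then I would run the Peter--Weyl computation. When $t\le n$, trivialise the torsor as $\psi_2^{-1}(x')\cong\rO(t,\bC)$ with $K_\bC^t$ acting by left translation and, through a surjection $\beta\colon K'_{x'}\twoheadrightarrow\rO(t,\bC)$ with kernel $\GL(n-t,\bC)\ltimes N$, with $K'_{x'}$ acting by right translation; then $\bigl(\bC[\rO(t,\bC)]\otimes\tau\bigr)^{\rO(t,\bC)}\cong\tau$, with the residual $\rO(t,\bC)\subset K'_{x'}$ acting through its module structure on $\tau$ and $\GL(n-t,\bC)\ltimes N$ trivially; this is (ii). When $t>n$, the same identification followed by taking the remaining $\rO(t-n,\bC)$-invariants (for $\rO(t-n,\bC)\subset K_\bC^t=\rO(t,\bC)$) gives $\chi_{x'}\cong\tau^{\rO(t-n,\bC)}$, with $K'_{x'}=\rO(n,\bC)$ acting as the complementary subgroup $\rO(n,\bC)\subset\rO(t,\bC)$; this is (iii). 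For (i): $L(\mu')=0\Rightarrow L(\mu')^*=0\Rightarrow B'=0\Rightarrow\chi_{x'}=0$; conversely, if $L(\mu')\ne0$ then $\mu$ occurs in $\sY_2|_{\wtK^t}$ and $\AV(L(\mu')^*)=\bcOpd$, so $\sB'$ has full support $\bcOpd$ and hence $\chi_{x'}=i_{x'}^*\sB'\ne0$ on the open orbit. (Alternatively, one checks directly from (ii)--(iii) that $\chi_{x'}\ne0$ exactly when $\mu$ occurs in $\sY_2|_{\wtK^t}$: automatic for $t\le n$, and equivalent to $\tau^{\rO(t-n,\bC)}\ne0$ for $t>n$ by the branching rule $\rO(t)\downarrow\rO(t-n)$.)

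The main obstacle I expect is the bookkeeping of the metaplectic double covers and the twist $\varsigma_2$: making precise in what sense the genuine $\wtK^t$-module $\tau=\varsigma_2|_{\wtK^t}\otimes\mu$ is "an $\rO(t,\bC)$-module", and identifying the $\rO(t,\bC)$ sitting inside $K'_{x'}$ with $K_\bC^t$ in a way compatible with the $\tau$-action rather than merely as abstract groups (the orbit combinatorics alone only gives the latter); and, closely related, pinning down the scheme structure of $\psi_2^{-1}(x')$ and the exactness that lets $i_{x'}^*$ commute with taking $K_\bC^t$-invariants. The purely geometric ingredients --- Witt's theorem, the stabilizer computations, and Peter--Weyl --- are routine.
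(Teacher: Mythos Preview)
The paper does not prove \Cref{T52}; it is quoted from Yamashita \cite{Ya} and stated without proof. So there is no ``paper's proof'' to compare against, and your proposal is in fact supplying an argument where the paper gives none.

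Your approach is correct and is essentially the natural one. The fiber description via Witt's theorem, the stabilizer computations for a rank-$d$ symmetric matrix under $g\mapsto g^{\mathrm t}x'g$, and the Peter--Weyl/Frobenius step are all standard and go through as you outline. One remark on the obstacle you flag: it is not really an obstacle. Since both $\mu$ and $\varsigma_2|_{\wtK^t}$ are genuine $\wtK^t$-modules, their tensor product $\tau$ is non-genuine, hence descends to $K^t$ and therefore extends to a rational $K^t_\bC=\rO(t,\bC)$-module; this is exactly why the paper introduces $\tau$ in \eqref{eq:def.tau} rather than working with $\mu$ directly. The identification of the $\rO(t,\bC)$-factor of $K'_{x'}$ with $K^t_\bC$ is canonical once you fix the base point $A_0\in\psi_2^{-1}(x')$: the map $\rO(t,\bC)\subset K'_{x'}\to K^t_\bC$ is determined by $A_0 h = \beta(h) A_0$, exactly parallel to the construction of $\alpha$ in \Cref{L46}. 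A minor caution: your Peter--Weyl step literally produces $\tau^*$ rather than $\tau$ as the right-$\rO(t,\bC)$-module, but irreducible $\rO(t,\bC)$-modules are self-dual, so the statement as written is correct.
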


The next theorem follows immediately from the definitions in \Cref{sec:def} 
and the above theorem on the isotropy representations.

\begin{thm}[\cite{NOT,Ya}]
We have $\AV(L(\mu')^*)= \bcOp_d$ and
\[
\AC(L(\mu')^*) = (\dim_{\bC}i^*_{x'}\sB') [\bcOp_d] = \begin{cases}
  (\dim_{\bC} \mu) \,  [\bcOp_{t}] & \text{if } t \leq n\\
  (\dim_{\bC} (\varsigma_2|_{\wtK^t}\otimes \mu)^{\rO(t-n)}) \, 
[\bcOp_n] & \text{if } t > n.
\end{cases} \qed
\]
\end{thm}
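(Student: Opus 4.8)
The plan is to read off both the associated variety and the associated cycle directly from the isotropy data supplied by \Cref{T52}, using the general definitions recalled in \Cref{sec:def}. Recall that $B' = \varsigma_2^*|_{\wtK'} \otimes \Gr(L(\mu')^*)$ is a finitely generated $\bC[\bcOp_d]$-module, so its support $\Supp \sB'$ is a closed $K_\bC'$-invariant subset of $\bcOp_d$. By \Cref{T52}(i), $\chi_{x'} = i^*_{x'}\sB'$ is nonzero for $x' \in \cO'_d$ precisely because $L(\mu') = \theta(\mu) \neq 0$ is assumed; hence $x' \in \Supp \sB'$, and since $\cO'_d$ is open dense in $\bcOp_d$ we get $\Supp \sB' = \bcOp_d$. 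Taking into account that tensoring by the one-dimensional character $\varsigma_2^*|_{\wtK'}$ does not change the support, this yields $\AV(L(\mu')^*) = \Supp \Gr(L(\mu')^*) = \bcOp_d$, which is the first assertion.

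For the associated cycle, the key point is that $\cO'_d$ is the unique open orbit in $\AV(L(\mu')^*)$, so by the definition in \Cref{sec:def} we have $\AC(L(\mu')^*) = m(\cO'_d, L(\mu')^*)\,[\bcOp_d]$ with $m(\cO'_d, L(\mu')^*) = \dim_\bC \tchi_{x'} = \dim_\bC \chi_{x'} = \dim_\bC i^*_{x'}\sB'$, the last two equalities again because $\varsigma_2$ is one dimensional. It then remains only to compute $\dim_\bC i^*_{x'}\sB'$ in the two cases. For $t \leq n$, \Cref{T52}(ii) identifies $\chi_{x'} \cong \tau$ as an $\rO(t,\bC)$-module with the remaining factors of $K'_{x'}$ acting trivially, and since $\tau = \varsigma_2|_{\wtK^t} \otimes \mu$ with $\varsigma_2$ one dimensional, $\dim_\bC \chi_{x'} = \dim_\bC \tau = \dim_\bC \mu$. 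For $t > n$, \Cref{T52}(iii) gives $\chi_{x'} \cong \tau^{\rO(t-n)}$ as an $\rO(n,\bC)$-module, whence $\dim_\bC \chi_{x'} = \dim_\bC \tau^{\rO(t-n)} = \dim_\bC(\varsigma_2|_{\wtK^t} \otimes \mu)^{\rO(t-n)}$. Substituting these into $\AC(L(\mu')^*) = (\dim_\bC i^*_{x'}\sB')[\bcOp_d]$ gives the stated case distinction.

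Honestly, there is no real obstacle here: the entire content has been front-loaded into \Cref{T52}, whose proof the authors attribute to \cite{Ya}, and into the already-established identification $B' = (\bC[W_2] \otimes \tau)^{K^t}$ together with $\bcOp_d = \psi_2(W_2)$. The only mild point requiring a sentence of justification is that one open orbit $\cO'_d$ exhausts the top-dimensional part of $\AV(L(\mu')^*)$, which follows because $\bcOp_d$ is irreducible (it is the image $\psi_2(W_2)$ of the irreducible variety $W_2$, or alternatively the closure of the single orbit $\cO'_d$ by the definitions in \Cref{S14} and \Cref{sec:lift.char}), so that the sum defining $\AC$ has a single term. Thus I would present the argument as: (1) irreducibility of $\bcOp_d$ and nonvanishing of $\chi_{x'}$ give $\AV(L(\mu')^*) = \bcOp_d$ with a single-term associated cycle; (2) the multiplicity equals $\dim_\bC i^*_{x'}\sB'$ by definition; (3) plug in \Cref{T52}(ii),(iii) and use $\dim \varsigma_2 = 1$ to get the two cases. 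This is a two-line corollary and should be written as such.
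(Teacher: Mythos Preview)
Your proposal is correct and follows exactly the approach the paper indicates: the paper itself gives no proof beyond the sentence ``The next theorem follows immediately from the definitions in \Cref{sec:def} and the above theorem on the isotropy representations'' (and the terminal $\qed$), and you have faithfully spelled out that immediate deduction from \Cref{T52}. There is nothing to add.
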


\subsection{Theta lift of $L(\mu')$} \label{S34} We consider the dual
pairs $(G, G') = (G^{p,q},G')$ and $(K^t, G') = (G^{t,0},G')$.
Let $(\bfG,G') = (G^{p,q+t},G')$. For reasons which will be clear
later (see \Cref{P24}~(ii)), we assume \eqref{eq16}.  Then $H$ contains $G \times K^t =
G^{p,q} \times G^{0,t}$. We note that $(H,G')$ is in the stable range
but $(G,G')$ could be outside the stable range.  Let $\bfK =
K^{p,q+t}$ denote a maximal compact subgroup of $\bfG$ compatible with
$G$ and $K^t$.  Let $\sY_H$ (resp. $\sY$, $\sY_2$) be Fock model of
the oscillator representation associated to the dual pair $(H,G_0')$
(resp. $(G, G_1')$, $(K^t,G_2')$) where $G_0' = G_1' = G_2' = G'$.
Then as an infinitesimal module of $(\wtG \times \wtG_1') \times
(\wtK^t \times \wtG_2')$,
\begin{equation} \label{eqn20}
\sY_H \cong \sY \otimes \sY_2^*.
\end{equation}
We note that $\wtG_0'$ is a split double cover of $G'$. On the other
hand $\wtG_1' \simeq \wtG_2'$ and it is a split double cover of $G'$
if and only if $t$ is even. Without fear of confusion, we will denote
all the three $\wtG_i'$'s by $\wtG'$.

Let $\theta^{p, q+t} : \cR(\wtG';\sY_H) \rightarrow \cR(\wtG^{p,
  q+t};\sY_H)$ and $\theta^{p,q}: \cR(\wtG';\sY) \rightarrow
\cR(\wtG^{p,q};\sY)$ be the theta lifting maps.  For $\mu \in
\cR(\wtK^t,\sY_2)$, we set $L(\mu') = \theta(\mu)$ to be the unitary
lowest module as defined in \Cref{S34a}. Let $\fgg^{p,q}$ denote the
complex Lie algebra of $G^{p,q}$.

\begin{prop} \label{P24} 
\begin{enumerate}[(i)]
\item For $\mu \in \cR(\wtK^t,\sY_2)$, we have
 \begin{equation} \label{eq7} \Theta^{p,q}(L(\mu')) \cong
    (\Theta^{p, q+t}(\sigma')\otimes \mu)^{K^t}
\end{equation}
as (possibly zero) $(\frakg^{p,q},\wtK^{p,q})$-modules.  

\item
Suppose $p,q,t$ satisifes \eqref{eq16} so that $\Theta^{p,
  q+t}(\sigma')$ is a nonzero and unitarizable\linebreak $(\fgg^{p,
  q+t},\wtK^{p, q+t})$-module by Proposition \ref{P32}. If $\Theta^{p,
  q}(L(\mu'))$ is nonzero, then it is a unitarizable and irreducible
$(\fgg^{p,q}, \wtK^{p,q})$-module. In particular $\Theta^{p,
  q}(L(\mu')) = \theta^{p, q}(L(\mu'))$
\end{enumerate}
\end{prop}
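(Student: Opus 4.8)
\textbf{Proof plan for Proposition \ref{P24}.}

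The plan for part (i) is to combine the stable-range realization of $\Theta^{p,q+t}(\sigma')$ from \Cref{Prop:Theta} (or its equivalent form in \Cref{S24}) with the decomposition $\sY_H \cong \sY \otimes \sY_2^*$ of \eqref{eqn20}. First I would use the definition $L(\mu') = \theta(\mu) = (\sY_2 \otimes \mu^*)^{K^t}$ together with the fact that $L(\mu')$ is its own full theta lift, so that applying $\Theta^{p,q}$ to $L(\mu')$ amounts to forming $\Hom_{\fgg',\wtK'}(\sY, -)$-type quotients. The key identity is that taking $K^t$-covariants (equivalently $K^t$-invariants, since $K^t$ is compact) commutes with the Howe quotient construction: starting from $\sY_H \cong \sY \otimes \sY_2^*$ as a module over $(\wtG \times \wtG') \times (\wtK^t \times \wtG')$ with the two copies of $\wtG'$ identified diagonally, one computes $\Theta^{p,q+t}(\sigma')$ as a quotient of $\sY_H$ by the intersection of kernels of $\wtG'$-homomorphisms to $\sigma'$, then tensors with $\mu$ and takes $K^t$-invariants. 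Chasing the Hom-adjunctions exactly as in the proof of \Cref{Prop:Theta} — replacing $\sY$ by $\sY_H$, using that $\sigma'|_{\fgg'}$ is trivial and $\mu$ is $\wtK^t$-finite — should turn $(\Theta^{p,q+t}(\sigma') \otimes \mu)^{K^t}$ into the full theta lift from $\wtG'$ to $\wtG$ of the module $(\sY_2 \otimes \mu^*)^{K^t} \otimes (\text{character accounting}) = L(\mu')$, giving \eqref{eq7}. I would phrase this via the chain of isomorphisms on $\Hom_\bC(-, \bC)$ so that exactness of the (compact-group) invariants functor is transparent.

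For part (ii), the plan is first to invoke \Cref{P32}: under \eqref{eq16} the pair $(G^{p,q+t}, G')$ is in the stable range and $p+q+t$ is even, so $\Theta^{p,q+t}(\sigma') = \theta^{p,q+t}(\sigma')$ is nonzero, irreducible, and unitarizable as a $(\fgg^{p,q+t}, \wtK^{p,q+t})$-module. Restricting a unitarizable module along the inclusion $\wtG \times \wtK^t \hookrightarrow \wtG^{p,q+t}$ and then passing to the $K^t$-isotypic component for $\mu$ yields a unitarizable $(\fgg^{p,q}, \wtK^{p,q})$-module (a discrete summand of a unitary restriction is unitary), so $\Theta^{p,q}(L(\mu')) = (\Theta^{p,q+t}(\sigma') \otimes \mu)^{K^t}$ is unitarizable whenever it is nonzero. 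Unitarizability forces complete reducibility, so $\Theta^{p,q}(L(\mu'))$ is a direct sum of irreducibles; but by Howe's finiteness theorem (the discussion around \eqref{eqHowequotient}) a nonzero full theta lift has a \emph{unique} irreducible quotient $\theta^{p,q}(L(\mu'))$, and a completely reducible module with a unique irreducible quotient must itself be irreducible. Hence $\Theta^{p,q}(L(\mu')) = \theta^{p,q}(L(\mu'))$.

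The main obstacle I anticipate is part (i): making the interchange of the Howe quotient with $K^t$-covariants rigorous, in particular keeping careful track of the genuine characters $\varsigma$, $\varsigma_2$ and the splitting behavior of the three double covers $\wtG_i'$ (which agree up to the parity of $t$), so that the character twist coming from $\sY_2^*$ versus $\sY_2$ lands exactly on $\mu^*$ versus $\mu$ and produces $L(\mu')$ rather than some twist of it. The Hom-chasing itself is formal once one fixes conventions, but verifying that no stray one-dimensional character survives — and that the two $\wtG'$-actions really are identified diagonally in $\sY_H$ under \eqref{eqn20} — is where the bookkeeping is delicate. A secondary point in part (ii) is justifying that the $\mu$-isotypic component of the \emph{restriction} of an irreducible unitary $(\fgg^{p,q+t},\wtK^{p,q+t})$-module to $(\fgg^{p,q},\wtK^{p,q}) \times (\fgg^t_{\text{triv}}, \wtK^t)$ is again a \emph{Harish-Chandra} module (admissibility), which follows from Howe's results but should be cited explicitly.
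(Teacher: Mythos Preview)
Your proposal is correct and matches the paper's approach. For part (ii) your argument is essentially verbatim the paper's: unitarizability from being a summand of the unitarizable $\Theta^{p,q+t}(\sigma')\otimes\mu$, hence complete reducibility, combined with the unique-irreducible-quotient property of a full theta lift to force irreducibility. For part (i) the paper simply cites \cite{Lo} and says ``see-saw pair argument''; your plan to use the factorization $\sY_H\cong\sY\otimes\sY_2^*$ of \eqref{eqn20} and then commute the $K^t$-covariants with the Howe quotient is exactly what a see-saw argument unpacks to, so you are spelling out what the paper leaves as a citation.
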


\begin{proof}
  Part (i) is proved in \cite{Lo} as a consequence of a see-saw pair
  argument. In (ii), $\Theta(L(\mu'))$ is unitarizable because it is a
  submodule of the unitarizable module $\Theta(\sigma') \otimes
  \mu$. In particular, $\Theta(L(\mu'))$ is a direct sum of its
  irreducible submodules. On the other hand $\Theta(L(\mu'))$ is a
  full theta lift so it has a unique irreducible quotient module.
  This proves that $\Theta(L(\mu'))$ is irreducible.
\end{proof}

\subsection{Outside stable range} \label{S23} In Propositions
\ref{T22} and \ref{P24}, we have assumed \eqref{eq:range.C} and
\eqref{eq16}. One could easily extend the definitions of
$\Theta(\sigma')$ and $\Theta(L(\mu'))$ beyond these
assumptions. Equation~\eqref{eq7} continues to hold. However both
$\Theta(\sigma')$ and $\Theta(L(\mu'))$ are not necessarily nonzero or
irreducible. We will briefly discuss below. We will denote
$\theta^{p,q+t}(\sigma')$ by $\theta_n^{p, q+t}(\sigma')$ and
$\theta^{p,q}(L(\mu'))$ by~$\theta_n^{p,q}(L(\mu'))$.  We refer the reader
to \cite{Lo} and \cite{LMT} for more details.

Outside the stable range, $\theta_n^{p,q}(\sigma')$ is nonzero if and only
if one of the following situation holds:
\begin{enumerate}[(A)]
\item We have $p = q+t \leq 2n$. If $n\leq p-1$, then $\theta_n^{p,p}(\sigma') =
  \theta_{p-1-n}^{p,p}(\sigma')$ which is in the stable range. By
  \eqref{eq7}, if $\theta_{p-1-n}^{p,q}(L(\mu')) \neq 0$, then
  $\theta_n^{p,q}(L(\mu')) = \theta_{p-1-n}^{p,q}(L(\mu'))$.
  If $n\geq p$, then $\theta^{p,p}_n(\sigma')$ is finite dimensional
  and its associated variety is the zero orbit.

\item We have $n \leq p \leq 2n-1$ and $q + t = p + 2$. Let $\delta$
  denote the one dimensional character of $\rO(p,p+2)$ which is
  $\det_{\rO(p)}$ on $\rO(p)$ and trivial on $\rO(p+2)$.  Then $\delta
  \theta_n^{p,p+2}(\sigma') = \theta_{p-n}^{p,p+2}(\sigma')$ and we
  are back to the stable range.  By \eqref{eq7}, $\delta
  \theta_n^{p,q}(L(\mu')) = \theta_{p-n}^{p,q}(L(\mu'))$.
\end{enumerate}
Finally in Case (A), it is possible that $\theta_n^{p,q}(L(\mu')) \neq
0$ but $\theta_{p-1-n}^{p,q}(L(\mu')) = 0$. Most of these lifts
$\theta_n^{p,q}(L(\mu'))$'s are non-unitarizable. This situation
arises because the maximal Howe quotient~$\Theta(\sigma')$ is
reducible. It is possible to analyze of the $K$-types of
$\Theta(\sigma')$ as in~\cite{Lo} and compute its associated
cycles. This is tedious so we will omit this case.

\section{Associated cycles of theta lifts of unitary lowest weight
  modules} \label{S5}

\subsection{} In this section, we assume the notation in \Cref{S34}:
$G' = \spl(2n,\bR)$, $H = G^{p, q+t}$ contains $G \times K^t =
G^{p,q} \times G^{0,t}$ and $p,q,t$ satisfies \eqref{eq16}.  We pick a
$\mu \in \cR(\wtK^t,\sY_2)$ and we let $L(\mu') = \theta(\mu)$ be the
lowest weight $(\fgg', \wtK')$-module.  By \Cref{P24}
$\theta^{p,q}(L(\mu'))$ is the full theta lift.

In Lemma \ref{L33}, we define the grade module $\Gr
\theta^{p,q}(L(\mu'))$ via a natural filtration $V_j = \cU_j(\frakg)
V_0$ on $\theta^{p,q}(L(\sigma'))$ where $V_0$ is the lowest degree
$\wtK$-type. Similarly we define the graded module $\Gr \theta^{p,
  q+t}(\sigma')$ via a natural filtration $E_j = \cU_j(\frakh) E_0$ on
$\theta^{p, q+t}(\sigma')$ where~$E_0$ is the lowest degree
$\wtK_H$-type.  The following lemma is a commutative version of
\Cref{P24}.

\begin{lemma}\label{lem:res.c}
As $(\cS(\fpp), \wtK^{p,q})$-modules, 
\[
\Gr \theta^{p,q}(L(\mu')) \cong (\Gr\theta^{p, q+t}(\sigma')\otimes
\mu)^{K^t}.
\]
\end{lemma}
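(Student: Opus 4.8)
The plan is to upgrade the module-level isomorphism \eqref{eq7} of \Cref{P24}(i) to the level of associated graded modules, using the compatibility of the natural filtrations with the see-saw construction. First I would recall the filtrations involved: on $\theta^{p,q+t}(\sigma')$ we have $E_j = \cU_j(\frakh)E_0 = \eta_H(\bfE_{j_0^H + 2j})$ built from $\bfE_H = V_{\sigma'^*}\otimes \sY_H$ as in \eqref{eq:wjdec} and \Cref{L33}, while on $\theta^{p,q}(L(\mu'))$ we have $V_j = \cU_j(\frakg)V_0 = \eta(\bfE_{j_0+2j})$ built from $\bfE = V_{L(\mu')^*}\otimes\sY$. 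Using $\Theta^{p,q}(L(\mu'))\cong(V_{L(\mu')^*}\otimes\sY)_{\fgg',K'}$ from \Cref{Prop:Theta} together with the description $V_{L(\mu')^*} = (\sY_2^*\otimes\mu)^{K^t}$ from \Cref{S34a}, and the tensor factorization $\sY_H\cong\sY\otimes\sY_2^*$ of \eqref{eqn20}, I would identify $\bfE_H$ (after taking $K^t$-invariants appropriately) with $V_{L(\mu')^*}\otimes\sY$; the point is that both are built from $\sY\otimes\sY_2^*$, and the $K^t$-invariants on the $\sY_2^*$-side is exactly what converts the $\sigma'$-picture for $H$ into the $L(\mu')$-picture for $G$.

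Next I would check that the degree filtrations match up under this identification. The Fock-space degree on $\sY_H\cong\sY\otimes\sY_2^*$ is the sum of the degrees on the two tensor factors, so $\sY_{H,b} = \sum_{b_1+b_2=b}\sY_{b_1}\otimes\sY_{2,b_2}^*$. The filtration $\bfE_{H,j} = \sum_{2a+b=j}V_{\sigma'^*,a}^{*}\otimes\sY_{H,b}$ (here $\sigma'$ is a character so $V_{\sigma'^*,a}^* = V_{\sigma'^*}^*$ for all $a\ge 0$, i.e. this part contributes nothing to the filtration degree beyond the shift) then restricts, on the $\sY_2^*$ side, to precisely the filtration used to build $\Gr L(\mu')^* = (\Gr\sY_2^*\otimes\mu)^{K^t}$ in \Cref{sec:AC.L} — this is the standard fact, quoted there from \cite{KV,Ya}, that \Cref{L3} holds at the graded level for compact dual pairs even outside stable range. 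So taking $K^t$-invariants term by term in the filtration degree, $(\bfE_{H,j})^{K^t}$ matches $\sum_{2a+b=j}V_{L(\mu')^*,a}^{*}\otimes\sY_b = \bfE_j$, because $V_{L(\mu')^*}^* = (\sY_2^*\otimes\mu)^{K^t}$ carries exactly the filtration induced from the $\sY_2^*$-degree.

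The final step is to pass to the graded modules. Since $K^t$ is compact, taking $K^t$-invariants is exact and commutes with forming associated graded modules (filtration quotients of a $K^t$-module decompose into invariant and non-invariant parts). Therefore $\Gr\bigl((\Theta^{p,q+t}(\sigma')\otimes\mu)^{K^t}\bigr) \cong \bigl(\Gr(\Theta^{p,q+t}(\sigma')\otimes\mu)\bigr)^{K^t} = (\Gr\theta^{p,q+t}(\sigma')\otimes\mu)^{K^t}$, where I use that $\mu$ is a fixed finite-dimensional $\wtK^t$-module so it passes through the grading untouched. Combining with $\Gr$ applied to \eqref{eq7}, i.e. $\Gr\Theta^{p,q}(L(\mu'))\cong\Gr\bigl((\Theta^{p,q+t}(\sigma')\otimes\mu)^{K^t}\bigr)$ via the filtration identification $V_j = (E_j\otimes\mu)^{K^t}$ established above, gives the claim; the $\cS(\fpp)$-module structure is visibly preserved since $\fpp\subset\fpp_H$ acts compatibly and commutes with $K^t$. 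The main obstacle I anticipate is the bookkeeping in the second step: verifying that the shift constants $j_0$ (the lowest $\wtK$-type degree) and $j_0^H$ are related correctly so that $V_j = (E_j\otimes\mu)^{K^t}$ holds on the nose rather than up to an index shift, and confirming that the "$V_{\sigma'^*,a}^*$ is constant in $a$" simplification really does make the $H$-filtration degree reduce to the Fock degree on $\sY\otimes\sY_2^*$; this needs the observation that the lowest $\wtK_H$-type of $\theta^{p,q+t}(\sigma')$ restricted through the see-saw recovers the lowest $\wtK$-type of $\theta^{p,q}(L(\mu'))$ tensored with $\mu^*$, which should follow from \eqref{eq7} applied in lowest degree.
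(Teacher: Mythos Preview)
Your proposal is correct and follows essentially the same route as the paper's proof in \Cref{sec:proof.res}: both transport the isomorphism \eqref{eq7} to the graded level by showing that the filtration $E'_j := (E_j\otimes\mu)^{K^t}$ on $\theta^{p,q}(L(\mu'))$ agrees with the natural filtration $V_j$ up to an index shift, using the Fock decomposition $\sY_H\cong\sY\otimes\sY_2^*$ and \Cref{L33}. Your anticipated obstacle is exactly what the paper addresses --- the identity is indeed $V_j = E'_{j+j_0}$ with a nontrivial shift $j_0$ (the smallest $j$ with $E'_j\neq 0$), not on the nose, and the paper tracks this by writing the surjection $\eta'_j\colon \sum_{a+2l=2j+e_0-l_0}\sY_a\otimes L_l \twoheadrightarrow E'_j$ and recognizing its domain as the pre-\Cref{L33} filtration for $\theta^{p,q}(L(\mu'))$.
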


The proof is given in  \Cref{sec:proof.res}.

\subsection{The moment maps}\label{S53}
We will describe some moment maps. These maps are given explicitly in
terms of complex matrices in \Cref{sec:B}.

With reference to \eqref{eq2}, we denote the following moment maps
with respect to the dual pairs:
\[
\begin{array}{l|l|l}
  (G, G') & (G^{p,0}, G') & (G^{0,q},G') 
  \\ \hline 
  \psi \colon W \to \fpp'^* & \psi^+ \colon W^+ \to (\fpp'^+)^* &
  \psi^- \colon W^- \to (\fpp'^-)^* 
\end{array}
\]
We have the decomposition $W = W^+\oplus W^-$, $\fpp' =
\fpp'^+\oplus \fpp'^-$ and $\psi = \psi^+ \oplus \psi^-$.

The containment $H = G^{p, q+t} \supset G \times K^t$ gives a
decomposition $W_H = W \oplus W_2 = (W^+ \oplus W^-) \oplus W_2$.
If we replace $G$ by $H$ in the above table, then we have the moment
maps $\psi_H = \psi^+\oplus \psi_H^-$ where $\psi_H^- \colon W_H^- =
W^-\oplus W_2\to (\fpp'^-)^*$ is the moment map for pair $(G^{0,q+t},G')$. 
With respect to dual pair $(K^t, G')$, we have 
\[\psi_2 : W_2  \longrightarrow
(\fpp'^-)^* = 0\oplus (\fpp'^-)^* \subset \fpp'^*.\] Finally we get
$\psi_H^- = \psi^- + \psi_2 \colon W_H^- = W^- \oplus W_2 \to
(\fpp'^-)^*$.

\medskip

On the other side of \eqref{eq2}, we have $\phi_H \colon W_H \to
\fpp_H^*$ and $\phi \colon W \to
\fpp^*$. Let $\pr\colon W\oplus W_2\to W$ be the natural projection
and $\pr_H : \fpp_H^* \to \fpp^*$ be the projection induced from
 $\fpp\hookrightarrow \fpp_H$.  They form a commutative
diagram:
\begin{equation} \label{eq19}
\xymatrix{
\makebox[0pc][r]{$W_H =$ } W \oplus W_2 \ar[r]^{\ \ \ \ \pr} \ar[d]_{\phi_H} 
& W \ar[d]^{\phi}
\\
\fpp_H^* \ar[r]_{\pr_H} & \fpp^*.}
\end{equation}

For the rest of this paper, we refer to the orbits in
  \Cref{sec:lift.char} and we set
\begin{inparaenum}[(i)]
\item 
$\cO_H = \cO_{p,q+t} =\theta(0,G',H)$ in $\fpp_H^*$,
\item  $\cO' =\cO_d'$ in $(\fpp'^-)^*$,
\item $\cO = \cO_{p,q,t}=\theta(\cO',G',G)$ in $\fpp^*$ and
\item $\bcN = \bcNp \times \bcNn = (\psi^+)^{-1}(0)
  \times (\psi_H^-)^{-1}(0)$ to be the null cone corresponding to the
  dual pair $(H,G')$.
\end{inparaenum}

\begin{lemma} \label{lem:orbit.O1} We have $\pr_H(\bcO_H) =
  \theta(\bcOp; G', G)$ which is the Zariski closure of the
  $K_\bC$-orbit $\cO$.

\end{lemma}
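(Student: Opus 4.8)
The plan is to identify $\pr_H(\bcO_H)$ with $\theta(\bcOp;G',G)$ by tracing through the diagram \eqref{eq19} and using the definition $\theta(S') = \phi(\psi^{-1}(S'))$ from \Cref{S14}, together with the factorization of the null-cone moment maps set up in \Cref{S53}. First I would unwind $\bcO_H$: since $\cO_H = \theta(0;G',H) = \phi_H(\psi_H^{-1}(0)) = \phi_H(\bcN)$, and $\phi_H$ is $K_{H,\bC}$-equivariant with $\bcN$ a $K_{H,\bC}$-stable cone, one has $\bcO_H = \overline{\phi_H(\bcN)}$; but in fact $\phi_H(\bcN)$ is already closed (it is the image of the null cone, which by \cite{Ohta,DKP} as quoted around \eqref{eqP16} is the closure of a single orbit), so $\bcO_H = \phi_H(\bcN)$. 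Then by the commutativity of \eqref{eq19},
\[
\pr_H(\bcO_H) = \pr_H(\phi_H(\bcN)) = \phi(\pr(\bcN)).
\]
So the task reduces to computing $\pr(\bcN)$ inside $W$ and showing $\phi$ of it equals $\theta(\bcOp;G',G) = \phi(\psi^{-1}(\bcOp))$.

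The key step is therefore the identity $\pr(\bcN) = \psi^{-1}(\bcOp)$ as subsets of $W$, possibly after taking closure. Here I would use the explicit description $\bcN = \bcNp \times \bcNn$ with $\bcNp = (\psi^+)^{-1}(0) \subset W^+$ and $\bcNn = (\psi_H^-)^{-1}(0) \subset W_H^- = W^- \oplus W_2$, and the relation $\psi_H^- = \psi^- + \psi_2$. Given $w = (w^+, w^-, w_2) \in \bcN$, we have $\psi^+(w^+) = 0$ and $\psi^-(w^-) + \psi_2(w_2) = 0$, i.e. $\psi^-(w^-) = -\psi_2(w_2) \in -\psi_2(W_2) = \psi_2(W_2)$ (the image is a cone, stable under the relevant sign); so $\pr(w) = (w^+, w^-)$ satisfies $\psi^+(w^+) = 0$ and $\psi^-(w^-) \in \psi_2(W_2) = \bcOp$, recalling $\bcOp = \bcOpd = \psi_2(W_2)$ from \Cref{sec:AC.L}. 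Since $\psi = \psi^+ \oplus \psi^-$ and $\fpp'^* = (\fpp'^+)^* \oplus (\fpp'^-)^*$ with $\bcOp$ sitting in $0 \oplus (\fpp'^-)^*$, this says exactly $\pr(w) \in \psi^{-1}(\bcOp)$. Conversely, given $(w^+, w^-) \in \psi^{-1}(\bcOp)$, one has $\psi^+(w^+) = 0$ and $\psi^-(w^-) \in \bcOp = \psi_2(W_2)$, so by surjectivity of $\psi_2$ onto its image there is $w_2 \in W_2$ with $\psi_2(w_2) = -\psi^-(w^-)$, whence $(w^+, w^-, w_2) \in \bcN$ and projects to $(w^+,w^-)$. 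Thus $\pr(\bcN) = \psi^{-1}(\bcOp)$ exactly, and applying $\phi$ gives $\pr_H(\bcO_H) = \phi(\psi^{-1}(\bcOp)) = \theta(\bcOp;G',G)$.

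Finally I would invoke \eqref{eqP16}: under the hypothesis \eqref{eq16} we have $2n \le \min(p,q+t)$, so by \cite{Ohta,DKP} as cited in \Cref{item:O.b}, $\theta(\bcOp;G',G) = \theta(\overline{\cO'_d};G',G)$ is the closure of the single $K_\bC$-orbit $\cO_{p,q,t} = \cO$. This yields the stated conclusion $\pr_H(\bcO_H) = \theta(\bcOp;G',G) = \overline{\cO}$.

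\textbf{Main obstacle.} The delicate point is the claim that $\phi_H(\bcN)$ is already Zariski closed (not merely that its closure is $\bcO_H$), and more generally keeping careful track of closure operations: $\pr$ and $\phi$ need not be closed maps in general. I expect this is handled by the fact that the null cone $\bcN$ is defined by the vanishing of the $G'$-moment map, so $\phi_H(\bcN)$ is the image of an affine cone under a morphism that is known (via the classical invariant theory of these dual pairs, e.g. \cite{Ohta,DKP,DKP}) to have image exactly $\bcO_H$; alternatively, one argues at the level of orbits: $\bcN$ is a union of finitely many $K_{H,\bC}$-orbits, its image under the equivariant map $\phi_H$ is a union of finitely many $K_\bC$-orbits hence constructible and $K_\bC$-stable, and a constructible $K_\bC$-stable set whose closure is irreducible of the expected dimension with a dense orbit must equal that orbit closure once one checks the generic fiber. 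Making this fiber-dimension count precise — i.e. that $\pr$ restricted to $\bcN$ dominates $\psi^{-1}(\bcOp)$ with the right-dimensional fibers coming from the $\psi_2$-fibers — is the one genuinely computational ingredient, and it is exactly the content of the explicit matrix descriptions promised in \Cref{sec:B}.
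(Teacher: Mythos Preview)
Your proof is correct and follows essentially the same route as the paper: reduce via the commutative square \eqref{eq19} to showing $\pr(\bcN) = \psi^{-1}(\bcOp)$, and verify this by the elementary computation $\psi_H^- = \psi^- + \psi_2$ together with $\bcOp = \psi_2(W_2)$. Your worry about whether $\phi_H(\bcN)$ is already closed is unnecessary here, since the paper's Definition in \Cref{S14} asserts that $\theta(S')$ is closed for any closed $K'_\bC$-invariant $S'$; in particular $\bcO_H = \theta(0;G',H) = \phi_H(\bcN)$ is taken as closed from the outset, so no further fiber-dimension or constructibility argument is needed.
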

\begin{proof}  We have $\overline{\cO_H} = \theta(0; G',
  H)=\phi_H(\bcN) $.  By~\eqref{eq19}, $\pr_H(\overline{\cO_H}) = \pr_H\circ \phi_H(\bcN)
  = \phi \circ \pr(\bcN)$ so it suffices to show that $\pr(\bcN) =
  \psi^{-1}(\bcOp)$.
  
  Let $(w,w_2) \in W^- \oplus W_2$. Then $(w,w_2) \in \bcNn$ if and
  only if $\psi^-(w) + \psi_2(w_2) = 0$. Hence $(w^+,w) \in
  \pr(\bcN)$ if and only if $w \in (\psi^-)^{-1}(\psi_2(W_2))$ and
  $w^+ \in \bcNp$.  Since $\bcOp = \psi_2 (W_2)$, we have
  \[
  \pr(\bcN) = \psi^+(0) \oplus (\psi^-)^{-1}(\psi_2(W_2)) =
  \psi^{-1}(\psi_2(W_2) )= \psi^{-1}(\bcOp)
  \] 
  as required. This proves the lemma.
\end{proof}

\subsection{} 
Let $\varsigma_H$ denote the lowest $\wtU(W_H)$-type of $\sY_H$. We
recall $B_H = \varsigma_H^*|_{\wtK_H} \otimes \Gr(\theta^{p,
  q+t}(\sigma'))$. Let $B = \varsigma_H^*|_{\wtK} \otimes
\Gr(\theta^{p,q}(L(\mu')))$.  We set $\tau :=
\varsigma_H|_{\wtK^t}\otimes \mu$ as in \eqref{eq:def.tau}. Then by
\Cref{lem:res.c}, we have
\begin{equation} \label{eq17} 
B = (B_H \otimes \tau)^{K^t}.
\end{equation}
Since $B_H$ is a $(\bC[\overline{\calO_H}],K_H)$-module, applying
\Cref{lem:orbit.O1} to \eqref{eq17} shows that $B$ is a
$(\bC[\bcO],K_\bC)$-module.  Let $\sB$ be the quasi-coherent sheaf on
$\bcO$ associated with $B$.  In particular, $\Supp(\sB) \subseteq
\bcO$.

Fix $x \in \cO$ and let $i_x \colon \set{x} \to \bcO$ be the inclusion
map. Let $\cI_{x}$ be the maximal ideal in $\cS(\fpp)$ defining $x$.
Let $q_H := \pr_H \circ \phi_H : W_H \rightarrow \fpp^*$ and let $Z =
q_H^{-1}(x) \cap \bcN$ be the set theoretic fiber.

\begin{lemma} \label{L55} We have $\bC[Z] =
  \bC[\bcN]/\cI_x\bC[\bcN] = \bC[x] \otimes_{\bC[\bcO]}
  \bC[\bcN]$.
\end{lemma}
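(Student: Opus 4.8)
The statement is that $\bC[Z] = \bC[\bcN]/\cI_x\bC[\bcN]$, i.e.\ the scheme-theoretic fiber of $q_H|_{\bcN}$ over $x$ is reduced and agrees with the set-theoretic fiber $Z$, and that moreover this quotient is the base change $\bC[x]\otimes_{\bC[\bcO]}\bC[\bcN]$. The second equality is essentially formal: since $B_H$, hence $\bC[\bcN]$, is a $\bC[\bcO_H]$-module, the map $q_H^*\colon \cS(\fpp)\to \bC[W_H]$ factors through $\bC[\bcO]$ by \Cref{lem:orbit.O1} (it factors through $\bC[\bcO_H]$ via $\phi_H$ and through $\bC[\bcO]$ via the projection $\pr_H$), so $\cI_x\bC[\bcN]$ is the image of the maximal ideal of $x$ inside $\bC[\bcO]$ extended to $\bC[\bcN]$, and $\bC[x]\otimes_{\bC[\bcO]}\bC[\bcN] = \bC[\bcN]/\cI_x\bC[\bcN]$ by right-exactness of $\otimes$. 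The real content is the first equality: the reducedness of the fiber.

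\textbf{Key steps for the reducedness.} The plan is to reduce to the corresponding statement for the stable-range pair $(H,G')$. First I would observe, using the commutative square \eqref{eq19} and \Cref{lem:orbit.O1}, that $q_H = \pr_H\circ\phi_H$ and that $\bcN = \phi_H^{-1}(\bcO_H)$ (this is the null-cone description from \Cref{S2}, now applied to $(H,G')$, where $\bcN = \psi_H^{-1}(0')$ and $\phi_H^{*}(I(\bcO_H)) = I(\bcN)$ by \cite{GW,H3}). So $\bC[\bcN] = \bC[W_H]/I(\bcN)$ and $\bC[\bcN]$ is a $\bC[\bcO_H]$-module via $\phi_H^*$. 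Now $x\in\cO = \cO_{p,q,t}$ is a point of the \emph{open} dense orbit in $\bcO$, and $\pr_H\colon\bcO_H\to\bcO$ is the restriction of a linear projection; the fiber $\pr_H^{-1}(x)\cap\bcO_H$ is therefore a (locally closed, actually closed-in-$\bcO_H$) subvariety, and because $x$ lies in the open orbit and $\pr_H$ is $K_\bC$-equivariant and dominant onto $\bcO$, the scheme-theoretic fiber $\Spec(\bC[\bcO_H]/\cI_x\bC[\bcO_H])$ is reduced —- this is exactly the argument already used in the proof of \Cref{L46} (``the scheme-theoretic fiber over a point generating an open dense orbit under a surjective equivariant map is reduced and equals the set-theoretic fiber''). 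Then I would pull this back along $\phi_H$: since $\bC[\bcN]$ is obtained from $\bC[\bcO_H]$ by the flat-over-the-smooth-locus fibration $\phi_H|_{\bcN}\colon\bcN\to\bcO_H$ (the fibers $F_y$ are single free $K_\bC'$-orbits over $\cO_H$ by the Lemma preceding \eqref{fig:orbchar1}), reducedness of the fiber over $x$ in $\bcO_H$ propagates to reducedness of $q_H^{-1}(x)\cap\bcN$; concretely, $\bC[\bcN]/\cI_x\bC[\bcN] = \bC[\bcN]\otimes_{\bC[\bcO_H]}(\bC[\bcO_H]/\cI_x\bC[\bcO_H])$ and the latter factor is a reduced Artinian ring (a product of copies of $\bC$ over the finitely many points of $\pr_H^{-1}(x)\cap\bcO_H$), while $\bC[\bcN]$ is reduced and generically free over it, so no nilpotents are introduced.

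\textbf{The main obstacle.} The delicate point is the last step —- arguing that extending the reduced fiber ideal from $\bC[\bcO_H]$ up to $\bC[\bcN]$ does not create nilpotents, i.e.\ that $\cI_x\bC[\bcN]$ is already radical in $\bC[\bcN]$. One clean way is to use that over the open orbit $\cO_H\subset\bcO_H$ the map $\phi_H|_{\bcN}$ is a fiber bundle with smooth (single-orbit) fibers, hence flat with reduced fibers there, so the preimage of the reduced subscheme $\pr_H^{-1}(x)\cap\bcO_H$ (which meets $\partial\bcO_H$ in lower dimension, or is entirely contained in $\cO_H$ after shrinking) is reduced; one must check the fiber $\pr_H^{-1}(x)\cap\bcO_H$ does not meet the boundary $\partial\bcO_H$ in a bad way, or handle the boundary by a codimension/Serre $S_1$ argument as in the normality discussion after \Cref{prop:iso.cN}. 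Alternatively —- and this is probably how the authors proceed —- one appeals directly to the explicit matrix description of the moment maps in \Cref{sec:B} to write $Z$ down by equations and verify the fiber ideal is radical by inspection. I would first attempt the conceptual route via \Cref{lem:orbit.O1} and the open-orbit reducedness principle of \Cref{L46}, falling back on the explicit coordinates in \Cref{sec:B} only to nail the boundary behavior.
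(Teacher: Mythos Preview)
Your identification of the second equality as formal and of the ``open-orbit reducedness principle'' from the proof of \Cref{L46} as the key ingredient is exactly right. However, you apply that principle one level too low and thereby create the very obstacle you then struggle with.

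The paper's proof is a single sentence: apply generic reducedness in characteristic zero \emph{directly} to the $K_\bC$-equivariant surjection $q_H|_{\bcN}\colon \bcN \to \bcO$. Since $x$ lies in the open dense $K_\bC$-orbit $\cO$ and the map is equivariant, the scheme-theoretic fiber over $x$ is reduced, hence equals $Z$. There is no need to factor through $\bcO_H$, no pull-back along $\phi_H$, and consequently no boundary issue to worry about. Your factorization $\bcN \xrightarrow{\phi_H} \bcO_H \xrightarrow{\pr_H} \bcO$ is correct but gratuitous here: once you have a dominant equivariant morphism of varieties and a point in the open orbit of the target, you are done in one step. The ``main obstacle'' you describe --- whether $\pr_H^{-1}(x)\cap\bcO_H$ meets $\partial\cO_H$, and whether tensoring up to $\bC[\bcN]$ preserves reducedness --- is a self-inflicted difficulty that the direct argument bypasses entirely. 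Your fallback to explicit matrix computations in \Cref{sec:B} would also work, but is likewise unnecessary.
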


\begin{proof}
  Let $\bcN_{x}$ be the scheme theoretic fiber of $x$. We claim that
  $\bcN_{x}$ is reduced and thus equals to~$Z$. Indeed in
  characteristic zero, a generic scheme theoretic fiber is
  reduced. Since $x$ generates the dense open orbit $\cO$ in $\bcO$,
  $\bcN_{x}$ is reduced and the claim follows. Taking regular
    functions of $Z = \bcN_{x}$ gives the lemma.
\end{proof}

By \Cref{prop:iso.cN}, $B_H = (\bC[\bcN]\otimes A)^{K'_\bC}$ where $A =
\varsigma_H|_{\wtK'} \otimes \sigma'^*$.  Since $\cS(\fpp)$ is $K'_\bC
\times K_{\bC}^t$-invariant, \eqref{eq17} and \Cref{L55} give
\begin{equation}\label{eq:irep.1}
\begin{split}
  i_{x}^*\sB =& B/\cI_{x}B = (B_H/\cI_{x}B_H \otimes \tau)^{K_{\bC}^t} 
  = ( (\bC[\bcN]/\cI_{x}\bC[\bcN] \otimes A)^{K'_\bC}\otimes
  \tau)^{K_{\bC}^t} \\
=&   (\bC[Z] \otimes A\otimes \tau)^{K'_\bC\times K_{\bC}^t}.
\end{split}
\end{equation}

\medskip

In \Cref{sec:B}, we see that $\psi^- : W^- \rightarrow (\fpp'^-)^*$ is
surjective if and only if $q \geq n$.  From now on we split out
calculations into two cases, depending on whether $\psi^-$ is
surjective or not surjective.



\subsection{Case I} \label{S57}
We assume that $\psi^- : W^- \rightarrow (\fpp'^-)^*$ is surjective,
i.e. $q \geq n$. 

Let $Y = \pr(Z) = \phi^{-1}(x)\cap \pr(\bcN)$.  Fix $y = (y^+,y^-)\in
Y\subset W^+\oplus W^-$. Let $Z_y = \pr^{-1}(y)\cap Z$.  Let $x' =
-\psi^-(y^-) \in (\fpp'^-)^*$ and let $Z_{x'} = (\psi_2)^{-1}(x')$ in
$W_2$. We consider the diagram:
\begin{equation}\label{eq18}
 \xymatrix{
  Z_{x'}&  \ar[l]_T^{\cong} Z_y\ar[d]^{\pr}\ar@{^(->}[r] & Z\ar@{->>}[d]^{\pr}\\
  & \Set{ y } \ar@{^(->}[r]^{i_y} &  Y.
}
\end{equation}
The map $T$ will be given in \Cref{lem:geo.I}(iii) below.  Let $S_y
= \Stab_{K_\bC \times K'_\bC}(y)$ and $K_{x'}' = \Stab_{K'_\bC}(x')$.
We now state our key geometric \Cref{lem:geo.I}. Its proof is given in
\Cref{SB7}.


\begin{lemma}\label{lem:geo.I}
Suppose $\psi^- : W^- \rightarrow (\fpp'^-)^*$ is a surjection.
\begin{enumerate}[(i)]
\item Then $Y$ is a single $K_{x}\times K'_\bC$-orbit
  generated by $y$ so $Y \simeq (K_{x}\times K'_\bC)/S_y$.

\item There is group homomorphism $\beta \colon K_{x} \to K_{x'}'$ 
  such that 
\[
S_y = \Set{(k,\beta(k))| k \in K_{x}}.
\]
We denote the right hand side by $K_{x} \times_\beta K_{x'}'$.

\item There is a bijection $T : Z_y \rightarrow Z_{x'}$ such that $T$
  commutes with the actions of $K_\bC^t$ and
\begin{equation} \label{eq22}
T((k,\beta(k)) z) = \beta(k) z 
\end{equation}
for all $z \in Z_y$ and $(k,\beta(k)) \in S_y = K_{x} \times_\beta
K_{x'}'$.
\end{enumerate}
\end{lemma}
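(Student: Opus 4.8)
\textbf{Proof proposal for Lemma~\ref{lem:geo.I}.}
The plan is to make everything concrete using the explicit matrix descriptions of the moment maps $\psi^+,\psi^-,\psi_2$ promised in \Cref{sec:B}. Recall that $W^+ \cong \Hom(\bC^p,\bC^{2n})$-type data for the pair $(G^{p,0},G')$, $W^- $ plays the analogous role for $(G^{0,q},G')$, and $W_2$ for $(K^t,G')$; the moment map $\psi$ sends a tuple of linear maps to the associated ``contraction'' into $(\fpp'^\pm)^*$. First I would fix, once and for all, a model point $x \in \cO = \cO_{p,q,t}$ together with a preimage $(y^+,y^-) = y \in \pr(\bcN)$ with $\phi(y) = x$, chosen in a normal form adapted to the signed Young diagram $3_+^d 2_+^{n-d} 2_-^{n-d} 1_+^{p-2n} 1_-^{q+d-2n}$. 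Since $q \geq n$ (Case~I), $\psi^-$ is surjective, so $x' := -\psi^-(y^-)$ is a well-defined point in $(\fpp'^-)^*$, and one checks directly from the normal form that $x' \in \cO'_d$; this is the bookkeeping that links the two sides.

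For part~(i), I would argue as follows. The fiber $\phi^{-1}(x)$ is a single $K_\bC$-orbit? — no, not quite; rather $\phi^{-1}(x)\cap\bcN$ behaves like the corresponding statement in the stable-range Lemma of \Cref{S2} (the one just before \eqref{fig:orbchar1}). The point is that $\pr(\bcN) = \psi^{-1}(\bcOp)$ by the computation inside the proof of \Cref{lem:orbit.O1}, so $Y = \phi^{-1}(x) \cap \psi^{-1}(\bcOp)$. On this set $K_\bC$ acts through its action on $W$, and $K'_\bC$ acts on the $W^-$-factor; transitivity of $K_x \times K'_\bC$ on $Y$ is then a linear-algebra statement about simultaneous $K_\bC \times K'_\bC$-orbits of pairs of maps with prescribed rank/contraction data, which one reduces to the transitivity already known for the pieces. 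Concretely: given $y_1 = (y_1^+,y_1^-) \in Y$, surjectivity of $\psi^-$ lets one move $y_1^-$ to $y^-$ by an element of $K'_\bC$ (the fiber of $\psi^-$ over a fixed point in $\bcOp$ is a single $K'_\bC$-orbit), and then the residual freedom is exactly $K_x$ acting on $y^+$; one checks $K_x$ acts transitively on the relevant set of $y^+$'s with $\phi^+(y^+) = $ (the $W^+$-part of $x$). This gives $Y \cong (K_x \times K'_\bC)/S_y$.

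For part~(ii), I would compute $S_y = \Stab_{K_\bC \times K'_\bC}(y)$ directly. An element $(k,k')$ fixes $y = (y^+,y^-)$ iff $k$ fixes $y^+$ (so $k$ lies in a subgroup that maps isomorphically, via $\phi^+$, onto a subgroup of $K_x$, and in fact surjects onto $K_x$ because any element of $K_x$ lifts to something stabilizing $y^+$ after adjusting by the kernel) and $(k,k')$ fixes $y^-$. The second condition, since $\psi^-(y^-) = -x'$ and $K'_\bC$ acts freely on the $\psi^-$-fiber through $y^-$ over $x'$ (again the free-action statement from the stable-range lemma, applied to the pair $(G^{0,q},G')$ which is in stable range as $q\geq n$... here one must be slightly careful, but freeness of $K'_\bC$ on $(\psi^-)^{-1}(x')$ is exactly what surjectivity plus the orbit structure gives), forces $k'$ to be the \emph{unique} element $\beta(k) \in K'_\bC$ with $(k,\beta(k))y^- = y^-$. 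One then checks $\beta(k)$ actually lies in $K'_{x'} = \Stab_{K'_\bC}(x')$ because $\beta(k)$ fixes $y^-$ hence fixes $\psi^-(y^-) = -x'$. Uniqueness makes $\beta$ a homomorphism, and $S_y = \{(k,\beta(k)) : k \in K_x\} = K_x \times_\beta K'_{x'}$.

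For part~(iii), the fiber $Z_y = \pr^{-1}(y) \cap Z$ consists of points $(y,w_2) \in W \oplus W_2$ with $(y,w_2) \in \bcN$, i.e. $\psi^+(y^+) = 0$ (automatic) and $\psi^-(y^-) + \psi_2(w_2) = 0$, i.e. $\psi_2(w_2) = x'$; so $Z_y = \{y\} \times (\psi_2)^{-1}(x') = \{y\} \times Z_{x'}$, and $T$ is simply $(y,w_2) \mapsto w_2$, manifestly a bijection commuting with $K_\bC^t$ (which acts only on the $W_2$-factor). The equivariance formula \eqref{eq22} is immediate once one unwinds how $S_y = K_x \times_\beta K'_{x'}$ acts: on $Z_y \subset W \oplus W_2$ the element $(k,\beta(k))$ acts as $k$ on $W$ (fixing $y$) and as $\beta(k)$ on the $W_2$-factor (through the $K'_\bC$-action on $W_2$ coming from the pair $(K^t,G')$), so $T((k,\beta(k))\cdot(y,w_2)) = T(y, \beta(k)w_2) = \beta(k)w_2 = \beta(k)\cdot T(y,w_2)$.

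\textbf{Main obstacle.} The genuinely delicate point, which I expect to be the crux, is the \emph{freeness} assertion underlying both (i) and (ii): that $K'_\bC$ acts freely (equivalently, simply transitively) on the fiber $(\psi^-)^{-1}(x')$ through $y^-$ over the point $x' \in \cO'_d$. Unlike the stable-range situation in \Cref{S2} where one has $\min(p,q) \geq 2n$, here $(G^{0,q},G')$ need only satisfy $q \geq n$ (and $q < 2n$ is allowed), so $x'$ is the non-generic orbit $\cO'_d$ rather than a point in the open null cone, and freeness is not automatic — one must verify, from the explicit matrix form of $\psi^-$ and a point $y^-$ in normal form, that $\Stab_{K'_\bC}(y^-) = 1$. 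I would handle this by the explicit computation in \Cref{sec:B}: write $y^-$ as (essentially) an injective linear map in the relevant coordinates and observe that a $K'_\bC$-element fixing it must act trivially on its image, which spans enough of the space to force triviality. If direct freeness fails on the nose one falls back to the weaker statement that $K'_\bC \to (\psi^-)^{-1}(x')$, $k'\mapsto k'y^-$ is an isomorphism onto its (closed) orbit and that this orbit is all of the fiber — which is what the argument actually needs.
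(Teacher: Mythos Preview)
Your part~(iii) is exactly the paper's argument: $Z_y = \{y\}\times \psi_2^{-1}(x')$ and $T$ is projection to the $W_2$-factor. Part~(ii) has the right uniqueness mechanism (full rank of $y^-$ forces $y^- a^T = y^- b^T \Rightarrow a=b$), which is precisely what the paper uses; the paper, however, \emph{constructs} $\beta$ explicitly as $\beta(k^p,k^q) = \beta_{p,n}(k^p)$ via the parabolic $P_{p,n}\subset K_\bC^p$ stabilizing the column space of $y^+$, and then checks the $y^-$-equation, rather than trying to solve for $k'$ from the $y^-$-side as you suggest.

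The genuine gap is in part~(i), and your ``main obstacle'' is a symptom of a confusion about the group actions. The moment map $\psi^-\colon W^-\to(\fpp'^-)^*$, $w_1\mapsto w_1^T w_1$, is $K'_\bC$-\emph{equivariant} for the action $g\cdot S = gSg^T$ on $\Sym^n$, not $K'_\bC$-invariant; hence $K'_\bC$ does not act on a fiber $(\psi^-)^{-1}(x')$ at all, and your sentence ``the fiber of $\psi^-$ over a fixed point in $\bcOp$ is a single $K'_\bC$-orbit'' has no content. Your proposed strategy of first moving $y_1^-$ to $y^-$ by an element of $K'_\bC$ therefore cannot be executed as written.

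The paper's route to (i) runs through the $W^+$-side instead and needs nothing delicate. Since $x = y^+(y^-)^T$ has rank $n$, any $\tilde y = (\tilde y^+,\tilde y^-)\in Y$ has $\tilde y^+$ of rank $n$ with the same column space as $x$, hence as $y^+$; so $\tilde y^+ = y^+ k'$ for some $k'\in \GL(n,\bC)=K'_\bC$. Feeding this into $\phi(\tilde y)=\phi(y)$ and using that $y^+$ is injective gives $\tilde y^- (k')^T = y^-$, so in fact $(1,k')\cdot \tilde y = y$ already. (The paper phrases this as $Y\subset\cE$ and then invokes Lemma~\ref{LB1}(iii), but the mechanism is the same.) The only ``freeness'' used anywhere is the trivial observation that a rank-$n$ matrix $y^-\in M_{q,n}$ has trivial right $\GL(n,\bC)$-stabilizer; your worry about $(G^{0,q},G')$ not being in stable range when $q<2n$ is therefore beside the point.
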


\begin{proof}[Proof of \Cref{thm:LL}]
Let $\sO_Z$ denote the structure sheaf of $Z$. Clearly $\bC[Z] =
((\pr|_{Z})_* \sO_Z)(Y)$. By \Cref{lem:geo.I}(i) above, $Y$ is a
single $K_{x}\times K'_{\bC}$-orbit. Let $\cI_y$ be the ideal of $y$
in $\bC[Y]$. We recall that $Y$ is affine. Again by the generic reduceness
of scheme theoretical fiber in characteristic $0$, $\bC[Z_y] =
\bC[Z]/\cI_y\bC[Z]$.  Therefore by \cite{CPS},
\begin{equation}\label{eq:iso.Z.1}
\bC[Z] = \Ind_{S_y}^{K_{x}\times K'_\bC}  (i_y^* (\pr|_{Z})_*\sO_Z)
 = \Ind_{S_y}^{K_{x}\times K'_\bC} (\bC[Z]/\cI_y \bC[Z])
 = \Ind_{S_y}^{K_{x}\times K'_\bC} \bC[Z_y].
\end{equation}
Putting \eqref{eq:iso.Z.1} into \eqref{eq:irep.1}, we have
\begin{eqnarray*} 
  i_{x}^*\sB & = & \left(\bC[Z] \otimes A \otimes \tau
  \right)^{K_\bC' \times K_\bC^t} = \left(\Ind_{S_y}^{K_x \times K'_\bC} \bC[Z_y]
    \otimes A \otimes \tau \right)^{K'_\bC\times K_\bC^t}  \\
& = & \left(\Ind_{S_y}^{K_x \times K'_\bC}(\bC[Z_{y}] \otimes
    \tau)^{K_\bC^t} \otimes A \right)^{K'_\bC}.
\end{eqnarray*}
By \Cref{lem:geo.I}(iii), $T$ induces an isomorphism $(\bC[Z_{y}]
\otimes \tau)^{K^t} = (\bC[Z_{x'}] \otimes \tau)^{K^t}$ of
$S_y$-modules through $\beta$.

By \eqref{eq17c}, $(\bC[Z_{x'}] \otimes \tau)^{K^t} \cong \chi_{x'}$.
Moreover, $A = \varsigma_H|_{\wtK'} \otimes \sigma'^* \cong
\varsigma|_{\wtK'}\otimes \varsigma_2|_{\wtK'}$
Hence 
\begin{equation}\label{eq:chix.I}
\begin{split}
  \chi_x = i_x^*\sB =& \left(\Ind_{S_y}^{K_x\times K'_\bC} (\chi_{x'}
    \otimes A) \right)^{K'_\bC} \cong (\chi_{x'}\otimes A) \circ \beta
  = (\tchi_{x'}\otimes \varsigma|_{\wtK'})\circ \beta
\end{split}
\end{equation}
as a representation of $K_{x}$.  The isotropy representation of
$\Gr\theta(L(\mu'))$ at $x$ is
\[
\tchi_{x} = \varsigma|_{\wtK}\otimes i_x^*\sB =
\varsigma|_{\wtK}\otimes (\tchi_{x'}\otimes \varsigma|_{\wtK'})\circ
\beta.
\]
This proves (iii). 

Suppose that $L(\mu')\neq 0$.  By \Cref{T52}, $\tchi_{x'}\neq 0$ so
$\tchi_{x}\neq 0$.  This implies that $\theta^{p,q}(L(\mu')) \neq 0$
and proves (i). We have seen before that $\Supp(\sB) \subseteq
\bcO$. Since $\chi_{x}\neq 0$, $x \in \Supp(\sB)\subset \bcO$ and
$K_\bC \cdot x = \cO$. Hence $\AV(\theta^{p,q}(L(\mu')) = \bcO$. This
proves (ii). Part~(iv) is an immediate consequence of (ii) and
(iii). This completes the proof of \Cref{thm:LL}.
\end{proof}

\subsection{Case II} \label{S58} We assume that $\psi^- : W^-
\rightarrow (\fpp'^-)^*$ is not surjective, i.e. $t> n > q$.  Then
$K_{x}$ has a Levi decomposition $K_{x} = L\ltimes N$ (see p. 184
in~\cite{Hu}) where $L$ is the Levi part and $N$ is the unipotent
part.  By the calculation in \Cref{SB12}, we have $L \cong \diag
K_\bC^q\times K_\bC^{p-2q}$.

We would like to mimic \eqref{eq18} in Case I which constructs an
orbit $Y$. More precisely we will construct in \Cref{SB12} an (affine)
algebraic set $\cM$ and a surjective algebraic morphism $\pi : Z
\rightarrow \cM$ with the following properties:
\begin{enumerate}[(A)]
\item \label{item:ind.A} Let $Q = L \times K_\bC^t \times K_\bC'
  \simeq \diag K_\bC^q\times K_\bC^{p-2q} \times K_\bC^t \times
  K_\bC'$. There is a $Q$-action on $\cM$ such that $\pi$ is $Q$-equivariant.

\item The set $\cM$ is an $Q$-orbit. Fix $m\in \cM$.  We form
  the following  set theoretical fiber:
\[
\xymatrix{
\bcN_s&  \ar[l]^{\cong}_T Z_m \ar@{^(->}[r] \ar[d] & Z \ar[d]^{\pi} \\
&\Set{ m } \ar@{^(->}[r] & \cM
}
\]
By the same argument as in the proof of \Cref{L55}, $Z_m$ is equal to
the scheme theoretical fiber $Z \times_{\cM} \Set{m}$ because the
latter is reduced.

\item Let $Q_m$ be the stabilizer of $m$ in $Q$.
We set $K_s = K^{p-2q} \times K^{t-q}$ and
   $K_s'\cong \rU(n-q)$.
 Then 
\[
\begin{split}
  Q_m =& \diag' K_\bC^q \times K_\bC^{p-2q} \times K_\bC^{t-q} \times
  K'^{n-q}_\bC \\
  \cong & \triangle' K_{\bC}^q \times K_{s \bC} \times K_{s \bC}'
  \cong L \times_{\gamma} K_{s \bC} \times K'_{s \bC}
\end{split}
\]
Here $\diag' K_\bC^q$ embeds diagonally into $\diag K_\bC^q \times
K_\bC^t \times K'_\bC$, $K_\bC^{t-q}$ lies in $K_\bC^t$ and
\[
\gamma\colon L = \diag K_{\bC}^q \times K_\bC^{p-2q} \longrightarrow K_\bC^{p-2q}
\subset {K_s}_\bC
\]
 is the natural projection.

\item \label{item:ind.D} Let $(G_s, G_s') =
  (\rO(p-2q,t-q),\Sp(2n-2q))$ be the dual pair with maximal compact
  subgroup $(K_s,K_s')$ as in (C). Then $(G_s,G_s')$ is in the stable
  range (c.f. \eqref{eq:range.C}). We put subscript $s$ for all
  objects with respect to this pair. For example, $\sigma'_s$ denotes
  the genuine character of $\wtG_s'$.  Correspondingly we have the
  closed null cone $\bcNs =\psi_s^{-1}(0)$.

\item Let $Q_m$ act on $\bcNs$ with trivial $\diag' K_\bC^q$-action.
  Then there is a $Q_m$-equivariant bijection $T : Z_m \rightarrow
  \bcNs$.
  \item \label{item:ind.Z2} By (B), we have $\cM \simeq Q/Q_m$ and
\begin{equation}\label{eq:iso.Z.2}
  \bC[Z] = \Ind_{Q_m}^Q \bC[Z_m]  = \Ind_{Q_m}^Q \bC[\bcNs].
\end{equation}

\end{enumerate}
The proofs of (A) to (E) are given in \Cref{SB12}. Part
\eqref{item:ind.Z2} follows from~\cite{CPS} using a similar argument
as \eqref{eq:iso.Z.1}.

\begin{proof}[Proof of \Cref{TC}]
Putting \eqref{eq:iso.Z.2} into \eqref{eq:irep.1}, the isotropy
representation of $B$ at $x$ is
\begin{equation}\label{eq:chix.II}
\begin{split}
\chi_x|_{L} = i_{x}^*\sB|_{L} =& \left(\Ind_{Q_m}^{L\times  K'_\bC\times
    K_\bC^t} \bC[\bcNs]\otimes A \otimes \tau \right)^{K'_\bC\times K^t_\bC}\\
=& \left( (\bC[\bcNs] \otimes A|_{K'_s})^{K'_s} \otimes \tau \right)^{K_\bC^{t-q}}.
\end{split}
\end{equation}

Note that $A|_{K'_s}\cong \varsigma_s\otimes \sigma'^*_s$,
$\varsigma|_{\wtK_s} = \varsigma_s$, $\mu|_{\wtK^{t-q}\times \wtK^{q}}
= \varsigma^*|_{\wtK^{t-q}\times \wtK^{q}}\otimes
\tau|_{K_\bC^{t-q}\times K^{q}}$. By \Cref{cor:iso.c}, $ (\bC[\bcNs]
\otimes A|_{K'_s})^{K'_s} = B_s$.  Finally, we get
\[
\begin{split}
  \tchi_x|_{\wtL} =& \varsigma|_{\wtL}\otimes
  (\varsigma_s^*|_{\wtK^{p-2q}\times\wtK^{t-q}}\otimes\theta^{p-2q,t-q}(\sigma'_s)\otimes
  \tau|_{K^{t-q}\times K^{q}})^{K^{t-q}}\\
  =& (\theta^{p-2q,t-q}(\sigma'_s) \otimes \mu|_{\wtK^{t-q}\times
    \wtK^{q}})^{K^{t-q}} \otimes \varsigma^2|_{K^{q}}.
\end{split}
\]
as a representation of $\wtK^{q}\times \wtK^{p-2q} \twoheadrightarrow
\wtL$. This proves (ii).

By \eqref{eq:chix.II}, $\theta^{p,q}(L(\mu')) = 0$ implies that
$\tchi_x = 0$. Conversely if $\theta^{p,q}(L(\mu'))$ is non-zero then
by \Cref{T61} below (whose proof does not depend on the result of this
subsection) $\tchi_x$ is non-zero.  This proves (iii).

The proof of (iv) is the same as that of \Cref{thm:LL}~(ii)~and~(iv).  We have
$\Supp(\sB) \subseteq \bcO$. If $\theta^{p,q}(L(\mu')) \neq 0$, then
$\tchi_{x}\neq 0$ so $x \in \Supp(\sB)\subset \bcO$ and $K_\bC \cdot x
= \cO$. Hence $\AV(\theta^{p,q}(L(\mu')) = \bcO$. This proves (iv).
This completes the proof of \Cref{TC}.
\end{proof}

\subsection{Proof of \Cref{thm:KS}} \label{sec:kspec} In this section,
we assume the notation of the Sections~\ref{S57} and \ref{S58}: $x \in
\cO = \cO_{p,q,t}=\theta(\cO',G',G)$ and $B = \varsigma^*|_{\wtK}
\otimes \Gr(\theta(L(\mu'))$. Since $\theta^{p,q}(L(\mu')) \cong
\varsigma|_{\wtK} \otimes B$, \Cref{thm:KS} follows from Proposition
\ref{T61}(ii) below.  We emphasize that the proof of the proposition
is independent of the calculations of $\chi_x$ in \Cref{S57} and
\Cref{S58}.

\begin{prop} \label{T61} Suppose that $p,q,t$ satisfies \eqref{eq16}
  and $n> \min\set{q,t}$. Let $\chi_x$ be the $K_x$-module calculated
  in \eqref{eq:chix.I} and \eqref{eq:chix.II}. 
\begin{enumerate}[(i)]
\item We have $\sB = (i_\cO)_* \sL$ as $(\sO_{\bcO},K_\bC)$-modules
where $i_\cO\colon
\cO\hookrightarrow \bcO$ is the natural open embedding and $\sL$ is
the $K_\bC$-equivariant coherent sheaf with fiber $\chi_x$ at $x$ in
sense of~\cite{CPS}. 

\item  As $K_\bC$-modules,
\[
B = \Ind_{K_x}^{K_\bC} \chi_x.
\]
In particular, $B \neq 0$ if and only if $\chi_x \neq 0$.
\end{enumerate}
\end{prop}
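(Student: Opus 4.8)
The plan is to deduce (ii) from (i) and to prove (i) by a dévissage to the enveloping stable‑range pair $(H,G')=(\rO(p,q+t),\Sp(2n,\bR))$, for which \Cref{thm:ACchar} already identifies the relevant sheaf. Granting (i), statement (ii) is formal: applying $\Gamma(\bcO,-)$ to $\sB\cong(i_\cO)_*\sL$ gives $B=\Gamma(\bcO,\sB)=\Gamma(\cO,\sL)$, and by the equivalence of categories of \cite{CPS} between $K_\bC$‑equivariant quasi‑coherent sheaves on $\cO\cong K_\bC/K_x$ and rational $K_x$‑modules one has $\Gamma(\cO,\sL)=\Ind_{K_x}^{K_\bC}\chi_x$; since $\Ind_{K_x}^{K_\bC}$ is faithful, $B=0$ iff $\chi_x=0$. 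So the content lies entirely in (i).

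For (i), recall from \Cref{thm:LL} and \Cref{TC} that $\cO$ is the unique open $K_\bC$‑orbit in $\bcO=\AV(\theta^{p,q}(L(\mu')))$, and that $i_x^*\sB=\chi_x$ was computed in \eqref{eq:irep.1} (and identified in \eqref{eq:chix.I}, \eqref{eq:chix.II}). Hence \cite{CPS} gives $\sB|_\cO\cong\sL$, the $K_\bC$‑equivariant sheaf on $\cO$ with fibre $\chi_x$ at $x$, and there is a canonical adjunction morphism $j\colon\sB\to(i_\cO)_*(\sB|_\cO)=(i_\cO)_*\sL$ of $(\sO_{\bcO},K_\bC)$‑modules. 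Since $\bcO$ is affine and both $\sB$ and $(i_\cO)_*\sL$ are quasi‑coherent, it suffices to prove that $\Gamma(j)\colon B\to\Gamma(\cO,\sL)$ is an isomorphism.

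To compute $\Gamma(j)$, combine \Cref{lem:res.c} with \Cref{cor:iso.c} for $(H,G')$ to write $B=(B_H\otimes\tau)^{K^t}=(\bC[\bcN]\otimes A\otimes\tau)^{K'_\bC\times K_\bC^t}$, where $\bcN$ is the null cone of $(H,G')$ as in \Cref{S53}. Put $U:=(q_H|_{\bcN})^{-1}(\cO)$, an open $K_\bC$‑stable subset of $\bcN$; being an equivariant preimage of the homogeneous space $\cO\cong K_\bC/K_x$ (see \Cref{lem:orbit.O1}), it is an associated bundle $U\cong K_\bC\times_{K_x}Z$ over $\cO$ with fibre $Z=q_H^{-1}(x)\cap\bcN$, so $\bC[U]=\Ind_{K_x}^{K_\bC}\bC[Z]$. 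Commuting the $K_\bC$‑induction past the (reductive, mutually commuting) $K'_\bC\times K_\bC^t$‑invariants and using \eqref{eq:irep.1},
\[
(\bC[U]\otimes A\otimes\tau)^{K'_\bC\times K_\bC^t}=\Ind_{K_x}^{K_\bC}\bigl((\bC[Z]\otimes A\otimes\tau)^{K'_\bC\times K_\bC^t}\bigr)=\Ind_{K_x}^{K_\bC}\chi_x=\Gamma(\cO,\sL),
\]
and, since restriction to an open commutes with push‑forward, $\Gamma(j)$ is the map induced by the restriction of regular functions $\bC[\bcN]\to\bC[U]$. Now let $\cN_H:=\phi_H^{-1}(\cO_H)$ be the open null cone of $(H,G')$; since $\pr_H(\cO_H)=\cO$ by \Cref{lem:orbit.O1}, we have $\cN_H\subseteq U\subseteq\bcN$, and $\cN_H$ is dense in $\bcN$ (stable range), so both inclusions are dense and the restriction maps $\bC[\bcN]\to\bC[U]\to\bC[\cN_H]$ are injective. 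Applying the exact functor $(-\otimes A)^{K'_\bC}$, the composite $(\bC[\bcN]\otimes A)^{K'_\bC}\to(\bC[\cN_H]\otimes A)^{K'_\bC}$ is an isomorphism by \Cref{prop:iso.cN} for $(H,G')$; as both factors remain injective, each is an isomorphism, in particular $(\bC[\bcN]\otimes A)^{K'_\bC}\to(\bC[U]\otimes A)^{K'_\bC}$ is. Applying the exact functor $(-\otimes\tau)^{K_\bC^t}$ then shows $\Gamma(j)$ is an isomorphism, proving (i). (When $\min(p,q+t)>2n$ one needs nothing beyond normality: there $\bcN$ is normal and $\bcN\setminus\cN_H$ has codimension $\ge 2$, so $\bC[\bcN]=\bC[U]=\bC[\cN_H]$ at once, cf. the remark after \Cref{prop:iso.cN}.)

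The main non‑formal ingredient is this last step: one must verify that the stable‑range ``normality after $K'_\bC$‑invariants'' isomorphism of \Cref{prop:iso.cN} genuinely applies to the enlarged pair $(H,G')$ — it does, by \Cref{thm:ACchar} and \cite{Y} — and that it propagates through the intermediate open $U$ that lies over the \emph{smaller} orbit closure $\bcO\subseteq\fpp^*$; concretely this rests on the density of $\cN_H$ in $\bcN$ (irreducibility of the stable‑range null cone), the associated‑bundle identification $U\cong K_\bC\times_{K_x}Z$, and the compatibility of the $\bC[\bcO]$‑ and $\bC[\overline{\cO_H}]$‑module structures supplied by \Cref{lem:orbit.O1}. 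The remaining reductions — the equivalence of categories in (ii), the quasi‑coherence reductions, and the commuting of induction with invariants — are routine.
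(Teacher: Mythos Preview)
Your argument contains a genuine error at the step ``since $\pr_H(\cO_H)=\cO$ by \Cref{lem:orbit.O1}, we have $\cN_H\subseteq U\subseteq\bcN$.'' \Cref{lem:orbit.O1} asserts only $\pr_H(\overline{\cO_H})=\overline{\cO}$ for the \emph{closures}; it does not give $\pr_H(\cO_H)\subseteq\cO$, and in fact this inclusion is false. Concretely, in the paper's notation $\cD:=(\phi\circ\pr)^{-1}(\cO)\cap\cN$ (which equals $\cN_H\cap U$ in your notation) is described explicitly in \eqref{eq:cD} by rank conditions on $w_1$ and $w_1^Tw_1$, and is a \emph{proper} open subset of $\cN_H$; the computation in \Cref{SB5} gives $\codim_{\cN_H}(\cN_H\setminus\cD)=1+n-\min\{q,t\}$ in the relevant case. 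So your sandwich $\cN_H\subseteq U$ simply does not hold, and the factorization $(\bC[\bcN]\otimes A)^{K'_\bC}\to(\bC[U]\otimes A)^{K'_\bC}\to(\bC[\cN_H]\otimes A)^{K'_\bC}$ does not exist.

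A telling symptom is that your argument never uses the hypothesis $n>\min\{q,t\}$; if it were correct it would prove the proposition unconditionally, contradicting the role of that hypothesis in \Cref{thm:KS}. The paper's proof runs as follows: first use \Cref{prop:iso.cN} (as you do) to get $B=(\bC[\cN]\otimes A\otimes\tau)^{K'_\bC\times K_\bC^t}$ with $\cN=\cN_H$ the open null cone; then identify $\sB(\cO)=(\bC[\cD]\otimes A\otimes\tau)^{K'_\bC\times K_\bC^t}$ via flat base change along $i_\cO$; finally invoke \Cref{L62}, which shows (using $n>\min\{q,t\}$) that $\cN\setminus\cD$ has codimension $\geq 2$ in the homogeneous space $\cN$, so that Theorem~4.4 of \cite{CPS} gives $\bC[\cN]\xrightarrow{\sim}\bC[\cD]$. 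Your associated-bundle identification $U\cong K_\bC\times_{K_x}Z$ and the commutation of induction with invariants are fine and recover $\sB(\cO)=\Ind_{K_x}^{K_\bC}\chi_x$, but to compare with $\sB(\bcO)$ you must go through $\cD$ and the codimension estimate, not through a nonexistent inclusion $\cN_H\subseteq U$.
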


\begin{proof}
  By definition $\sB(\bcO) = B = \varsigma^*|_{\wtK} \otimes
  \theta(L(\mu'))$. On the other hand, $i_\cO^* \sB\cong \sL$ and
  $\sB(\cO) = (i_\cO)_* \sL(\bcO)= \Ind_{K_x}^{K_\bC}\chi_x$ by
  \cite{CPS}. We will show (ii), i.e. $\sB(\bcO) = \sB(\cO)$ under the
  restriction map. Then (i) follows because both $\sB$ and $(i_\cO)_*
  \sL$ are quasi-coherent sheaves over an affine scheme with the same
  space of global sections.

  By \Cref{lem:res.c} and \Cref{prop:iso.cN}
\[
\sB(\bcO) = (\bC[\bcN]\otimes A \otimes \tau)^{K'_\bC\times K_\bC^t} =
(\bC[\cN]\otimes A \otimes \tau)^{K'_\bC\times K_\bC^t}.
\]

On the other hand, since $\fpp$ is $K'_\bC\times K_\bC^t$-invariant,
localization commutes with taking $K'_\bC\times K_\bC^t$-invariants.
Let $\cD = (\phi \circ \pr)^{-1}(\cO)\cap \cN$ and we consider
\[
\xymatrix{
\cD\ar@{^(->}[r]^{i_\cD}\ar[d]_{\phi\circ \pr|_\cD}& \cN\ar[d]^{\phi \circ \pr}\\
\cO \ar@{^(->}[r]^{i_{\cO}}& \bcO.
}
\]
Since $i_{\cO}$ is an open embedding, it is flat and we have
(c.f. Corollary 9.4 in \cite{Ha})
\[
\Gamma(\cO, i_{\cO}^*(\phi \circ \pr)_*\sO_\cN) = 
\Gamma(\cO, (\phi\circ \pr|_{\cD})_*i_\cD^*\sO_\cN) = 
\Gamma(\cD,\sO_{\cD}) = \bC[\cD].
\]
This gives $\sB(\cO) = (\bC[\cD]\otimes A \otimes
\tau)^{K'_\bC\times K_\bC^t}$.  Therefore, it suffices to show that
\begin{equation}\label{eq:D.iso}
  H^0(\cN,\sO_\cN) = \bC[\cN] \to \bC[\cD] = H^0(\cD,\sO_\cN)
\end{equation}
is an isomorphism.  

\begin{lemma} \label{L62} Suppose $n > \min \Set{ q,t }$, then
  $\cD$ is a $K_\bC \times K_\bC' \times K_\bC^t$-orbit and
  $\partial \cD =
  \cN - \cD$ has codimension at least $2$ in~$\cN$.
\end{lemma}

The proof of the lemma is given in \Cref{SB5}.

We continue with the proof of the theorem.  We note that $\cN$ is a
$(K_H)_\bC \times K_\bC'$-orbit and $\cD$ is a $K_\bC \times K_\bC^t
\times K_\bC'$-orbit. Now \eqref{eq:D.iso} follows from Theorem 4.4 in
\cite{CPS}.  This proves \Cref{T61}.
\end{proof}

\section{Special unipotent representations} \label{S6}

We will briefly review special unipotent primitive ideals and
representations in Chapter~12 in \cite{V2}. Also see Section 2.2 in
\cite{T}. We will apply these to $\theta^{p,q}(L(\mu'))$. 
  
Let $\fgg = \frakso(p+q,\bC)$. We consider $\theta^{p,q}(L(\mu'))$
where $q > n > t$, $q + t \geq 2n$, $\dim \mu = 1$ and $L(\mu') =
\theta(\mu)$.  The infinitesimal character of $\theta^{p,q}(L(\mu'))$
corresponds to the weight $\lambda = (\delta_{t}, \delta_{p+q-2n},
\delta_{2n-t+2})$ under the Harish-Chandra parametrization
\cite{Lo}. Here $\delta_N = (\frac{N}{2}-1, \frac{N}{2}-2, \ldots)$
denotes the half sum of the positive roots of $\frakso(N)$, and we
insert or remove zeros from $\lambda$ if the string of numbers is too
short or too long. The restriction of $\theta(L(\mu'))$ to
$(\fgg,(\wtK^{p,q})^0)$ decomposes into a finite number of irreducible
$(\fgg,(\wtK^{p,q})^0)$-submodules.  Let~$\theta(L(\mu'))^0$ denote
one of these irreducible $(\fgg,(\wtK^{p,q})^0)$-submodules. Since
$\cO = \cO_{p,q,t}$ is also a $(\wtK^{p,q})^0$-orbit,
$\theta(L(\mu'))^0$ also have associated variety $\cO$.

We claim that the weight $\lambda$ also represents the infinitesimal
character of $\theta(L(\mu'))^0$ as a
$(\fgg,(\wtK^{p,q})^0)$-module. Indeed we may have an ambiguity only
if $p+q$ is even where the infinitesimal character is either $\lambda$
or $s(\lambda)$. Here $s$ is the involution induced by the outer
automorphism of $\fgg$. In this case $t$ is even and the weight
$\lambda$ contains a zero in the string of numbers. Hence $\lambda$ and
$s(\lambda)$ represent the same infinitesimal character and proves our
claim.

Under the Kostant-Sekiguchi correspondence, $\calO$ generates a
nilpotent $\SO(p+q,\bC)$-orbit $\calC$ in $\fgg^*$. The orbit
$\calC$ has the same Young diagram as that of $\calO$ in
\eqref{eqP16} less the plus and minus signs. Let $J$ denote the
primitive ideal of $\theta(L(\mu'))^0$ in $U(\fgg)$. The ideal~$J$
has a filtration $\{ J_s = U_s(\fgg) \cap J : s \in \bN
\}$. By \cite{VoAss}, $\Gr(J)$ cuts out the variety
$\overline{\calC}$ in~$\fgg^*$.
  
Let $\Phi$ and $R$ be the roots and the root lattice of $\fgg$.  Let
$\fgg^\vee$ denote the simple Lie algebra with $\Phi$ as coroots. In
particular $\fgg^\vee = \frakso(p+q,\bC)$ if $p+q$ is even and
$\fgg^\vee = \fraksp(p+q-1,\bC)$ if $p+q$ is odd. We refer to
\cite{T2} on the order reversing map $d$ (resp. $d^\vee$)
from the set of complex nilpotent orbits in $\fgg^*$
(resp. $(\fgg^\vee)^*$) to the complex nilpotent orbits in
$(\fgg^\vee)^*$ (resp. $\fgg^*$).  The orbit $\calC$ is called special
if it is in the image of $d^\vee$ and for a special orbit $\calC$, we
have $d^\vee(d(\calC)) = \calC$.
  
Suppose $\calC$ is a special orbit. By the Jacobson-Morozov theorem,
let $\{ X^\vee, H^\vee, Y^\vee \}$ be the $\fraksl_2$-triple such that
$X^\vee \in d(\calC)$. We may assume that $\frac{1}{2} H^\vee$ lies in
$\bC \otimes_\bZ R$. In this way~$\frac{1}{2} H^\vee$ defines an
infinitesimal character of $U(\fgg)$ via the Harish-Chandra
homomorphism. If $d(\calC)$ has Young diagram $(a_1, a_2, \ldots,
a_s)$ then $\frac{1}{2} H^\vee = (\delta_{a_1+1}, \delta_{a_2 + 1},
\ldots, \delta_{a_s + 1})$.

Suppose $\frac{1}{2} H^\vee$ gives the same infinitesimal character
$\lambda$ as that of $\theta(L(\mu'))^0$. Let $J_\lambda$ denote the unique
maximal primitive ideal of $U(\fgg)$ with infinitesimal character
$\frac{1}{2} H^\vee$ \cite{Du}. We will call~$J_\lambda$ a special unipotent primitive
ideal. By Corollary A3 in \cite{BV}, the variety cut out by $\Gr(J_\lambda)$
in $\fgg^*$ is the same as that of $\Gr(J)$, namely
$\overline{\calC}$. By Corollary 4.7 in \cite{BK}, $J_\lambda = J$. We say
that~$\theta(L(\mu'))^0$ is a special unipotent representation.

\begin{prop} \label{P86} Suppose $q + t \geq 2n$, $q \geq n \geq t$ and
  $\dim \mu = 1$. Then $\calC$ is a special orbit and
  $\theta(L(\mu'))^0$ is a special unipotent representation.
\end{prop}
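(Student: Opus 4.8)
The plan is to verify, via explicit combinatorics of partitions, the two conditions that were set up in the discussion preceding the proposition: first, that the nilpotent orbit $\calC$ of $\SO(p+q,\bC)$ attached to $\calO_{p,q,t}$ under Kostant--Sekiguchi is \emph{special} (i.e. lies in the image of the duality map $d^\vee$, equivalently $d^\vee(d(\calC))=\calC$); and second, that the element $\tfrac12 H^\vee$ built from $d(\calC)$ produces exactly the infinitesimal character $\lambda=(\delta_t,\delta_{p+q-2n},\delta_{2n-t+2})$ recorded for $\theta^{p,q}(L(\mu'))^0$. Once these two points are checked, the argument already assembled in the text---the chain $\Gr(J)$ cuts out $\overline{\calC}$ (by \cite{VoAss}), $\Gr(J_\lambda)$ cuts out the same variety (Corollary A3 of \cite{BV}), hence $J_\lambda=J$ (Corollary 4.7 of \cite{BK})---immediately gives that $\theta^{p,q}(L(\mu'))^0$ is special unipotent.

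First I would read off the partition of $\calC$ from \eqref{eqP16}: dropping the signs from $\calO_{p,q,t}=3_+^{t}2_+^{n-t}2_-^{n-t}1_+^{p-2n}1_-^{q+t-2n}$ (recall $d=\min(t,n)=t$ in the range $q\ge n\ge t$) gives the $\frakso(p+q,\bC)$-partition $\calC = (3^{t},\,2^{2(n-t)},\,1^{p+q+t-4n})$. One checks this is a genuine orthogonal partition: the odd parts $3$ and $1$ occur with multiplicities $t$ and $p+q+t-4n$, and since $p+q+t$ is even (by \eqref{eq16}) these multiplicities have the correct parities for type $B$ or $D$ as appropriate. Next I would compute $d(\calC)$ in $\fgg^\vee$ (type $C$ when $p+q$ is odd, type $D$ when $p+q$ is even) by transposing $\calC$ and then applying the $B/C/D$-collapse; the transpose of $(3^t,2^{2(n-t)},1^{p+q+t-4n})$ is $(2n-t+ (p+q+t-4n),\,2n-t,\,t) = (p+q-2n,\,2n-t,\,t)$ before collapse, and I expect that after the symplectic/orthogonal collapse this stays essentially $(p+q-2n,\,2n-t,\,t)$ up to the usual adjustment by $\pm1$ in the last two entries. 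The key bookkeeping is then that $\tfrac12 H^\vee = (\delta_{a_1+1},\delta_{a_2+1},\delta_{a_3+1})$ for $d(\calC)=(a_1,a_2,a_3)$ produces precisely $(\delta_{p+q-2n+1},\delta_{2n-t+1},\delta_{t+1})$, and this must be reconciled with $\lambda=(\delta_t,\delta_{p+q-2n},\delta_{2n-t+2})$ as Harish-Chandra parameters --- i.e. as multisets of coordinates up to sign and the Weyl-group action, after the usual normalization of inserting/removing zeros. This matching of the two unordered tuples of $\delta$-strings is exactly the computation the text has been leading up to, and it is where the constraints $q+t\ge 2n$, $q\ge n\ge t$ are used to guarantee all the strings have nonnegative length and the collapses do not degenerate.

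Finally, specialness: I would either quote the classification of special orbits in classical types (Lusztig: in type $D$ a partition is special iff its transpose is again an orthogonal partition; in type $B$/$C$ the analogous criterion), and verify directly that $\calC=(3^t,2^{2(n-t)},1^{p+q+t-4n})$ satisfies it --- the even part $2$ has even multiplicity $2(n-t)$, which is the crucial condition --- or, more in the spirit of the surrounding text, observe that $\calC$ arises as $\theta(\calO'_d)$ with $\calO'_d$ itself special (being the lift of the zero orbit under a compact dual pair, a Richardson-type orbit), and invoke the known compatibility of theta-lifting of orbits with the duality map $d^\vee$ in this range. The main obstacle I anticipate is purely combinatorial stamina: keeping track of the collapse operation in the correct ambient type as $p+q$ toggles between even and odd, and matching the $\delta$-strings after the zero-insertion normalization, since off-by-one errors in $2n-t+1$ versus $2n-t+2$ are easy to make; there is no conceptual difficulty once the partition arithmetic is pinned down.
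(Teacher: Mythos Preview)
Your approach is exactly the paper's: read off the partition of $\calC$, compute $d(\calC)$, extract $\tfrac12 H^\vee$ via the $\delta$-string recipe, match it with $\lambda$, and then invoke the argument already assembled in the text. The paper simply cites \cite{CM} p.~100 for specialness and states $d(\calC)$ outright rather than deriving the collapse.

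One concrete correction, though: your guess that the collapse affects only the last two entries is wrong --- all three entries shift, and when $p+q$ is odd the target $\fgg^\vee=\fraksp(p+q-1,\bC)$ forces the total size to drop by one. The actual formula (as the paper records) is
\[
d(\calC)\;=\;(\,p+q-2n-1,\;2n-t+1,\;t-1,\;\epsilon\,),\qquad
\epsilon=\begin{cases}0&t\text{ odd (type }C),\\ 1&t\text{ even (type }D),\end{cases}
\]
whence $\tfrac12 H^\vee=(\delta_{p+q-2n},\,\delta_{2n-t+2},\,\delta_{t},\,\delta_{\epsilon+1})$; since $\delta_1$ is empty and $\delta_2=(0)$, the zero-removal convention yields $\lambda$ on the nose. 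Your tentative $\tfrac12 H^\vee=(\delta_{p+q-2n+1},\,\delta_{2n-t+1},\,\delta_{t+1})$, computed from the \emph{un-collapsed} transpose $(p+q-2n,\,2n-t,\,t)$, does not match $\lambda$, so the collapse is not cosmetic bookkeeping but the step that makes the whole thing work. Once you carry it out correctly the off-by-one worry you flag disappears. (A minor aside: the condition ``the even part $2$ has even multiplicity $2(n-t)$'' that you cite is what makes $\calC$ a valid orthogonal partition, not what makes it special; specialness requires the transpose criterion, or just the citation to \cite{CM}.)
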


\begin{proof}
  The orbit is special could be read off from page 100 in \cite{CM}.
  Indeed $d(\calC) = (p+q-2n-1,2n-t+1,t-1,\epsilon)$ where $\epsilon =
  0$ if $t$ is odd (i.e. $\fgg^\vee$ of type ${\mathrm{C}}$) and
  $\epsilon = 1$ if $t$ is even (i.e. $\fgg^\vee$ of type ${\mathrm{D}}$).
  Furthermore $d^\vee d(\calC) = \calC$. We have $\frac{1}{2} H^\vee =
  (\delta_{t}, \delta_{p+q-2n}, \delta_{2n-t+2}) = \lambda$ which is
  the infinitesimal character of $\theta(L(\mu'))^0$. The conclusion
  that $\theta(L(\mu'))^0$ is special unipotent follows from the
  discussion prior to the proposition.
\end{proof}

The above argument applies to $\theta(\sigma')$ too by setting $t =
0$. It is easier and we leave the details to the reader.

\section{Other dual pairs} \label{S7} Most of our methods and results
extend to the two dual pairs $(G^{p,q},G')$ in Table \ref{tab:ls}
below. We have omitted them in the main body of this paper in order to
keep this paper simple. In this section, we will briefly describe
these two dual pairs. 

\begin{table}[h,t]
  \centering \small \setlength{\tabcolsep}{2pt}
  \begin{tabular}{cc|c|c|c|c}
     $G^{p,q}$ & $G'$  & Stable range & Case I & Case II& $\codim \, \partial
    \cD \geq 2$\\
    \hline
 $\rU(p,q)$ & $\rU(n_1,n_2)$& $p,q \geq n_1+n_2$ & $q \geq
    n_1,n_2$ & $\max \Set{n_1,n_2} > q$ & $\begin{array}{l} \max\Set{n_1,n_2} > 
      \\ \min\Set{t,n_1,n_2} \end{array}$
    \\
    \hline
$\Sp(2p,2q)$ & $\rO^*(2n)$ & $p,q\geq n$ &  
    $2q \geq n$ &  $2q < n $ &
    $\begin{array}{c}n > 2t \\
      \text{or $n$ is odd}
    \end{array}
    $
  \end{tabular}
  \caption{List of dual pairs}
  \label{tab:ls}
\end{table}

There is also a notion of theta lifts of $K_\bC'$-orbits on
$\fpp'^*$ to $K_\bC$-orbits on $\fpp^*$, and conversely.

First we suppose $(G^{p,q},G')$ is in stable range where $G'$ is the
smaller member.  This condition is given in the second column of
\Cref{tab:ls}.  Let $\sigma'$ be a genuine unitary character
of~$\wtG'$. For the dual pair $(\Sp(2p,2q), \rO^*(2n))$, $\rO^*(2n)$
alway splits and $\sigma'$ is unique.  The local theta lift
$\theta^{p,q}(\sigma')$ is nonzero and unitarizable, and it also the
full theta lift (c.f. Proposition \ref{P32}). By an almost identical
proof as that of \Cref{thm:ACchar}, one shows that $\AC(\theta(
\sigma')) = 1 [\overline{\cO_{p,q}}]$ where $\cO_{p,q} =
  \theta(0;G',G^{p,q})$. Furthermore we have $\theta(\sigma')|_{\wtK}
= \Ind_{\wtK_x}^{\wtK_\bC} \tchi_x$ where $\tchi_x$ is the isotropy
representation \cite{Y}.

Next we consider the dual pair $(G^{t,0}, G')$. Let $\mu$ be an
irreducible genuine representation of $\wtG^{t,0}$ such that $L(\mu')
= \theta(\mu)$ is nonzero. Then $L(\mu')$ is a unitary lowest weight
Harish-Chandra module of~$\wtG'$. By \cite{Ya}, $\AV(L(\mu')^*)$ is
the Zariski closure of an orbit $\cO'$ in~$\fpp'^-$. The isotropy
representation $\tchi_{x'}$ of $L(\mu')^*$ at a closed point $x' \in
\cO'$ is also computed explicitly in \cite{Ya}.


Let $\sigma'$ be a genuine unitary character of $\wtG'$ for the dual
pair $(G^{p,q+t},G')$ in stable range.  The local theta lift
$\vartheta := \theta^{p,q}(\sigma'^* L(\mu'))$ to $\wtG^{p,q}$ is also
the full theta lift (c.f. Proposition~\ref{P24}). Again we have to
divide into Cases I and II as in Theorems \ref{thm:LL}
and~\ref{TC}. The conditions for Cases I and II are given in the
third and fourth columns of \Cref{tab:ls} respectively.

In Case I, we have results similar to that of \Cref{thm:LL}. More
precisely, $\vartheta = \theta^{p,q}(\sigma'^* L(\mu'))$ is
nonzero. Its associated variety is
\begin{equation} \label{eqn13}
\AV(\vartheta) = \theta(\overline{\cO'}; G', G) =
\theta(\AV(L(\mu')^*))
\end{equation}
and it is the Zariksi closure of single $K_\bC'$-orbit $\cO$. We fix
a closed point $x \in \cO$. Let $K_x$ and $K_{x'}'$ be the
stabilizers of $x$ and $x'$ in $K_\bC$ and $K_\bC'$ respectively.
Then there is a group homomorphism $\beta\colon K_{x}\to K'_{x'}$ such that
the isotropy representation of $\Gr\vartheta$ at $x$
is
\[
\tchi_{x} = \varsigma|_{\wtK} \otimes(\varsigma|_{\wtK'}\otimes
\sigma'^* \otimes \tchi_{x'})\circ \beta. 
\]
Therefore $\vartheta$ satisfies \eqref{eqnn13}, i.e.
\begin{equation} \label{eqn14}
\AC(\vartheta) = (\dim\tchi_x) [\bcO] = (\dim\tchi_{x'})
[\overline{\theta(\cO')}] =  \theta(\AC(L(\mu')^*)).
\end{equation}
The last column lists the conditions in Case I such that
(c.f. \Cref{thm:KS})
\[
\vartheta|_{\wtK} = \Ind_{\wtK_{x}}^{\wtK_\bC} \tchi_x.
\]

For Case II, the situation is more complicated. Equation \eqref{eqn13} continues to hold but~\eqref{eqn14}
fails in general.

\appendix

\section{Natural filtrations}

\subsection{} \label{SL41} {\it Proof of \Cref{L41}.}  The map $\nu$
in \eqref{eq:9} factors through the $\wtK'$-covariant subspace
$\sY_{\tau'}$ of type $\tau'$ of $\sY$.  Let $\sY(\tau')$,
$\sY^d(\tau')$ and $\sY_j(\tau') = \oplus_{d \leq j} \sY^d(\tau')$
denote the $\tau'$-isotypic components of $\sY$, $\sY^d$ and $\sY_j$
respectively. Since $\wtK'$ has reductive action on $\sY$ and
preserves degrees, $\sY(\tau')$ maps bijectively onto the covariant
$\sY_{\tau'}$.  Moreover $\sY^d(\tau') = \sY^d \cap \sY(\tau')$ and
$\sY_j(\tau') = \sY_j \cap \sY(\tau')$. Hence
\begin{equation} \label{eq:10}
\nu(\sY_j(\tau')) = F_j.
\end{equation}
Let $\calH(\tau')$ denote the $\tau'$-isotypic component in the
harmonic subspace $\calH(\wtK')$ of $\bC[W]$ for~$\wtK'$.  By
\cite{H2}, we have $\calH(\tau') \subset \sY^{j_0}$ and by \cite{H1}
we have
\[
\sY(\tau') =\cU(\fmm^{(2,0)}) \calH(\tau').
\]
Since $\frakm^{(2,0)}$ acts by degree two polynomials, $\sY^j(\tau') =
0$ if $j \not \equiv j_0 \pmod{2}$ and $\sY_{2j+j_0}(\tau') =
\cU_j(\fmm^{(2,0)})\cH(\tau')$.  It follows from \eqref{eq:10} that
$\nu(\cU_j(\frakm^{(2,0)}) \, \calH(\tau')) = F_{2j + j_0} =
F_{2j+j_0+1}$.

 We will prove $V_j= F_{2j+j_0}$ by induction. First we have
  $V_{0} = F_{j_0}$. Suppose $V_{j} = F_{2j+j_0} =
  \nu(Y_j)$ where $Y_j = \sY_{2j + j_0}(\tau')$. Since $V_{j+1} =
  \nu(\cU_{j+1}(\fgg)\sY_{j_0})  \subseteq    \nu(\sY_{2(j+1)+j_0})=
  F_{2(j+1)+j_0}$, it suffices to show that $F_{2(j+1)+j_0} \subseteq
  V_{j+1}$.  By \eqref{eq:8}, $Y_j + \frakm^{(2,0)} Y_j = Y_j +
  \frakp Y_j$.  Hence
\begin{eqnarray*}
F_{2(j+1)+j_0} & = & \nu(\cU_{j + 1}(\frakm^{(2,0)})\calH(\tau')) \subseteq
\nu(Y_j + \frakm^{(2,0)} Y_j) =
\nu(Y_j + \frakp Y_j) = \nu(Y_j) + \frakp
\nu(Y_j) \\ 
& = & V_{j} + \frakp V_{j} = V_{j+1}.
\end{eqnarray*}
This shows that $F_{2(j+1)+j_0} = V_{j+1}$ and completes the proof of the
lemma. \hfill{\qed}


\subsection{} \label{sec:proof.res} {\it Proof of \Cref{lem:res.c}.}
By \Cref{P24} $\theta^{p,q}(L(\mu')) \cong (\theta^{p,
  q+t}(\varsigma')\otimes \mu)^{K^t}$. This defines another filtration $E_j'
:= (E_j \otimes \mu)^{K^t}$ on $\theta^{p,q}(L(\mu'))$. Let
$v_0$ be the degree of the lowest degree $\wtK$-type $V_0$ and $e_0$
be the degree of the lowest degree $\wtK_H$-type $E_0$. Let $j_0$ be
the smallest integer such that $E'_j\neq 0$.  In
order to prove \Cref{lem:res.c}, it suffices to prove that
\begin{equation} \label{eq17a}
V_j= E'_{j+j_0}.
\end{equation}
Indeed, by Lemma \ref{L33} and \eqref{eqn20}, we have a surjection 
\[
\xymatrix{
\eta_j : \displaystyle\sum_{a + b = 2j + e_0} \sY_a \otimes (\sY_2^*)_b \ar@{->>}[r]
& E_j.}
\]
Let $l_0$ be the degree of $\mu'$. Then $L_l := ((\sY_2^*)_{2l+l_0} \otimes
\mu)^{K^t} = ((\sY_2^*)_{2j+l_0+1} \otimes
\mu)^{K^t}$ is the natural filtration on
$L(\mu')^*$. 

Taking the $\mu$-coinvariant of $\eta_j$ gives a surjection
\[
\xymatrix{
\eta_j' : \displaystyle\sum_{a+2l=2j+e_0-l_0} \sY_a\otimes L_l = \displaystyle\sum_{a + b = 2j + e_0} \sY_a \otimes \left((\sY_2^*)_b \otimes
\mu\right)^{K^t} \ar@{->>}[r]& (E_j
\otimes \mu)^{K^t} = E'_j.
}
\]
Hence the image $E_j'$ of $\eta_j'$ is
also the filtration for $\theta^{p,q}(L(\mu'))$ defined before
Lemma \ref{L33} up to a degree shifting. Now \eqref{eq17a} follows from Lemma
\ref{L33}. \qed


\section{Geometry}\label{sec:B}

\subsection{Explicit moment maps} \label{sec:A.mmap} In this section, we will
denote the space of $p$ by $n$ complex matrices (resp. symmetric
matrices) by $M_{p,n}$ (resp. $\Sym^n$). Let $I_{p,n} = (a_{ij})$
denote the matrix in $M_{p,n}$ such that $a_{ij} = \delta_{ij}$. 

We identify
\[
W_H = W^+ \oplus W_H^- = W^+ \oplus W^- \oplus W_2
\]  
with the set of complex matrices
\[
M_{p+q+t,n} = M_{p,n} \oplus M_{q+t,n} = M_{p,n} \oplus M_{q,n} \oplus
M_{t,n}.
\]
We will denote an element in $W_H$ by $w = (w^+;w^-) = (w^+;w_1,w_2)$
and an element in $W = W^+ \oplus W^-$ by $(w^+;w_1)$.  
The projection
map $\pr : W_H \rightarrow W$ is given by $\pr(w^+; w_1,w_2) =
(w^+;w_1)$ and $\psi^- : W^- \rightarrow \fpp'^- = \Sym^n$ is given
by $\psi^-(w_1) = (w_1)^T  w_1 \in \Sym^n$. In particular~$\psi^-$
is surjective if and only if $q \geq n$.

As always we will denote $\rO(p,\bC)$ by $K_\bC^p$.
An element $(o_p,o_q,g)\in K^p_\bC\times K^q_\bC \times K'_\bC$ acts on $W$ by 
$(o_p,o_q,g) \cdot (w^+,w_1) = (o_pw^+g^{-1}, o_qw_1g^T)$.
Let $E_{p,n} = \left( \begin{smallmatrix} I_n\\0 \\iI_n\end{smallmatrix}\right)$ be the $p$
by $n$ matrix with $n$-linearly independent column vectors whose
column space is isotropic.

Let
$P_{p,n}\subset K_\bC^p$ be the stabilizer of the isotropic subspace
spanned by the columns of $E_{p,n}$. It is a maximal parabolic
subgroup of $K_\bC^p$. Then the column space of the complex
conjugation $\overline{E_{p,n}}$
is an isotropic subspace dual to the column space of $E_{p,n}$.  This
gives a Levi decomposition
\begin{equation} \label{eq28}
P_{p,n}\cong (\GL(n,\bC) \times K_\bC^{p-2n})\ltimes N_{p,n}
\end{equation}
with $N_{p,n}$ its  unipotent radical.  Let 
\begin{equation} \label{eq29}
  \beta_{p,n}\colon  P_{p,n}\to \GL(n,\bC)
\end{equation}
be the group homomorphism defined via quotient by $K_\bC^{p-2n}\ltimes
N_{p,n}$.

\subsubsection{Case I: $q \geq n$} \label{SB11} We refer to
\eqref{eq18}. We set $z_0 = (z_0^+; z_0^-) = (E_{p,n}; E_{q+t,n}) \in
W^+ \oplus W_H^-$. Let $(y^+;y^-) = \pr(z_0)$. We also set $x =
\phi(y^+,y^-) = y^+(y^-)^T$. Then~$y^-$ has full rank $n$.

\subsubsection{Case II: $q < n$} \label{SB12}

We will change the basis in $\bC^p$ such that the first
$r$-coordinates are isotropic and dual to the last $r$-coordinates.

Let $z_0 = (z_0^+;z_0^-)$ with $z_0^+ = I_{p,n}$ and 
$z_0^- = \begin{pmatrix}
I_q & 0\\
iI_q & 0\\
0 & E_{q -2q,n-q} \\
\end{pmatrix}$.  Then $x = \phi(\pr(z_0)) = I_{p,q} \in \fpp^* =
M_{p,q}$. Its stabilizer in $K_\bC^{p,q}$ is
\[
K_x = \Set{(o_1,o_2)\in P_{p,q}\times K_\bC^q|\beta_{p,q}(o_1)
  = o_2\in \rO(q,\bC)}
\]
with Levi subgroup
\[
L = \Set{((k_q,k_{p-2q}),k_q) \in K_\bC^{p,q} | k_q\in K_\bC^q, k_{p-2q} \in
  K_\bC^{p-2q}} \cong \triangle K_\bC^q\times K_\bC^{p-2q}.
\]
We recall that $Q = L \times K_\bC^{t} \times K_\bC'$.

Given $(z^+;z^-) \in Z$, i.e. $\phi(\pr(z^+,z^-)) = x = I_{p,q}$. Then
one can show that up to an action of $Q$,
\begin{equation} \label{eq32}
(z^+;z^-) = (\begin{pmatrix}
I_q & \bfzero \\ 
\bfzero & A_s\\ 
\bfzero & \bfzero \end{pmatrix};
 \begin{pmatrix}
I_q &   \bfzero \\
iI_q & \bfzero\\
 \bfzero & B_s\\
\end{pmatrix}) \in \bcNp \times \bcNn
\end{equation}
where $A_s \in M_{p-2q, n-q}$ and $B_s \in  M_{q-2q,n-q}$.

\def\bfM{{\mathbf{M}}}

Let $\bfM = M_{q,n}\times M_{q,n} \times M_{q,t}$. We define an action
of $Q$ on $\bfM$ by $(r_1, r_2, g) (m_1,m_2,m_3) = (r_1 m_1 g^{-1}, r_1
m_2 g^T, r_1 m_3 r_2^T)$ where $r_1 \in K_\bC^q, r_2 \in
K_\bC^t, g \in K_\bC',$ and  $(m_1,m_2,m_3) \in M_{q,n} \times M_{q,n} \times
M_{q,t}$. The subgroup $K_\bC^{p-2q}$ acts trivially. We
define $\pi \colon \bcN \to \bfM$ by
\[
M_{p,n}\times M_{q,n}\ni (
\begin{pmatrix}
A_1\\
*
\end{pmatrix}
,\begin{pmatrix}
B_1 \\ B_2 
\end{pmatrix}
)  \mapsto (A_1, B_1, A_1B_2^T).
\] 
Here $A_1, B_1 \in M_{q,n}, B_2 \in M_{t,n}$. Note that $\pi$
commutes with the action of $Q$.

Let $m = (I_{q,n},I_{q,n},iI_{q,t}).$ Let $\cM = \pi(Z)$ in
$\bfM$. Using \eqref{eq32}, we deduce that $\cM$ is an $Q$-orbit
generated by $m$.  One can also check that
\[
Z_m = \pi^{-1}(m) =  \Set{
(\begin{pmatrix}I_q & \bfzero \\ \bfzero & A_s\\ \bfzero &
  \bfzero  \end{pmatrix};
 \begin{pmatrix}
I_q & \bfzero \\
iI_q & \bfzero\\
 \bfzero & B_s\\
\end{pmatrix})|(A_s,B_s)\in \overline{\cN_s}} \stackrel{T}{\longrightarrow} \overline{\cN_s}
\]
where $T$ is a bijection which maps the above element in the
parathesis to $(A_s,B_s)$ and $\bcN_s$ is the null cone for pair
\[
(G_s,G'_s) = \left( G^{p-2q,t-q}, G'^{2(n-q)} \right) =
\left( \rO(p-2q,t-q), \Sp(2(n-q),\bR) \right).
\]

Finally, the stabilizer of $m$ in $Q$ is
\[
\begin{split}
  Q_m  \cong & \triangle K_{\bC}^q \times K_{\bC}^{p-2q} 
\times K_{\bC}^{t-q} \times K_{\bC}'^{n-q} \\ 
  \cong & \triangle \rO(q,\bC) \times \rO(p-2q,\bC)\times
  \GL(n-q,\bC)\times \rO(t-q,\bC) \\
  \cong & L \times \GL(n-q,\bC) \times \rO(t-q,\bC).
\end{split}
\]
The subgroup $\triangle K_{\bC}^q = \triangle \rO(q,\bC)$ acts
trivially on $\bcNs$. This proves (A), (B), (C) and (E) of \Cref{S58}.

\subsection{} \label{SB1} We refer to the notation in \Cref{S5}. We
recall $\cD = (\phi \circ \pr)^{-1}(\cO)\cap \cN$. 

We recall $z_0 = (z_0^+;z_0^-) =
  (E_{p,n}, E_{q+t,n}) \in \cN$ in Appendices~\ref{SB11} and~\ref{SB12}.
We write $z_0 = (z_0^+; z_1^-,z_2^-)$ and $\cE  = \pr(\cD)$. 
  
\begin{lemma} \label{LB1}
In both Cases I and II, we have the following statements.
\begin{asparaenum}[(i)]
\item 
  The set $\cD$ is a non-empty open dense in $\cN$. 
\item The set $\cD$ is a single $K_\bC^{p,q}\times K^t_\bC\times K'_\bC$-orbit
  generated by $z_0$.
\item The set $\cE$ is dense in $\pr(\bcN)$ and it is a single
  $K_\bC^{p,q}\times K'_\bC$-orbit generated by $\pr(z_0)$.
\end{asparaenum}
 \end{lemma}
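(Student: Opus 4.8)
The plan is to deduce (i) and (iii) from (ii) by soft arguments and to prove (ii) through the explicit matrix descriptions of \Cref{sec:B}.

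For (i): by \eqref{eq19} the composite $\phi\circ\pr$ coincides with $\pr_H\circ\phi_H$, which by \Cref{lem:orbit.O1} carries $\bcN$ onto $\theta(\bcOp;G',G)=\bcO$; since $\cO$ is open in $\bcO$, the set $\cD=(\phi\circ\pr)^{-1}(\cO)\cap\cN$ is open in $\cN$. As $\cN$ is a single $K_{H,\bC}\times K'_\bC$-orbit it is smooth and irreducible, so once $\cD$ is seen to be non-empty (which drops out of (ii)), it is automatically dense. For (iii): $\pr$ is equivariant for $K^{p,q}_\bC\times K'_\bC$ — it kills the summand $W_2$ on which $K^t_\bC$ acts — so once (ii) identifies $\cD$ as an orbit, $\cE=\pr(\cD)$ is the $(K^{p,q}_\bC\times K'_\bC)$-orbit of $\pr(z_0)$; and since $\cD$ is dense in $\cN$ and $\bcN=\overline\cN$, we get $\pr(\bcN)\subseteq\overline{\pr(\cN)}=\overline\cE$, i.e.\ $\cE$ is dense in $\pr(\bcN)$.

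For (ii): the inclusion $(K^{p,q}_\bC\times K^t_\bC\times K'_\bC)\cdot z_0\subseteq\cD$ is immediate from equivariance once $z_0\in\cD$, and the latter holds because $z_0$ (all of whose matrix blocks have full rank $n$, the generic condition on $\bcN$) lies in the open orbit $\cN$, while $\phi(\pr(z_0))$ is the distinguished maximal-rank point $x$ of the relevant case of \Cref{sec:B}, which lies in the open orbit $\cO$. For the reverse inclusion I would use that $\phi\circ\pr\colon\cD\to\cO\cong K_\bC/K_x$ is a surjective $K_\bC$-equivariant morphism, so $\cD=K_\bC\cdot(Z\cap\cN)$ with $Z:=(\phi\circ\pr)^{-1}(x)\cap\bcN$, and it then suffices to show that $Z\cap\cN$ is a single $K_x\times K^t_\bC\times K'_\bC$-orbit generated by $z_0$: since $K_x$ and its Levi lie in $K_\bC$, this yields $\cD=K^{p,q}_\bC\cdot K^t_\bC\cdot K'_\bC\cdot z_0=(K^{p,q}_\bC\times K^t_\bC\times K'_\bC)\cdot z_0$. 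Showing $Z\cap\cN$ is one orbit is the normal-form computation in the coordinates $W_H=M_{p,n}\oplus M_{q,n}\oplus M_{t,n}$: the equations cutting out $Z$ — isotropy of the $M_{p,n}$-block, the Gram-type relations coming from $\psi^+,\psi^-,\psi_2$, and the relation $w^+w_1^T=x$ — pin a triple down up to the actions of $\rO(p,\bC)$ (the parabolic $P_{p,q}$ in Case~II), $\rO(q,\bC)$, $\rO(t,\bC)$ and $\GL(n,\bC)$, the essential tool being Witt's extension theorem. In Case~I ($q\ge n$) surjectivity of $\psi^-$ lets one first normalize the $W^+\oplus W^-$-part and then identify the $W_2$-fibre with $\psi_2^{-1}(x')$ for a fixed $x'$, which is a single $\rO(t,\bC)$-orbit because every $w_2$ with $w_2^Tw_2=x'$ automatically has rank $\rank(x')$; in Case~II ($t>n>q$) one first splits off a diagonal $\rO(q,\bC)$, leaving a fibration of $Z$ over a $Q$-orbit whose fibre is the closed null cone of the stable-range pair $(\rO(p-2q,t-q),\Sp(2(n-q),\bR))$, so that $Z\cap\cN$ fibres over the same base with fibre the open null cone of that pair, which is a single orbit.

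The main obstacle is this normal-form reduction: carefully listing the constraints that define $Z$ and applying Witt's theorem while keeping all four group factors straight, and treating the two cases separately. Everything else — the openness, the density statements, and the passages from $\cD$ to $Z$ and from $\cD$ to $\cE$ — is formal.
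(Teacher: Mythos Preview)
Your overall strategy is sound, and the reduction of (ii) to showing that $Z\cap\cN$ is a single $K_x\times K^t_\bC\times K'_\bC$-orbit via $\cD=K_\bC\cdot(Z\cap\cN)$ is correct. The Case~II argument, recycling the fibration $\pi\colon Z\to\cM$ with fibre $\bcN_s$ from \Cref{SB12} and observing that $Z_m\cap\cN\cong\cN_s$, works; so does the Case~I argument, since in that case $Z\subset\cN$ automatically (both $w^+$ and $w_1$ are forced to have rank $n$ by $\phi(w^+,w_1)=x$), and $Y=\pr(Z)$ is a single $K'_\bC$-orbit by the elementary observation that the column space of any $\ty^+$ with $\ty^+(\ty^-)^T=x$ must equal that of $x$.

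There is, however, a genuine gap in your argument for (i). The open null cone $\cN$ need not be irreducible: the group $K_{H,\bC}\times K'_\bC=\rO(p,\bC)\times\rO(q+t,\bC)\times\GL(n,\bC)$ is disconnected, and at the boundary of the stable range (when $p=2n$ or $q+t=2n$) the stabilizer of $z_0$ lies entirely in the identity component, so $\cN$ has two connected components corresponding to the two families of maximal isotropic subspaces. Being a single orbit of a disconnected group does not imply irreducibility, so ``non-empty and open'' does not give ``dense'' for free. The paper's fix is to note that $K^{p,q}_\bC\times K^t_\bC$ still permutes the components of $\cN$ transitively (an element of $\rO(q,\bC)\times\rO(t,\bC)\subset\rO(q+t,\bC)$ of total determinant $-1$ swaps the two families, and likewise on the $p$ side), so any non-empty open set invariant under this group is dense. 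Your argument for (iii) inherits the same gap through its dependence on (i).

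On (ii) your route differs from the paper's. The paper does not pass to the fibre $Z$: it normalizes $z^+$ to $z_0^+$ using $K^p_\bC$ alone, then characterizes $\cD$ directly by the rank conditions $\rank(w_1)=\min(q,n)$ and $\rank(w_1^Tw_1)=\min(n,t)$ (its equation~\eqref{eq:cD}), after which one application of Witt's theorem inside $\bC^{q+t}$ handles both cases uniformly. Your fibre-over-$\cO$ approach is more structural and reuses machinery already needed for the isotropy calculations; the paper's is shorter and avoids the case split.
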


\begin{proof}[Sketch of the proof]
\begin{asparaenum}[(i)]
\item Clearly $z_0 \in \cD$ so $\cD$ is non-empty and open in~$\cN$.
  If $\cN$ is irreducible, then $\cD$ is open dense in $\cN$.  If
  $\cN$ is not irreducible, then $\cD$ is open dense because
  $K^{p,q}_\bC\times K^{t}_\bC$ permutes the irreducible components.

\item Let $w = (z^+;z^-) \in \cD$. By the action of $K_\bC^p$, we may
  assume that $z^+ = z_0^+$. Now~(ii) would follow from the claim that
  $z^-$ is in the $K_\bC^q\times K_\bC^t\times K'_\bC$-orbit of
  $z_0^-$. Indeed it is a case by case elementary computation to show
  that
\begin{equation}\label{eq:cD}
\begin{split}
\cD = & \Set{
\begin{array}{l}
w = (w^+;w_1,w_2)\\
\in M_{p,n} \times M_{q,n} \times M_{t,n}
\end{array}
|
\begin{array}{l} w \in \cN, \rank(w_1) = \min(q,n) \\
\rank(w_1^Tw_1) =\min(n,t).
\end{array}}. 
\end{split}
\end{equation}
Our claim is an application of the Witt's theorem to \eqref{eq:cD}. We
will leave the details to the reader.

\item This follows from (i), (ii) and the definition $\cE = \pr(\cD)$.
\end{asparaenum}
\end{proof}


\def\ty{{\tilde{y}}}
\subsection{} \label{SB7}
{\it Proof of \Cref{lem:geo.I}.}
\begin{inparaenum}[(i)]
\item Without loss of generality, we set $y = \pr(z_0) = (y^+;y^-)$
  and $x = \phi(y) \in \cO$. We claim that $Y = \phi^{-1}(x) \cap \pr(\bcN)
  \subset \cE$.  Let $\ty = (\ty^+,\ty^-)\in Y$.  Since $x$ has rank
  $n$, $y^+$ and $\ty^+$ has rank $n$.

  Note that the column spaces of $x,y^+,\ty^+$ are the same.  Hence
  there is $k'\in \GL(n,\bC) = K'_\bC$ such that $\ty^+ =y^+k'$,
  i.e. $\ty^+\in \cN^+$.  Now $y^+(\ty^-k'^T)^T = \ty^+(\ty^-)^T=x =
  y^+ y^-$. Viewing $y^+$ as an injection, we get $y^- =
  \ty^-k'^T$. Therefore $(\ty^+,\ty^-)\in \cE$ by \eqref{eq:cD} which
  proves the claim.
  
  By \Cref{LB1}~(iii), $\cE$ is a single
  $K_\bC\times K'_\bC$-orbit so $\ty = (k,k') \cdot y \in Y$ for some $k \in
  K_\bC$ and $k' \in K_\bC'$.  We have $x = \phi((k,k')\cdot y) = k
  \cdot \phi(\ty)
  = k\cdot x$ so $k \in K_x$. Hence $Y$ is a single orbit of
  $K_{ x} \times K_\bC'$ and this proves (i).

\item 
 We claim that for every $k = (k^p,k^q)\in K_{x}
\subset K_\bC^{p,q}$, there is a unique $k' \in K'_\bC$, denoted by
$\beta(k)$, such that $(k,k')\in S_y$. It is easy to see that $k
\mapsto \beta(k)$ is a group homomorphism $\beta\colon K_{x}\to
K'_\bC$. The image of $\beta$ is contained in $K'_{x'}$ since
$\beta(k)$ stabilizes $\psi(y) = x'$.

  We recall~\eqref{eq28} that $P_{p,n}$ is the parabolic subgroup in
  $K_\bC^p$ which stabilizes column space of $y^+$. We also
  recall~\eqref{eq29} the projection $\beta_{p,n} \colon P_{p,n} \to
  \GL(n,\bC) = K'_\bC$. It satisfies $k^p\cdot y^+ =
  y^+\beta_{p,n}(k^p) = \beta_{p,n}(k^p)^{-1}\cdot y^+$.

  
We now prove our claim.   We define $\beta(k)
= \beta_{p,n}(k^p)$. Then $(k,\beta(k))$ stabilizes $y^+$. Now 
\[y^+
(y^-)^T  = x =
\phi(y) = \phi((k, \beta(k))\cdot y) = \phi(y^+, (k^q,\beta(k))\cdot y^-)= 
y^+ (k^q y^- \beta(k)^{T})^T
\] as matrices. Since $y^+$ is an injective linear transformation, we
have $k^q y^- \beta(k)^{T} = y^-$, i.e. $(k,\beta(k))\in S_y$.

Next we prove the uniqueness of $k'$ in our claim. Indeed if $(k,a),
(k,b) \in S_y$ where $a, b \in K_\bC' = \GL(n,\bC)$. Then
$k^py^+(k^qy^-a^{T})^T = k^py^+(k^qy^- b^{T})^T$ as matrices
so $y^- a^{T}= y^- b^{T}$. Since $y^-$ has rank $n$, $a = b$. This
proves our claim and (ii).

\smallskip

\noindent (iii) An element of $Z_y$ is of the form $(y^+;y^-,y_2)$.
Since $(y^-,y_2)$ is in the null cone $\bcNn$, $\psi_2(y_2) =
-\psi^-(y^-) = x'$, i.e. $y_2\in \psi_2^{-1}(x')$.  On the other hand,
for any $y_2\in \psi_2^{-1}(x')$, $(y^-,y_2)\in \cN^-$ since $y^-$
already has full rank. We define $T : Z_y \rightarrow Z_{x'}$ by
$(y^+;y^-,y_2)\mapsto y_2$. This is a bijection which satisfies
(iii). \qed
\end{inparaenum}

\subsection{} \label{SB5} {\it Proof of \Cref{L62}.}  The proof of the
lemma involves some elementary but tedious case by case
consideration. The case $n>t$ and $n>q$ are symmetric, so we only
sketch the proof for $q>n>t$.

By \Cref{LB1}(ii), $\cD$ is a single $K_\bC^{p,q} \times K'_\bC \times
K^t_\bC$-orbit. Let $\cD^- = \{ (w_1,w_2) | (w^+;w_1,w_2) \in \cD \}$.
Then $\cD^-$ is the $K_\bC^q \times K_\bC^{t}\times K_\bC'$-orbit of
$z_0^- = E_{q+t,n}$ and $\cD =\cN^+ \times \cD^-$.

\smallskip

Let $c$ denote the co-dimension of $\cN - \cD$ in $\cN$. It is also
the codimension of $\cN^- - \cD^-$ in $\cN^-$. We need to show that $c
\geq 2$. If $q +t > 2n$ so there exists a row of zeros in~$E_{q+t,n}$
and we set $E^*$ to be the $q+t$ by $n$ matrix obtained from the
matrix $E_{q+t,n}$ by interchanging a zero row and with the last
row. If $q+t =2n$, we interchange the $n$-th row with the $(q+t-1)$-th
row. In both cases, let $\cN^*$ denote the $K^q_\bC \times K^t_\bC
\times K_\bC'$-orbit generated by~$E^*$.

We observe that $\cN^*$ is open dense in $\cN^--\cD^-$.




The codimension $c = \dim \cN^- - \dim \cN^* = \dim S_0^* - \dim S_0$
where $S_0$ and $S_0^*$ are the stabilizers of $E_{q+t,n}$ and $E^*$
respectively in $K^q_\bC\times K^t_\bC\times K_\bC'$. One may compute
that
\[
\dim S_0 = \frac{1}{2} \left(n^2-n + q^2-q + t^2 - t \right) + n(n - q
- t + 1).
\]
Similarly $\dim S_0^*$ has the same formula as $\dim S_0$ above except
that we have to reduce $t$ by~$1$.  With these, we
compute that $\dim S_0^* - \dim S_0 = 1 + n - t \geq 2$ as
required. \qed

\end{document}